\documentclass[11pt,a4paper]{article}
\usepackage[T1]{fontenc}
\usepackage{lmodern}
\usepackage{authblk}
\usepackage[a4paper,margin=1in]{geometry}
\usepackage{amsthm,amsmath,amsfonts,amssymb,mathtools}
\usepackage{microtype}
\usepackage[numbers,sort&compress]{natbib}
\usepackage{xurl}
\usepackage[colorlinks=true,linkcolor=blue,citecolor=blue,urlcolor=blue]{hyperref}
\hypersetup{pdftitle={Fluctuations of subgraph counts in exponential random graphs},pdfauthor={Hui-Lin Liu, Song-Hao Liu and Dawei Lu},pdfsubject={Probability theory}}
\allowdisplaybreaks[1]
\theoremstyle{plain}
\newtheorem{theorem}{Theorem}[section]
\newtheorem{lemma}[theorem]{Lemma}

\newtheorem{corollary}{Corollary}[section]
\theoremstyle{definition}
\newtheorem{definition}{Definition}[section]
\newtheorem{remark}{Remark}[section]
\newtheorem*{Notation}{Notation}
\newtheorem*{Organization}{Organization}
\numberwithin{equation}{section}
\DeclareMathOperator{\Var}{Var}
\DeclareMathOperator{\Cov}{Cov}
\DeclareMathOperator{\Lip}{Lip}
\DeclareMathOperator{\sgn}{sgn}
\DeclareMathOperator{\Hom}{Hom}
\newcommand{\E}{\mathbb E}
\newcommand{\Pp}{\mathbb P}
\newcommand{\B}{\mathcal B}
\newcommand{\cH}{\mathbf H}
\newcommand{\cI}{\mathcal I}
\newcommand{\ind}{\mathbf1}
\newcommand{\Phibar}{\overline\Phi}
\newcommand{\dKol}{d_{\mathrm{Kol}}}
\newcommand{\dWass}{d_{\mathrm{Wass}}}
\newcommand{\bbeta}{\boldsymbol{\beta}}
\newcommand{\btheta}{\boldsymbol{\theta}}
\newcommand{\bzeta}{\boldsymbol{\zeta}}
\newcommand{\bh}{\boldsymbol{h}}
\newcommand{\bxi}{\boldsymbol{\xi}}
\newcommand{\bA}{\boldsymbol A}
\newcommand{\ba}{\boldsymbol a}
\newcommand{\bu}{\boldsymbol u}
\newcommand{\proofstep}[1]{%
\par\addvspace{0.3\baselineskip}%
\noindent\textit{Step #1.}\quad\ignorespaces}

\title{Fluctuations of subgraph counts in exponential random graphs}
\author{Hui-Lin Liu\thanks{E-mail: \href{mailto:liuhuilin2@mail.dlut.edu.cn}{\texttt{liuhuilin2@mail.dlut.edu.cn}}.}}
\author{Song-Hao Liu\thanks{E-mail: \href{mailto:liusonghao@dlut.edu.cn}{\texttt{liusonghao@dlut.edu.cn}}.}}
\author{Dawei Lu\thanks{E-mail: \href{mailto:ludawei_dlut@163.com}{\texttt{ludawei\_dlut@163.com}}.}}
\affil{School of Mathematical Sciences, Dalian University of Technology, Dalian, China}

\date{}
\begin{document}
\maketitle
\begin{abstract}
In this paper, we study fluctuations of fixed subgraph counts in dense exponential random graphs on $n$ vertices. We compare each normalized subgraph count $V$ with the normalized edge count $W$: their difference $V-W$ is $O(n^{-1/2})$ in $L^2$, whereas $\mathbb{E}[(V-W)f(W)]=O(n^{-1})$ for every $1$-Lipschitz function $f$. We also obtain $O(n^{-1})$ Kolmogorov and Wasserstein bounds, optimal for the edge count, with relative variance error of the same order. These estimates hold uniformly on compact subsets of the ferromagnetic subcritical and absolute-coefficient Dobrushin regions, and under small fixed signed perturbations of the former. Using this uniform control, we derive a local Gaussian likelihood expansion with rank-one leading Fisher information and two-sided moderate deviations with an explicit cubic correction. The proofs combine two applications of the normal Stein equation, covariance estimates from Glauber dynamics and exponential tilting.
\end{abstract}
\noindent\textbf{Keywords.} Exponential random graphs, normal approximation, Kolmogorov distance, Wasserstein distance, Cram\'er-type moderate deviations, Stein's method, Fisher information.

\smallskip
\noindent\textbf{2020 Mathematics Subject Classification.} Primary 60F05, 05C80; secondary 60F10, 62F12.

\section{Introduction}\label{sec:main}

Exponential random graph models (ERGMs) extend the Erd\H{o}s--R\'enyi model by allowing dependence between edges. In the undirected formulation of Frank and Strauss \cite{FS}, this dependence is described by graph statistics such as star and triangle counts. In this setting, we study how the fluctuations of fixed subgraph counts depend on the edge count, both at the leading scale and after its contribution is subtracted. The resulting estimates also apply to normal approximation, relative tail probabilities and local likelihood ratios.

Let $n\ge2$, let $[n]\coloneqq\{1,\ldots,n\}$ be the vertex set, and let $\cI$ be the unordered pairs of distinct vertices in $[n]$. A configuration $y\in\{0,1\}^{\cI}$ encodes a graph $G_y$, with $y_s=1$ when the vertices in $s$ are joined by an edge, and $y_s=0$ otherwise. Fix a family $\cH=(H_1,\ldots,H_k)$ of simple graphs (undirected graphs without self-loops or multiple edges). Let $v_j=|\mathcal V(H_j)|$ denote the number of vertices of $H_j$. The Hamiltonian $T_{\bbeta}$ is defined by
\begingroup
\thinmuskip=1mu\medmuskip=1mu\thickmuskip=2mu
\begin{equation}
T_{\bbeta}(y)=\sum_{j=1}^k\beta_j n^{2-v_j}|\Hom(H_j,G_y)|,\hspace{0.6em}
p_{\bbeta}(y):=\Pp(Y_{\bbeta}=y)=\frac{e^{T_{\bbeta}(y)}}{Z(\bbeta)},\hspace{0.6em}
Z(\bbeta)=\sum_{y\in\{0,1\}^{\cI}}e^{T_{\bbeta}(y)},
\label{eq:model}
\end{equation}
\endgroup
where $|\Hom(H_j,G_y)|$ counts injective, noninduced embeddings and $Z(\bbeta)$ is the partition function normalizing the associated Gibbs distribution. The factors $n^{2-v_j}$ put the subgraph terms on a common $n^2$ scale, matching the number of potential edges. We take $H_1=K_2$, the graph with two vertices joined by one edge. The remaining energy graphs $H_j$, $j\ge2$, have no isolated vertices and at least two edges. We call the model ferromagnetic when $\beta_j>0$ for $j\ge2$, while the edge parameter $\beta_1$ may have either sign.

Previous work on dense ERGMs addresses free energy, dynamics and fluctuations. The large deviation principle of Chatterjee and Varadhan \cite[Theorem 2.3]{CV} leads to the free-energy variational formula of Chatterjee and Diaconis \cite[Theorems 3.1--3.2]{CD}. For the subcritical dynamics, Bhamidi, Bresler and Sly \cite{BBS} proved rapid mixing, and Ganguly and Nam \cite{GN} established concentration and Poincar\'e inequalities. Signed interactions under Dobrushin conditions were treated by Reinert and Ross \cite{RR} and Sambale and Sinulis \cite{SS}, through graph-level approximation and functional inequalities, respectively.

For normal approximation, Fang, Liu, Shao and Zhao \cite{FLSZ} obtained Kolmogorov and Wasserstein bounds for edges and Wasserstein bounds for fixed subgraphs throughout the ferromagnetic subcritical region. Winstein \cite{WQ} improved these rates and treated suitable phase-restricted distributions. Further phase-restricted results include concentration via metastable mixing by Winstein \cite{WMix} and central limit theorems from approximate FKG inequalities by Mukherjee and Winstein \cite{MW}. For specific energy families, Mukherjee and Xu \cite{MX} and Bianchi, Collet and Magnanini \cite{BCM} proved limit theorems. Magnanini and Passuello \cite{MP} proved a triangle central limit theorem for a rounded edge--triangle energy.

These developments motivate the present work in three respects. First, the approximation of a fixed subgraph count by the edge count, used by Sambale and Sinulis \cite{SS}, Fang, Liu, Shao and Zhao \cite{FLSZ}, and Winstein \cite{WQ}, leaves a difference whose size alone does not determine its effect on distributional approximation. We therefore study how this difference interacts with functions of the edge count, alongside sharper normal approximation and variance estimates. Second, quantitative fluctuation results for negative higher-order coefficients remain comparatively limited. Fang, Liu, Shao and Zhao \cite[Remark 3.1(2)]{FLSZ} discuss the role of positivity in their normal approximation argument, while Reinert and Ross \cite{RR} and Sambale and Sinulis \cite{SS} treat signed interactions through graph-level approximation and functional inequalities. This motivates us to study quantitative subgraph fluctuations when the higher-order coefficients are not all positive. Third, relative tail probabilities and local likelihood ratios require control of the model at nearby parameters. In particular, a lower-tail tilt introduces a negative coefficient when the observed subgraph is absent from the energy. Estimates that remain uniform under these parameter changes are therefore needed both for two-sided moderate deviations and for local likelihood expansions.

To specify the two parameter regions considered in this paper, let $e_j=|\mathcal E(H_j)|$ denote the number of edges in $H_j$ and define
\begin{equation}
\Phi_{\bbeta}(u)=\sum_{j=1}^k\beta_j e_j u^{e_j-1},\qquad
\Psi(z)=\frac{e^z}{1+e^z},\qquad
\varphi_{\bbeta}(u)=\Psi(2\Phi_{\bbeta}(u)).
\label{eq:meanfield}
\end{equation}
Here $\Psi$ is the logistic function, and $\varphi_{\bbeta}$ is the mean-field map for the edge density. The equation $u=\varphi_{\bbeta}(u)$ expresses consistency between the assumed homogeneous density and the mean-field conditional edge probability.
\begin{definition}\label{def:parameter-regions}
For $\bbeta=(\beta_1,\ldots,\beta_k)\in\mathbb R^k$, we distinguish the following parameter regions.
\begin{enumerate}
\item[(i)] The \emph{ferromagnetic subcritical region} $\mathcal S_+$ consists of the parameters $\bbeta\in\mathbb R\times(0,\infty)^{k-1}$ for which $u=\varphi_{\bbeta}(u)$ has a unique solution $p_{\bbeta}\in(0,1)$ and $\rho_{\bbeta}:=1-\varphi_{\bbeta}'(p_{\bbeta})>0$.
\item[(ii)] The \emph{absolute-coefficient Dobrushin region} is
\begin{equation}
D(\bbeta)=\frac12\sum_{j=2}^k|\beta_j|e_j(e_j-1),\qquad
\mathcal D=\{\bbeta\in\mathbb R^k:D(\bbeta)<1\}.
\label{eq:dobrushin}
\end{equation}
Higher-order coefficients in $\mathcal D$ may have either sign.
\end{enumerate}
\end{definition}
Throughout, a \emph{unique stable fixed point} means that $u=\varphi_{\bbeta}(u)$ has exactly one solution in $[0,1]$, denoted by $p_{\bbeta}\in(0,1)$, and that $\rho_{\bbeta}=1-\varphi_{\bbeta}'(p_{\bbeta})>0$. On $\mathcal S_+$, the derivative satisfies $0\le\varphi_{\bbeta}'(p_{\bbeta})<1$, so the unique fixed point is locally attracting. This fixed-point criterion defines the usual ferromagnetic high-temperature regime. By \eqref{eq:meanfield} and $\Psi'\le1/4$, we have $\sup_{0\le u\le1}|\varphi_{\bbeta}'(u)|\le D(\bbeta)$. Thus every $\bbeta\in\mathcal D$ has a unique fixed point $p_{\bbeta}$, with $\rho_{\bbeta}=1-\varphi_{\bbeta}'(p_{\bbeta})\ge1-D(\bbeta)>0$. The positive-coefficient part of $\mathcal D$ is therefore contained in $\mathcal S_+$. We use $p_{\bbeta}$ for the scalar fixed point and $p_{\bbeta}(y)$ for the configuration probability in \eqref{eq:model}.

With these parameter regions specified, we now summarize our main results. We first compare each normalized subgraph count $V$ with the normalized edge count $W$, uniformly on compact subsets of the two regions in Definition~\ref{def:parameter-regions}. Their difference $V-W$ is $O(n^{-1/2})$ in $L^2$, whereas $\mathbb{E}[(V-W)f(W)]=O(n^{-1})$ for every $1$-Lipschitz function $f$ (Theorem~\ref{thm:residual}). The second estimate retains information that the $L^2$ bound alone does not provide. Consequently, in every joint subsequential limit of $W$ and $\sqrt n(V-W)$, the limiting rescaled difference has conditional mean zero given the limiting edge fluctuation (Corollary~\ref{cor:conditional-centering}).

For normal approximation, we obtain $O(n^{-1})$ Kolmogorov and Wasserstein bounds, with relative variance error of the same order (Theorem~\ref{thm:normal}). Both distance rates are optimal for the edge count, as shown by the matching lower bounds in \eqref{stein:lattice-lower}. We also show that the unique stable fixed point and these estimates persist under small fixed signed perturbations of compact ferromagnetic subcritical families (Theorem~\ref{thm:signed}). This allows interactions of either sign without requiring the perturbed models to satisfy the Dobrushin condition.

We then use the parameter-uniform estimates to derive a local Gaussian likelihood expansion with rank-one leading Fisher information (Theorem~\ref{thm:likelihood}) and two-sided moderate deviations with an explicit cubic correction (Theorem~\ref{thm:md}). These applications use the fluctuation estimates at nearby parameters, with exponential tilting providing the connection to relative tail probabilities. All counts are centered at their true finite-sample means.

\begin{Notation}
For a random variable $U$, let $\|U\|_q=(\E|U|^q)^{1/q}$. For a function $F$, the supremum in $\|F\|_\infty$ is taken over its full domain. The letters $C,c$ denote positive constants that may change from line to line. They depend only on the fixed compact parameter set, fixed graph families and observed graphs, and any explicitly indicated $L$ or moment order. They do not depend on the particular parameter, $n$, the host edge or the threshold. We write $A\asymp B$ if $cB\le A\le CB$. The notation $O(r)$ denotes a deterministic remainder bounded in absolute value by $Cr$, with no prescribed sign.
\end{Notation}

\begin{Organization}
The rest of the paper is organized as follows. Section~\ref{sec:results} specifies the normalization and states the main results. Section~\ref{sec:stein} proves the comparison of subgraph counts with the edge count (Theorem~\ref{thm:residual}), the normal approximation (Theorem~\ref{thm:normal}) and stability under signed perturbations (Theorem~\ref{thm:signed}), then derives their joint consequences. Section~\ref{sec:md} proves the local Gaussian likelihood expansion (Theorem~\ref{thm:likelihood}) and the two-sided moderate deviation expansion (Theorem~\ref{thm:md}). The lemmas in Appendix~\ref{sec:ferro} provide variance, covariance and moment estimates used to prove the auxiliary lemmas in Appendix~\ref{sec:auxiliary}. These auxiliary lemmas are stated in Sections~\ref{sec:stein} and \ref{sec:md}.
\end{Organization}

\section{Main results}\label{sec:results}

To state the results, we specify the subgraph counts and their normalization. Write
\begin{equation}
\cI=\{\{u,v\}:1\le u<v\le n\},\qquad N=|\cI|=\binom n2.
\label{eq:edges}
\end{equation}
The edge count is $E(y)=\sum_{s\in\cI}y_s$. We sometimes order $\cI$ and index its coordinates by $1,\ldots,N$. For a fixed simple graph $H$, let $v_H=|\mathcal V(H)|$ and $e_H=|\mathcal E(H)|$ denote its numbers of vertices and edges, respectively, so $v_j=v_{H_j}$ and $e_j=e_{H_j}$. Our counting convention is
\begin{equation}
|\Hom(H,G_y)|
=\sum_{\iota:\mathcal V(H)\hookrightarrow[n]}
\prod_{b\in\mathcal E(H)}y_{\iota(b)}.
\label{eq:count}
\end{equation}
The sum runs over injective maps from $\mathcal V(H)$ to $[n]$. For an edge $b=\{u,v\}\in\mathcal E(H)$, we write $\iota(b)=\{\iota(u),\iota(v)\}$ for its image in $\cI$. There is no restriction on the images of nonedges. In particular, $|\Hom(K_2,G_y)|=2E(y)$. The count is zero when no injection exists, and an empty product is one. Each observed graph $H$ is fixed and has at least one edge; it need not occur in the Hamiltonian, and it may be disconnected or have isolated vertices.

For the edge count, vertex-permutation invariance implies that $\widetilde p_{\bbeta}=\E Y_{s,\bbeta}$ does not depend on $s$. Let $E_{\bbeta}=E(Y_{\bbeta})$ and set
\begin{equation}
\mu_{\bbeta}=\E E_{\bbeta}=N\widetilde p_{\bbeta},\quad
v_{\bbeta}=\frac{p_{\bbeta}(1-p_{\bbeta})}{\rho_{\bbeta}},\quad
\sigma_{\bbeta}^2=Nv_{\bbeta},\quad
W_{\bbeta}=\frac{E_{\bbeta}-\mu_{\bbeta}}{\sigma_{\bbeta}}.
\label{eq:edge-scale}
\end{equation}
Here $v_{\bbeta}$ is the variance density, not the vertex count $v_H$. Compared with the Bernoulli variance $p_{\bbeta}(1-p_{\bbeta})$, the factor $1/\rho_{\bbeta}$ accounts for dependence in the leading variance density. For any fixed observed graph $H$, define
\begin{equation}
\begin{aligned}
\mu_{H,\bbeta}&=\E|\Hom(H,G_{Y_{\bbeta}})|,&
\sigma_{H,\bbeta}&=2e_Hp_{\bbeta}^{e_H-1}n^{v_H-2}\sigma_{\bbeta},\\
W_{H,\bbeta}&=\frac{|\Hom(H,G_{Y_{\bbeta}})|-\mu_{H,\bbeta}}{\sigma_{H,\bbeta}},&
\widehat\sigma_{H,\bbeta}^2&=\Var(|\Hom(H,G_{Y_{\bbeta}})|).
\end{aligned}\label{eq:subgraph-scale}
\end{equation}
We choose the scale in \eqref{eq:subgraph-scale} so that the leading edge-count contribution becomes $W_{\bbeta}$, as quantified in Theorem~\ref{thm:residual} below. When the true variance is positive, let $\widehat W_{H,\bbeta}=(|\Hom(H,G_{Y_{\bbeta}})|-\mu_{H,\bbeta})/\widehat\sigma_{H,\bbeta}$. The counts are thus centered at their true finite-sample means, while the unhatted scales are explicit asymptotic scales. In particular, $W_{K_2,\bbeta}=W_{\bbeta}$ exactly. We indicate the underlying parameter through the random-variable subscript, suppressing it on expectations.

We first compare the normalized subgraph count with the normalized edge count on the same probability space. Let
\begin{equation}\label{eq:residual-definition}
D_{H,\bbeta}=W_{H,\bbeta}-W_{\bbeta}.
\end{equation}
The following result controls both the size of this difference and its expected product with a function of the edge count.

\begin{theorem}\label{thm:residual}
Let $\B$ be an arbitrary fixed nonempty compact subset of $\mathcal S_+$ or of $\mathcal D$, and let $H$ be a fixed graph with $e_H\ge1$. For the difference in \eqref{eq:residual-definition}, there is a constant $C=C_{\B,\cH,H}$ such that, for all $n\ge2$ and $\bbeta\in\B$,
\begin{equation}\label{eq:residual-bounds}
\E D_{H,\bbeta}^{\,2}\le\frac Cn,\qquad
\sup_{\Lip(f)\le1}
\left|\E\bigl[D_{H,\bbeta}f(W_{\bbeta})\bigr]\right|\le\frac Cn.
\end{equation}
In particular, we have
\begin{equation}\label{eq:residual-covariance}
\left|\Cov(D_{H,\bbeta},W_{\bbeta})\right|\le\frac Cn.
\end{equation}
For every continuously differentiable $\psi:\mathbb R\to\mathbb R$ with $\Lip(\psi')\le1$, we also have
\begin{equation}\label{eq:residual-smooth}
\left|\E\psi(W_{H,\bbeta})-\E\psi(W_{\bbeta})\right|\le\frac Cn.
\end{equation}
For $n\ge v_H$, all the bounds remain valid with $W_{H,\bbeta}$ and $W_{\bbeta}$ replaced by $\widehat W_{H,\bbeta}$ and $\widehat W_{K_2,\bbeta}$, and with their difference in place of $D_{H,\bbeta}$.
\end{theorem}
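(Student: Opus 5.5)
The approach is a Hoeffding-type linearization of the subgraph count around the edge density, followed by a Stein-type identity for the residual tested against functions of $W_{\bbeta}$.

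\emph{Decomposition.} Write $Y_s=\widetilde p_{\bbeta}+\bar Y_s$. Expand $|\Hom(H,G_Y)|$ in \eqref{eq:count} as a sum over subsets $A\subseteq\mathcal E(H)$ of $\widetilde p_{\bbeta}^{\,e_H-|A|}\prod_{b\in A}\bar Y_{\iota(b)}$. The $|A|=1$ term equals, up to a deterministic $O(n^{v_H-3})$ correction, $e_H\widetilde p_{\bbeta}^{\,e_H-1}\,2n^{v_H-2}(E-\mu_{\bbeta})$. Two auxiliary facts are needed here.

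1. $|\widetilde p_{\bbeta}-p_{\bbeta}|=O(n^{-1})$, which I would derive from Glauber/mean-field concentration as in Appendix~\ref{sec:ferro}.
2. The variance bound $\Var(E)=O(n^2)$.

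With these, the linear term divided by $\sigma_{H,\bbeta}$ equals $W_{\bbeta}(1+O(n^{-1}))$. The remaining terms, with $|A|\ge2$, are degenerate centered polynomials. Group them by the vertex overlap of the edge images.

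\emph{Terms that share a vertex.} These reduce to sums like $\sum_v(\deg_v-\E\deg_v)^2$ and its analogues. Normalized by $n^{v_H-2}\sigma_{\bbeta}\asymp n^{v_H-1}$, they have $L^2$ norm $O(n^{-1/2})$, because each local centered degree fluctuation is $O(\sqrt n)$ with exponential moments.

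\emph{Vertex-disjoint terms.} These behave like $(E-\mu)^2/N$ and are $O(n^{-1})$ after normalization.

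\emph{Required inputs.} The decomposition needs covariance decay: for products of centered edge variables, joint cumulants must decay like $n^{-(\text{number of extra vertices})}$. I would take these from the variance/covariance lemmas in Appendix~\ref{sec:ferro}, which come from Glauber-dynamics coupling on $\mathcal S_+$ and from the Dobrushin contraction on $\mathcal D$. Both give uniformity over compact $\B$. Together they yield $\E D^2\le C/n$.

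\emph{The tested bound (main obstacle).} Here the dominant residual $R=n^{-1}\sum_v\bigl(\bar d_v^{\,2}-\E\bar d_v^{\,2}\bigr)$ is only $O(n^{-1/2})$, so bounding $\E[Rf(W)]$ requires decorrelation. My plan is as follows.

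1. Write $f(W)-f(W^{(v)})$, where $W^{(v)}$ removes the edges at vertex $v$, or resamples them via the Glauber coupling. This difference has size $O(|\bar d_v|/n)$.
2. Estimate $\E[(\bar d_v^{\,2}-\E\bar d_v^{\,2})f(W^{(v)})]$ by showing that the conditional law of $\bar d_v$ given the rest depends on $W^{(v)}$ only through a Lipschitz function with $O(n^{-1/2})$ sensitivity. Conditionally, $\bar d_v$ is a sum of nearly independent Bernoullis with mean $\varphi_{\bbeta}$ evaluated at local densities.
3. Combining these gives $O(n^{-1}\cdot n\cdot n^{-1})=O(n^{-1})$ per normalized term.

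Equivalently, one could use the exchangeable-pair identity from one Glauber step. Both sides of $\E[(F(Y')-F(Y))(g(W')-g(W))]$ are expanded, and the leading drift of $R$ is shown to be $-cR+O(n^{-1})$ plus a function of $W$ with the correct centering; Stein's equation is not needed. If the decoupling in steps 1--3 is too lossy, I would fall back on this exchangeable-pair route.

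\emph{Remaining claims.}
\begin{itemize}
\item \eqref{eq:residual-covariance} follows by taking $f(x)=x$.
\item For \eqref{eq:residual-smooth}, Taylor expansion gives $\psi(W+D)-\psi(W)=\psi'(W)D+O(D^2)$. Apply the tested bound to $f=\psi'$, which is $1$-Lipschitz after subtracting $\psi'(0)$; the constant part contributes $\psi'(0)\E D=0$ since both variables are centered. Then use $\E D^2\le C/n$.
\item For the hatted versions, note that $\widehat\sigma_{H,\bbeta}/\sigma_{H,\bbeta}=1+O(n^{-1})$. This holds because $\E W_H^2=\E W^2+2\Cov(D,W)+\E D^2$ and $\E W^2=1+O(n^{-1})$, the latter being the variance estimate from the appendix. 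Rescaling therefore changes each variable by $O(n^{-1})|W|$, which preserves all the bounds.
\end{itemize}
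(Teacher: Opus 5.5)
The genuine gap is the tested bound $\sup_{\Lip(f)\le1}|\E[D_{H,\bbeta}f(W_{\bbeta})]|\le C/n$, which is the real content of the theorem.

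Your $L^2$ part can be repaired. The appendix gives no joint-cumulant decay of the kind you invoke, but the Poincar\'e inequality \eqref{eq:poincare}, applied to $f_H-\sum_ic_i(Y_i-\widetilde p)$ with $\Var(\Delta_if_H(Y))=O(n^{-3})$, gives $\E D_{H,\bbeta}^2\le C/n$ directly, as in the paper.

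Why your plan for the tested bound falls short:
\begin{itemize}
\item \emph{The per-vertex count in your step 3 does not follow.} With $\sigma_{H,\bbeta}\asymp n^{v_H-1}$ and $H$ having two adjacent edges, the degree part of $D_{H,\bbeta}$ is, up to constants and smaller terms, $n^{-2}\sum_v(\bar d_v^{\,2}-\E\bar d_v^{\,2})$. So you need $\E[(\bar d_v^{\,2}-\E\bar d_v^{\,2})f(W)]=O(1)$ per vertex, although $\bar d_v^{\,2}-\E\bar d_v^{\,2}$ has size $n$. Your step 1 bound $|f(W)-f(W^{(v)})|\lesssim|\bar d_v|/n$ only gives $\E[|\bar d_v^{\,2}-\E\bar d_v^{\,2}|\,|\bar d_v|]/n$, of order $n^{1/2}$ per vertex. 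The total is then of order $n^{-1/2}$, no better than Cauchy--Schwarz.
\item \emph{Recovering the missing factor needs smoothness you do not have.} You would need a first-order cancellation of $f(W)-f(W^{(v)})$ against conditional third moments of $\bar d_v$. For merely Lipschitz $f$ this is not available by Taylor expansion. Averaging instead over the law of $W^{(v)}$ at scale $n^{-1/2}$ requires a normal approximation for $W^{(v)}$ of essentially the precision being proved.
\item \emph{Step 2 is asserted, not proved.} The conditional law of $\bar d_v$ depends on the whole configuration off $v$ through the fields $q_{vw}$, not only on $W^{(v)}$. Decorrelating that dependence from $f(W^{(v)})$ is precisely the difficulty.
\item \emph{Not all residual terms are degree functionals.} For $H=K_3$, the centered triangle sum $n^{-2}\sum\bar Y_{uv}\bar Y_{vw}\bar Y_{wu}$ already has $L^2$ norm of order $n^{-1/2}$ in $G(n,p)$, and vertex-by-vertex degree decoupling does not touch it.
\item \emph{The exchangeable-pair fallback has the same defect.} Take $T=n^{-2}\sum_v(\bar d_v^{\,2}-\E\bar d_v^{\,2})$. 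One Glauber step gives $|T'-T|\lesssim n^{-3/2}$ typically, $|W'-W|\le\sigma^{-1}$, and drift rate $\lambda\asymp N^{-1}$. Even granting your linear regression, $\lambda^{-1}\E[(T'-T)\{f(W')-f(W)\}]$ is only $O(n^{-1/2})$. Improving it requires decorrelating $\bar d_v$ from $\Delta_i(f\circ W)$, which again fails for non-smooth $f$.
\end{itemize}

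The paper never decomposes $D_{H,\bbeta}$. Subtracting the definitions \eqref{stein:rh} for $H$ and $K_2$ gives $\rho\sigma D_H=R_{K_2}-R_H$, hence \eqref{struct:remainder-transfer}. Here $R_H$ is the remainder in the Bernoulli identity \eqref{stein:target-identity}.

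Lemma~\ref{stein:pairing} gives $|\E[R_Hf(V)]|\le C$ uniformly over $1$-Lipschitz $f$, where Cauchy--Schwarz would give only $O(\sqrt n)$. The proof multiplies the Stein equation for $f$ by the centered $R_H$ and applies \eqref{stein:target-identity} to $A=R_H\,(G\circ f_H)$. This reduces everything to three estimates:
\begin{itemize}
\item $\E R_H^2\le Cn$;
\item $\|K_H-1\|_2\le C/n$;
\item $\E|\sum_iq_i(1-q_i)\Delta_iR_H|\le Cn$, with the absolute value taken after summation.
\end{itemize}
The change of argument $\E[R_H\{f(V)-f(W)\}]$ is $O(1)$ by Cauchy--Schwarz and \eqref{stein:linear}. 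Dividing by $\rho\sigma\asymp n$ gives $C/n$.

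The same pairing estimate with $f(w)=w$, inserted into \eqref{stein:variance-identity}, is also what yields $\E W_{\bbeta}^2=1+O(n^{-1})$. The appendix provides only $\Var(E_{\bbeta})\le\kappa N$, and Cauchy--Schwarz gives only $1+O(n^{-1/2})$. So your treatment of the true-variance versions rests on the same missing ingredient.
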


To interpret Theorem~\ref{thm:residual}, first take $f(w)=w$ in \eqref{eq:residual-bounds}. This yields the covariance bound \eqref{eq:residual-covariance}. For true-variance normalization, the corresponding covariance can also be expressed directly in terms of the difference, since
\[
\Cov(\widehat D_H,\widehat W_{K_2,\bbeta})
=-\tfrac12\E\widehat D_H^{\,2},\qquad
\widehat D_H=\widehat W_{H,\bbeta}-\widehat W_{K_2,\bbeta}.
\]
The estimate for general Lipschitz $f$ in \eqref{eq:residual-bounds} contains more information than this covariance identity. It also controls nonlinear functions of the normalized edge count and, by Taylor's formula, yields the smooth comparison \eqref{eq:residual-smooth}.

This smooth comparison does not directly cover indicator tests or arbitrary Lipschitz functions. We therefore apply Stein's method directly to the normalized subgraph count to obtain bounds in the two distances defined below.

Let $Z\sim N(0,1)$, with distribution function $\Phi$, density $\phi$ and upper tail $\Phibar=1-\Phi$. We use
\begin{equation}
\dKol(U,Z)=\sup_z|\Pp(U\le z)-\Phi(z)|,\qquad
\dWass(U,Z)=\sup_{\Lip(\psi)\le1}|\E\psi(U)-\E\psi(Z)|.
\label{eq:distances}
\end{equation}
The supremum defining $\dWass$ is over real $1$-Lipschitz functions; we use this distance for variables $U$ with $\E|U|<\infty$.

\begin{theorem}\label{thm:normal}
Let $\B$ be an arbitrary fixed nonempty compact subset of $\mathcal S_+$ or of $\mathcal D$. For an arbitrary fixed graph $H$ with $e_H\ge1$, there is a constant $C=C_{\B,\cH,H}$ such that, for all $n\ge2$ and $\bbeta\in\B$,
\begin{equation}
\dKol(W_{H,\bbeta},Z)+\dWass(W_{H,\bbeta},Z)
+\left|\frac{\widehat\sigma_{H,\bbeta}^2}{\sigma_{H,\bbeta}^2}-1\right|
\le\frac Cn.
\label{eq:main-normal}
\end{equation}
If $n\ge v_H$, then $\widehat\sigma_{H,\bbeta}>0$, and the same distance bounds hold for $\widehat W_{H,\bbeta}$.
\end{theorem}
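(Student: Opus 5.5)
The proof runs in two stages. First we establish the bound for the edge count $W_{\bbeta}$ by a Stein exchangeable-pair (Glauber) argument. We then transfer it to $W_{H,\bbeta}$ through a second application of the Stein equation that uses Theorem~\ref{thm:residual}.

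\emph{Edge count.} Take the exchangeable pair $(Y,Y')$ obtained by resampling a uniformly chosen edge $s\in\cI$ from its conditional law given the rest. Then
\[
\E[W-W'\mid Y]=\frac{1}{N\sigma_{\bbeta}}\sum_s\bigl(Y_s-\Psi(\partial_sT_{\bbeta}(Y))\bigr),
\]
where $\partial_sT_{\bbeta}$ is the discrete derivative. It equals $2\Phi_{\bbeta}$ of local subgraph densities through $s$, up to $O(1/n)$. Linearizing $\Psi(2\Phi_{\bbeta}(\cdot))$ around $p_{\bbeta}$ and averaging over $s$ gives
\[
\E[W-W'\mid Y]=\lambda\bigl(W+R\bigr),\qquad \lambda=\frac{\rho_{\bbeta}}{N}.
\]
The remainder $R$ consists of two pieces. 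The first is a quadratic term $\frac12\varphi_{\bbeta}''(p_{\bbeta})\,\sigma_{\bbeta}W^2/N$ together with analogous differences between local densities and the global edge density. The second is a term coming from $\widetilde p_{\bbeta}-p_{\bbeta}$.

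The key point is that $R$ is $O(n^{-1})$ \emph{in the weak sense}. By Theorem~\ref{thm:residual}, applied to the 2-stars and to the graphs appearing in $\partial_sT$, each local-density-minus-edge-density piece $D$ satisfies $\E[Df(W)]=O(1/n)$. It is only $O(n^{-1/2})$ in $L^2$. The quadratic piece has deterministic size $O(n^{-1})$ because $\sigma_{\bbeta}\asymp n$ and $N\asymp n^2$. The centering piece is absorbed because $\E W=0$. We also use the variance term
\[
\frac{1}{2\lambda}\E[(W-W')^2\mid Y]=1+O_{L^2}(n^{-1}).
\]
This follows from the concentration and Poincar\'e/covariance estimates of the appendix lemmas: the conditional variance $\Psi(1-\Psi)$ averaged over edges concentrates at rate $n^{-1}$ around $p(1-p)$, and we then divide by $\rho v$. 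Plugging these facts into the Stein identity with solution $f_z$ (Kolmogorov) or $f_\psi$ (Wasserstein) gives the bound. The term $\E[R f(W)]$ is handled by the Lipschitz comparison in Theorem~\ref{thm:residual}, while the variance term needs only $L^1$ control. For Kolmogorov, the jumps satisfy $|W-W'|\le 1/\sigma_{\bbeta}=O(1/n)$, so the standard bounded-difference exchangeable-pair Kolmogorov bound gives $O(1/n)$. This does require the refined inequality where the error is $\E|1-\frac{1}{2\lambda}\E[(W-W')^2\mid Y]|+\frac{\E|W-W'|^3}{\lambda}$; the latter is $N\cdot n^{-3}\cdot N^{-1}\cdot N=O(n^{-1})$.

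\emph{Transfer to $H$.} Write $W_H=W+D_H$. For Wasserstein distance, apply the Stein equation once more: $\E[D_H f_\psi'(W)]$ and the Taylor term are controlled by Theorem~\ref{thm:residual}, which gives \eqref{eq:residual-smooth}-type estimates. For Kolmogorov distance, an $L^2$ difference of size $n^{-1/2}$ is too large for a direct smoothing argument. Instead, we build the exchangeable pair directly for $W_H$. We have
\[
\E[W_H-W_H'\mid Y]=\lambda W+\lambda R_H,
\]
and we rewrite this as $\lambda W_H-\lambda D_H+\lambda R_H$. The term $\E[D_H f_z(W_H)]$ cannot be handled by Theorem~\ref{thm:residual} directly, since $f_z(W_H)$ is a function of $W_H$ and not of $W$. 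We therefore expand $f_z(W_H)=f_z(W)+D_Hf_z'(\xi)$, where the second-order term is $\E D_H^2\cdot O(1)=O(1/n)$. The first-order piece is $\E[D_Hf_z(W)]=O(1/n)$, since $f_z$ is Lipschitz with constant at most $1$. This is the "two applications of the Stein equation." The increments $|W_H-W_H'|$ are $O(1/n)$ on the high-probability event that codegrees are typical. The third-moment and variance terms then follow as before, using the appendix covariance estimates.

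\emph{Variance.} The relative variance error comes from $\widehat\sigma_H^2/\sigma_H^2=\E W_H^2$ and
\[
\E W_H^2=\E W^2+2\Cov(D_H,W)+\E D_H^2,
\]
where the last two terms are $O(1/n)$ by \eqref{eq:residual-covariance} and \eqref{eq:residual-bounds}. The first, $\E W^2=1+O(1/n)$, follows from the exchangeable-pair identity $\E W^2=\frac{1}{2\lambda}\E(W-W')^2-\E RW$. Positivity of $\widehat\sigma_H$ for $n\ge v_H$ follows from the positivity of all configuration probabilities. The hatted versions follow by rescaling, since $|\widehat\sigma/\sigma-1|=O(1/n)$ and the centering is unchanged. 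Uniformity over $\B$ comes from the uniform constants in the appendix. The main obstacle is obtaining $O(1/n)$ rather than $O(n^{-1/2})$ for the variance-of-conditional-variance term, which requires sharp covariance decay between edge indicators under Glauber dynamics.
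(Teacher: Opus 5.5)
Your overall architecture is sound: the Glauber exchangeable pair is equivalent to the paper's identity \eqref{stein:kol-decomposition}. Your treatment of $\E[D_Hf_z(W_H)]$, via $|f_z(W_H)-f_z(W)|\le|D_H|$ and Theorem~\ref{thm:residual}, is also correct. The gap is in Stage~1, exactly where the $n^{-1}$ rate is decided: the claim $|\E[Rf(W)]|\le C/n$ for $R=(\rho\sigma)^{-1}\sum_s(Y_s-q_s)-W$.

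You justify this claim by splitting $R$ into linear local-minus-global differences, controlled by Theorem~\ref{thm:residual}, plus a quadratic term $\tfrac12\varphi''(p)\sigma W^2/N=O(n^{-1})$. That decomposition misses the dominant nonlinear contribution. The local fields fluctuate at scale $n^{-1/2}$, not $n^{-1}$. In the edge--two-star model, for instance, $L_s-\ell_0\approx2\beta_2(d_u+d_v-2np)/n$ for $s=\{u,v\}$, which is driven by vertex degrees. Hence the second-order Taylor term of $q_s=\Psi(L_s)$ contributes $\frac{\Psi''(\ell_0)}{2\rho\sigma}\sum_s(L_s-\ell_0)^2$. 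This has mean of order one and $L^2$ fluctuations of order $n^{-1/2}$, the same order as the linear pieces. Replacing local by global densities inside the square leaves an empirical variance of local densities. That is quadratic in local deviations, not a difference $W_G-W$ of the kind Theorem~\ref{thm:residual} controls. The cubic and higher Taylor terms are not addressed either. With only Cauchy--Schwarz you get $\E[Rf(W)]=O(n^{-1/2})$. This loss propagates to the edge-count distances, to $\E W^2=1+O(n^{-1})$ (which you derive from $\E[RW]$), and to the pairing of your $R_H$ with $f(W_H)$ in Stage~2.

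This weak bound is the content of Lemma~\ref{stein:pairing}, and it is where the paper's second Stein equation is actually used; your Taylor step for $f_z(W_H)$ is not a second Stein equation. The paper solves $xG-G'=f-\E f(Z)$, multiplies by the centered remainder, and applies \eqref{stein:bernoulli} to $A=R_H\,(G\circ f_H)$, arriving at \eqref{stein:second-identity}. The resulting terms are bounded by $\E R_H^2\le Cn$, by $\|K_H-1\|_2\le Cn^{-1}$, and by the summed estimate \eqref{stein:rh-weighted}. The proof of \eqref{stein:rh-weighted} needs $|\E\Delta_iR|\le C/n$ from \eqref{resp:meanbounds}, which rests on the covariance bound. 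In the paper, Theorem~\ref{thm:residual} is itself derived from Lemma~\ref{stein:pairing}, so it cannot replace that lemma without further work.

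A possible repair within your framework would be the following. Expand $\Psi$ to third order, rewrite each $\sum_s(L_s-\ell_0)^k$ as a combination of counts of glued graphs, and apply Theorem~\ref{thm:residual} to each. Then verify that the $W$-coefficients cancel to order $n^{-1}$, and bound the fourth-order remainder in $L^1$. None of this is in the proposal. The paper avoids the two-stage structure altogether by applying \eqref{stein:kol-decomposition} directly to $V=W_{H,\bbeta}$, with remainder $R_H=\sum_i(Y_i-q_i)-\rho\sigma V$.

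There are also three smaller points.
\begin{itemize}
\item In Stage~2, $\E[W_H-W_H'\mid Y]=N^{-1}\sum_s(Y_s-q_s)\Delta_sf_H$ contains, besides $\lambda(W+R)$, the term $(\lambda/\rho)\sum_s(Y_s-q_s)(\Delta_sf_H-\sigma^{-1})$. Its pairing with $f(W_H)$ is only $O(n^{-1/2})$ when bounded termwise, so it needs \eqref{stein:bernoulli} together with a threshold estimate.
\item Your Wasserstein transfer through \eqref{eq:residual-smooth} requires $\Lip(\psi')\le1$. For a merely Lipschitz $\psi$, $F_\psi'$ is only Lipschitz, so comparing at $W_H$ and $W$ gives only $O(\E|D_H|)=O(n^{-1/2})$. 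The direct pair for $W_H$ must be used there as well.
\item The bound $|W_H-W_H'|\le C/n$ holds deterministically by \eqref{stein:increment-size}, so no typical-codegree event is needed.
\end{itemize}
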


\begin{remark}
We compare \eqref{eq:main-normal} with the available quantitative central limit theorems. For the edge count, Fang, Liu, Shao and Zhao \cite[Theorem 3.1 and Corollary 3.1]{FLSZ} obtain $O(n^{-1/4})$ bounds in both distances on $\mathcal S_+$ and improve them to $O(n^{-1/2})$ on $\mathcal S_+\cap\mathcal D$. They also prove an $O(n^{-1/4})$ Wasserstein bound for fixed subgraph counts. Winstein \cite[Theorem 1.1 and Corollary 1.2]{WQ} obtains $O(n^{-1/2+\varepsilon})$, for every fixed $\varepsilon>0$, in both distances for edges and in Wasserstein distance for fixed subgraphs. Fang, Liu, Shao and Zhao use the same injective counting convention as we do, whereas Winstein uses ordinary homomorphism counts in the energy. For the corresponding dense subcritical models, we improve the rates to $O(n^{-1})$ and obtain direct Kolmogorov bounds for arbitrary fixed subgraphs, together with relative variance error of the same order. In particular, for triangles, Sambale and Sinulis \cite[Proposition 3.4]{SS} transfer an edge-count Wasserstein estimate with an additional $O(n^{-1/2})$ term. Our proof estimates the triangle count directly and avoids this loss of precision.

For a concrete comparison, consider the original edge--triangle family $(K_2,K_3)$, where $K_3$ is the triangle. By \eqref{eq:main-normal}, we obtain both distance bounds and the relative variance estimate uniformly on compact subsets of $\mathbb R\times(-1/3,0)$ and of $\mathcal S_+$. In the common parameter range, these estimates quantitatively strengthen the edge-count central limit theorem of Bianchi, Collet and Magnanini \cite[Theorem 3.6]{BCM}. Their parameters $\alpha=6\beta_2$ and $b=2\beta_1$ identify the negative interval with $-2<\alpha<0$.
\end{remark}

The independent-edge case already shows that the rate in Theorem~\ref{thm:normal} is optimal over the model class. When $\beta_j=0$ for $2\le j\le k$, \eqref{eq:model} reduces to $G(n,p)$ with $p=\Psi(2\beta_1)$, and $E_{\bbeta}\sim\operatorname{Bin}(N,p)$. For fixed $p\in(0,1)$, the optimal order is $N^{-1/2}\asymp n^{-1}$ in both distances, as discussed by Fang and Liu \cite[Section 1]{FL2025} for Kolmogorov distance and by Goldstein \cite[Theorem 1.1 and (3)]{Goldstein2010} for Wasserstein distance. The lattice argument leading to \eqref{stein:lattice-lower} also proves matching lower bounds for the interacting models.

Theorem~\ref{thm:normal} allows signed interactions under the Dobrushin condition \eqref{eq:dobrushin}. We next show that small signed perturbations of ferromagnetic subcritical models preserve the unique stable fixed point and fluctuation estimates without requiring that condition. This stability also supports the parameter changes in our likelihood and moderate deviation arguments. For a fixed finite family $\mathcal J=(J_1,\ldots,J_m)$ of graphs without isolated vertices and with at least one edge, put
\begin{equation}
\begin{aligned}
T_{\btheta}(y)&=T_{\bbeta}(y)+\sum_{\ell=1}^m h_\ell n^{2-v_{J_\ell}}|\Hom(J_\ell,G_y)|,\qquad
\btheta=(\bbeta,\bh),\\
\B_{\mathcal J}(\eta)&=\left\{(\bbeta,\bh):\bbeta\in\B,\ \sum_{\ell=1}^m e_{J_\ell}^2|h_\ell|\le\eta\right\}.
\end{aligned}
\label{eq:signed-family}
\end{equation}
An added edge term is combined with the original edge coefficient, and repeated templates may likewise be combined. For the enlarged energy, we use \eqref{eq:meanfield}, \eqref{eq:edge-scale} and \eqref{eq:subgraph-scale} with subscript $\btheta$, so that the center, fixed point and scale all correspond to the perturbed model. With this notation, we can state the stability result and the accompanying uniform estimates.

\begin{theorem}\label{thm:signed}
Let $\B\subset\mathcal S_+$ be an arbitrary fixed nonempty compact set. There exists $\eta>0$, depending only on $\B$ and $\cH$, such that, for every fixed $\mathcal J$ in \eqref{eq:signed-family} and every $\btheta\in\B_{\mathcal J}(2\eta)$, the equation $u=\varphi_{\btheta}(u)$ has a unique solution $p_{\btheta}\in(0,1)$, and this solution satisfies $\rho_{\btheta}:=1-\varphi_{\btheta}'(p_{\btheta})>0$. For an arbitrary fixed observed graph $H$ with $e_H\ge1$, there is $C=C_{\B,\cH,\mathcal J,H}$ such that
\begin{equation}
\dKol(W_{H,\btheta},Z)+\dWass(W_{H,\btheta},Z)
+\left|\frac{\widehat\sigma_{H,\btheta}^2}{\sigma_{H,\btheta}^2}-1\right|
\le\frac Cn,
\qquad \btheta\in\B_{\mathcal J}(2\eta),\quad n\ge2.
\label{eq:normal-signed}
\end{equation}
For $n\ge v_H$, the true variance is positive and the two distance bounds also hold for $\widehat W_{H,\btheta}$. Moreover, all conclusions of Theorem~\ref{thm:residual} hold uniformly on $\B_{\mathcal J}(2\eta)$ after replacing $\bbeta$ by the actual parameter $\btheta$.
\end{theorem}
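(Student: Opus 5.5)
Theorem~\ref{thm:signed} has two parts: a deterministic fixed-point statement, and the transfer of the fluctuation estimates of Theorems~\ref{thm:residual} and~\ref{thm:normal}. For the fixed-point part, observe that $\Phi_{\btheta}(u)=\Phi_{\bbeta}(u)+\sum_\ell h_\ell e_{J_\ell}u^{e_{J_\ell}-1}$. The perturbation therefore satisfies, uniformly on $[0,1]$,
\[
\sup_u|\Phi_{\btheta}-\Phi_{\bbeta}|\le\textstyle\sum_\ell e_{J_\ell}|h_\ell|\le 2\eta
\quad\text{and}\quad
\sup_u|\Phi_{\btheta}'-\Phi_{\bbeta}'|\le\textstyle\sum_\ell e_{J_\ell}^2|h_\ell|\le 2\eta .
\]
This is exactly why the weights $e_{J_\ell}^2$ appear in \eqref{eq:signed-family}: the bound does not depend on $\mathcal J$. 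On the compact set $\B$, the function $g_{\bbeta}(u)=u-\varphi_{\bbeta}(u)$ has a unique zero, with $g'_{\bbeta}(p_{\bbeta})=\rho_{\bbeta}\ge c>0$. By compactness and continuity in $\bbeta$, there are $\delta,c'>0$ such that $g_{\bbeta}'\ge c'$ on $[p_{\bbeta}-\delta,p_{\bbeta}+\delta]$ and $|g_{\bbeta}|\ge c'$ outside this interval, uniformly over $\bbeta\in\B$. The $C^1$ perturbation of $g$ is at most $C\eta$, so for $\eta$ small the perturbed $g_{\btheta}$ keeps a unique zero $p_{\btheta}$ in the interval, with $\rho_{\btheta}\ge c'/2$. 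It also keeps $p_{\btheta}$ bounded away from $0$ and $1$. Both $\eta$ and these constants depend only on $\B$ and $\cH$.

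For the fluctuation estimates, I will not invoke Theorems~\ref{thm:residual}--\ref{thm:normal} as black boxes, since $\btheta$ need not lie in $\mathcal S_+\cup\mathcal D$. Instead I will check that their proofs rest only on a few structural inputs, each uniform over the parameter set.
\begin{itemize}
\item[(a)] A contraction property for Glauber dynamics, obtained by coupling. For configurations differing at one edge, the conditional edge probabilities differ by at most $\tfrac1n\sup_u|\varphi'|$-type quantities, evaluated near the typical density.
\item[(b)] Concentration of all normalized subgraph densities around powers of $p_{\btheta}$ at rate $n^{-1/2}$, with all moments.
\item[(c)] The covariance and variance estimates of the appendix, which yield $\widehat\sigma^2/\sigma^2=1+O(n^{-1})$.
\end{itemize}
With these in hand, the two Stein-equation steps go through verbatim with $\btheta$ in place of $\bbeta$. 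This covers the exchangeable-pair or Glauber-generator identity for $W$, the expansion of the conditional probability $\Psi(\partial_sT_{\btheta})$ around the fixed point, and the Lipschitz test of $D_H$. The constants then depend additionally on $\mathcal J$, through the number and size of the added templates.

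The main obstacle is input (b) in the signed setting without Dobrushin. Here I would follow the Bhamidi--Bresler--Sly/Ganguly--Nam route, which gives rapid mixing and concentration in the high-temperature phase. Their argument uses monotonicity (FKG), and the signed terms destroy it. My first attempt is a two-stage argument. In stage one, I use a large-deviation/variational estimate (Chatterjee--Varadhan, Chatterjee--Diaconis) to show that the free energy has a unique maximizer at the constant graphon $p_{\btheta}$. This follows for small $\eta$ because the perturbed functional differs from the ferromagnetic one by $O(\eta)$ uniformly in graphons. The ferromagnetic maximizer is nondegenerate, so the perturbed maximizer is unique and constant by uniform strict concavity near it. As a consequence, the graph stays with overwhelming probability in the region where all $t(J_\ell,G)$ are close to $p^{e}$.

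In stage two, inside this good set, the Glauber contraction from (a) holds with rate $\sup|\varphi'_{\btheta}|$ evaluated near $p_{\btheta}$, which is below $1$ by the first paragraph. The local-contraction (burn-in) arguments then give the $n^{-1/2}$ concentration and covariance decay. Exponentially small bad-set contributions are absorbed into the $O(n^{-1})$ errors. Finally, positivity of $\widehat\sigma_{H,\btheta}$ for $n\ge v_H$ follows from the variance asymptotics for large $n$. For small $n$, it follows because the full-support Gibbs measure makes the nonconstant count nondegenerate.
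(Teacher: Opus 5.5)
Your fixed-point paragraph is correct and is essentially Step~1 of Lemma~\ref{lem:uniform-input}. Your plan of rerunning the proofs of Theorems~\ref{thm:residual} and \ref{thm:normal} verbatim also matches the paper. There the signed family enters only through Lemma~\ref{lem:uniform-input}, namely the fixed-point bounds and the block contraction \eqref{pi:block-seminorm} that feeds Lemma~\ref{lem:poincare}.

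The gap lies in what the probabilistic input must be and in how you propose to obtain it. The Stein arguments do not consume concentration of subgraph densities. They need the Poincar\'e inequality \eqref{eq:poincare} for arbitrary configuration functions (it is applied to $\Delta_if_H$, $q_i$, $L_s$, $\Delta_iR$, $K$, $U$, \ldots) and the $\ell^1$--$\ell^\infty$ covariance bound \eqref{eq:covariance-bound}. The latter cannot be replaced by \eqref{eq:poincare}. For $\Cov(q_s,\prod_{r\in A}Y_r)$ in Lemma~\ref{loc:product}, and for the sum over $j$ of $\Cov(a,\zeta)$ in \eqref{resp:mixedcov}, Cauchy--Schwarz with \eqref{eq:poincare} yields only $O(n^{-1/2})$. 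That is too weak for \eqref{loc:meanprecision} and \eqref{resp:meanbounds}, on which the $O(n^{-1})$ rates rest. Both inequalities come from contraction of coupled chains started from \emph{every} adjacent pair of configurations.

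Your stage one cannot supply this. The Chatterjee--Varadhan LDP describes the stationary measure in the cut metric, hence only global densities. It does not give control of rooted counts at every edge, which the coupling needs because the influence sum $\sum_{i\ne s}|\Delta_sq_{i,\btheta}|$ is local at $s$. It also gives no dynamical information. By \eqref{pi:dirichlet} and \eqref{pi:conditional-bounds}, \eqref{eq:poincare} is equivalent to $\operatorname{gap}(P)\ge c/N$. A set of tiny stationary mass from which the chain escapes at rate $o(1/N)$ per step would violate this, and no stationary concentration estimate excludes such a set. You must show that chains from arbitrary starting states, together with the paths joining coupled pairs, enter the contracting region within $O(N)$ updates. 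That is exactly the monotone burn-in you correctly say is lost.

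There is also a smaller point about the contraction rate. With signed coefficients it is the absolute-influence sum, not $\varphi_{\btheta}'$ itself. The paper bounds it in \eqref{pi:column-contraction} by $\varphi_{\bbeta}'(p_{\bbeta})+C(\delta+\eta+n^{-1})$ through the ferromagnetic base.

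The missing idea is a ferromagnetic sandwich.
\begin{itemize}
\item By \eqref{pi:perturbation-size} the added fields are at most $4\eta$ in absolute value. Hence $L_{i,\bbeta^-}\le L_{i,\btheta}\le L_{i,\bbeta^+}$ with $\bbeta^\pm=\bbeta\pm2\eta\mathbf e_1$ ferromagnetic subcritical.
\item Monotonicity of the two comparison updates alone then preserves $X_s^{\mathbf0,\bbeta^-}\le X_s^{y,\btheta}\le X_s^{\mathbf1,\bbeta^+}$ under the common coupling; see \eqref{pi:ordered-update-sandwich}--\eqref{pi:common-envelopes}.
\item Ganguly--Nam relaxation applies to the two ferromagnetic chains, made uniform by finitely many parameter boxes. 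Combined with monotonicity of rooted counts, it places both target chains and every shortest path between them in $\mathcal T_{\bbeta}(\delta)$ after $O(N)$ updates, outside an event of probability $Ce^{-cn}$.
\item On that set the column sums are at most $1-\alpha$. The crude growth bound \eqref{pi:global-coupling-growth} controls the remaining steps, giving \eqref{pi:block-seminorm}.
\end{itemize}
Once this lemma is in place, your verbatim transfer of the Stein steps goes through.
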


\begin{remark}
The radius $\eta$ in Theorem~\ref{thm:signed} is independent of $n$, so the added coefficients may remain fixed as the graph size grows. To apply Theorem~\ref{thm:md}, we tilt by an observed subgraph, possibly introducing a negative coefficient for the lower tail. Starting from $\B_{\mathcal J}(\eta)$, we reserve the margin up to $2\eta$ for both tilt directions. Thus \eqref{eq:normal-signed} supplies the normal approximation and variance estimates throughout the change of measure.
\end{remark}

We now combine Theorem~\ref{thm:residual} with the normal approximations in Theorems~\ref{thm:normal} and \ref{thm:signed} to describe joint subsequential limits. The first bound in \eqref{eq:residual-bounds} makes the differences tight after multiplication by $\sqrt n$. The second identifies their conditional means given the limiting edge fluctuation.

\begin{corollary}\label{cor:conditional-centering}
Fix graphs $H^{(1)},\ldots,H^{(m)}$, each with at least one edge. Let $\bbeta_n$ be any parameter sequence in a fixed compact set of Theorem~\ref{thm:residual}, or in a fixed enlarged family of Theorem~\ref{thm:signed}, using the actual enlarged parameter in the latter case. The sequence
\[
\bigl(W_{\bbeta_n},\sqrt nD_{H^{(1)},\bbeta_n},\ldots,
\sqrt nD_{H^{(m)},\bbeta_n}\bigr)
\]
is tight. If along a subsequence
\begin{equation}\label{eq:residual-joint-limit}
\bigl(W_{\bbeta_n},\sqrt nD_{H^{(1)},\bbeta_n},\ldots,
\sqrt nD_{H^{(m)},\bbeta_n}\bigr)
\Longrightarrow (Z,U_1,\ldots,U_m),
\end{equation}
then $Z\sim N(0,1)$, each $U_j$ has a finite second moment, and
\begin{equation}\label{eq:residual-conditional-mean}
\E[U_j\mid Z]=0,\qquad 1\le j\le m.
\end{equation}
The same assertions hold under true-variance normalization for all components and for the edge count, taking $n\ge\max_{1\le j\le m}v_{H^{(j)}}$.
\end{corollary}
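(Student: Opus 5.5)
The corollary follows from Theorem~\ref{thm:residual}, Theorem~\ref{thm:normal} and Theorem~\ref{thm:signed} by a soft limiting argument; no new estimates on the graph model are needed.

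\emph{Tightness and the Gaussian marginal.} Along any admissible sequence $\bbeta_n$, the first bound in \eqref{eq:residual-bounds} gives $\E(\sqrt n D_{H^{(j)},\bbeta_n})^2\le C$ with $C$ uniform in $n$. For the edge component, $\dWass(W_{\bbeta_n},Z)\le C/n$ by Theorem~\ref{thm:normal} (or Theorem~\ref{thm:signed}), with $H=K_2$. This implies $\E|W_{\bbeta_n}|\le \E|Z|+C$, and in fact $W_{\bbeta_n}\Rightarrow N(0,1)$. Each coordinate is therefore tight, so the vector is tight. Along the subsequence in \eqref{eq:residual-joint-limit}, the first marginal converges to the law of $Z$, which must then be $N(0,1)$. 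Since the second moments of the other coordinates are bounded, Fatou's lemma (via Skorokhod representation, or lower semicontinuity of $x\mapsto x^2$ under weak convergence) gives $\E U_j^2\le\liminf\E(\sqrt nD_{H^{(j)},\bbeta_n})^2\le C<\infty$.

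\emph{Conditional centering.} It suffices to show $\E[U_j g(Z)]=0$ for every bounded Lipschitz $g$, because such functions are dense enough to determine $\E[U_j\mid Z]$, which exists since $U_j\in L^2$. Fix such a $g$ with $\Lip(g)\le L$. The second bound in \eqref{eq:residual-bounds}, applied to $f=g/L$, gives
\[
\bigl|\E[\sqrt n D_{H^{(j)},\bbeta_n}\, g(W_{\bbeta_n})]\bigr|\le \frac{CL}{\sqrt n}\to0 .
\]
It remains to pass to the limit in $\E[X_n g(Y_n)]$ with $(Y_n,X_n)\Rightarrow(Z,U_j)$. The function $(y,x)\mapsto x g(y)$ is continuous but unbounded. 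However, $\sup_n\E X_n^2<\infty$, so the family $X_n g(Y_n)$ is uniformly integrable. Hence $\E[X_ng(Y_n)]\to\E[U_j g(Z)]$, and therefore $\E[U_jg(Z)]=0$. A monotone class or $L^2$ density argument then extends this to all bounded measurable $g$, which gives \eqref{eq:residual-conditional-mean}.

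\emph{True-variance normalization.} For $n\ge\max_j v_{H^{(j)}}$, the last sentences of Theorem~\ref{thm:residual} and Theorem~\ref{thm:normal} supply the same two bounds for $\widehat W$ and for the hatted differences, so the argument above applies verbatim. In the enlarged family, the corresponding statements in Theorem~\ref{thm:signed} are used at the actual parameter.

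The only delicate step is the uniform integrability needed to pass to the limit with an unbounded integrand. The uniform $L^2$ bound handles it. This is also why the corollary requires the rescaling by exactly $\sqrt n$: at that scale the second moments stay bounded while the Lipschitz product bound of order $1/n$ becomes $o(1)$.
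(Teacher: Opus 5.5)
Your proposal is correct and follows essentially the same route as the paper. Uniform second-moment bounds give tightness and square-integrable limits, and the $O(n^{-1})$ Lipschitz expected-product bound plus uniform integrability gives $\E[U_j g(Z)]=0$ for bounded Lipschitz $g$, which identifies the conditional mean; the only differences are cosmetic (you get tightness of $W_{\bbeta_n}$ from the Wasserstein bound rather than the variance calibration, and use Fatou instead of the truncation $u^2\wedge K$).
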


The conclusion \eqref{eq:residual-conditional-mean} specifies the conditional mean, but does not assert that $U_j$ and $Z$ are independent. The variable $U_j$ may also be degenerate. To identify its distribution, one must study the remaining fluctuations more closely, as in the conditional two-star central limit theorem of Fang, Liu, Su and Wang \cite{FLSW}.

The common edge-count approximation also has a statistical interpretation at the leading fluctuation scale. Since the scaled counts appearing in the Hamiltonian are sufficient statistics, their covariance matrix is the Fisher information matrix. We use this approximation to identify the leading term of the matrix and expand the likelihood ratio for nearby parameters. For the energy family in \eqref{eq:model}, let
\begin{equation}\label{eq:sufficient-statistics}
\bA_n(y)=\bigl(n^{2-v_j}|\Hom(H_j,G_y)|\bigr)_{j=1}^k,
\qquad
\ba_{\bbeta}=\bigl(e_jp_{\bbeta}^{e_j-1}\bigr)_{j=1}^k.
\end{equation}
Here the vectors are written as columns, with $|\cdot|$ denoting the Euclidean norm and $\|\cdot\|_{\mathrm{op}}$ the induced matrix norm. Differentiating the finite sum defining $Z(\bbeta)$ in \eqref{eq:model}, we obtain the score and Fisher information identities
\begin{equation}\label{eq:score-information}
\nabla_{\bbeta}\log p_{\bbeta}(Y_{\bbeta})=\bA_n(Y_{\bbeta})-\E\bA_n(Y_{\bbeta}),\quad
\mathcal I_n(\bbeta)\coloneqq\Cov(\bA_n(Y_{\bbeta}))=\nabla_{\bbeta}^2\log Z(\bbeta).
\end{equation}
For the perturbed models in \eqref{eq:signed-family}, we use the full list of coefficients and sufficient statistics. Repeated templates may be retained as separate coordinates, in which case the finite-sample information matrix can be singular. The rank-one leading term below reflects the common edge-count fluctuation, whether or not the energy family contains repeated graphs.

\begin{theorem}\label{thm:likelihood}
Let $\B$ be an arbitrary fixed nonempty compact subset of $\mathcal S_+$ or of $\mathcal D$. For the fixed energy family, there is a constant $C$ such that
\begin{equation}\label{eq:information-expansion}
\left\|\mathcal I_n(\bbeta)
-4Nv_{\bbeta}\ba_{\bbeta}\ba_{\bbeta}^{\mathsf T}
\right\|_{\mathrm{op}}\le Cn,
\qquad \bbeta\in\B,\quad n\ge2.
\end{equation}
For every fixed $M>0$, there are constants $C_M$ and $n_0$ such that, for $n\ge n_0$, $\bbeta\in\B$ and $|\bu|\le M$,
\begin{equation}\label{eq:likelihood-expansion}
\begin{aligned}
\log\frac{p_{\bbeta+\bu/n}(Y_{\bbeta})}{p_{\bbeta}(Y_{\bbeta})}
=\frac{\bu^{\mathsf T}\{\bA_n(Y_{\bbeta})-\E\bA_n(Y_{\bbeta})\}}n
-v_{\bbeta}(\ba_{\bbeta}^{\mathsf T}\bu)^2
+\mathcal R_{n,\bbeta}(\bu),\\
\sup_{\bbeta\in\B}\sup_{|\bu|\le M}|\mathcal R_{n,\bbeta}(\bu)|
\le C_M/n.\qquad\qquad\qquad\qquad\qquad
\end{aligned}
\end{equation}
The remainder $\mathcal R_{n,\bbeta}(\bu)$ is deterministic. For fixed $\bbeta$ and $\bu$, as $n\to\infty$,
\begin{equation}\label{eq:likelihood-limit}
\log\frac{p_{\bbeta+\bu/n}(Y_{\bbeta})}{p_{\bbeta}(Y_{\bbeta})}
\Longrightarrow
\sqrt{2v_{\bbeta}}(\ba_{\bbeta}^{\mathsf T}\bu)Z
-v_{\bbeta}(\ba_{\bbeta}^{\mathsf T}\bu)^2.
\end{equation}
Convergence holds jointly for any fixed finite collection of directions, with the same $Z$ in all limits. The bounds \eqref{eq:information-expansion} and \eqref{eq:likelihood-expansion} also hold uniformly for the inner signed family $\B_{\mathcal J}(\eta)$ of Theorem~\ref{thm:signed}, with the actual enlarged parameter and vectors. For each fixed parameter in that family, the same limiting statements hold. The constants may additionally depend on $\mathcal J$, and the shifted parameters are controlled by the outer perturbation bound.
\end{theorem}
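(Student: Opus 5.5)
The plan is to reduce everything to statements about the log-partition function and then invoke Theorem~\ref{thm:residual}, Theorem~\ref{thm:normal} and Theorem~\ref{thm:signed} uniformly on a slightly enlarged compact set.

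\emph{Information expansion \eqref{eq:information-expansion}.} Write $A_j=n^{2-v_j}|\Hom(H_j,G_Y)|$. By the normalization \eqref{eq:subgraph-scale},
\[
A_j-\E A_j=n^{2-v_j}\sigma_{H_j,\bbeta}W_{H_j,\bbeta}=2e_jp_{\bbeta}^{e_j-1}\sigma_{\bbeta}W_{H_j,\bbeta}=2a_j\sigma_{\bbeta}(W_{\bbeta}+D_{H_j,\bbeta}),
\]
where $a_j$ is the $j$th coordinate of $\ba_{\bbeta}$. Hence
\[
\Cov(A_i,A_j)=4a_ia_j\sigma_{\bbeta}^2\bigl[\Var W_{\bbeta}+\Cov(W_{\bbeta},D_{H_j,\bbeta})+\Cov(D_{H_i,\bbeta},W_{\bbeta})+\Cov(D_{H_i,\bbeta},D_{H_j,\bbeta})\bigr].
\]
By Theorem~\ref{thm:normal} with $H=K_2$, $|\Var W_{\bbeta}-1|\le C/n$. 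The two middle covariances are $O(1/n)$ by \eqref{eq:residual-covariance}. The last term is $O(1/n)$ by Cauchy--Schwarz and the first bound in \eqref{eq:residual-bounds}. Since $\sigma_{\bbeta}^2=Nv_{\bbeta}\asymp n^2$, each entry deviates from $4Nv_{\bbeta}a_ia_j$ by $O(n)$. The operator norm of a $k\times k$ matrix is bounded by $k$ times the maximal entry, so \eqref{eq:information-expansion} follows. For $\btheta$ in $\B_{\mathcal J}(2\eta)$ the same computation applies, using Theorem~\ref{thm:signed} instead.

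\emph{Likelihood expansion.} The log likelihood ratio equals
\[
\bu^{\mathsf T}\bA_n/n-\bigl[\log Z(\bbeta+\bu/n)-\log Z(\bbeta)\bigr].
\]
Add and subtract $\bu^{\mathsf T}\E\bA_n/n=\bu^{\mathsf T}\nabla\log Z(\bbeta)/n$. Taylor's formula with integral remainder to second order, together with \eqref{eq:score-information}, gives the deterministic quantity
\[
\log Z(\bbeta+\bu/n)-\log Z(\bbeta)-\bu^{\mathsf T}\nabla\log Z(\bbeta)/n=\int_0^1(1-t)\,\bu^{\mathsf T}\mathcal I_n(\bbeta+t\bu/n)\bu\,dt/n^2.
\]
For $n\ge n_0$, all $\bbeta+t\bu/n$ with $|\bu|\le M$ lie in a compact enlargement $\B'$ that stays inside $\mathcal S_+$ or $\mathcal D$; both regions are open, since uniqueness and a positive $\rho$ persist under small perturbations of $\B$. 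In the signed case they lie in $\B_{\mathcal J}(2\eta)$. Apply \eqref{eq:information-expansion} on $\B'$, and note that $\beta\mapsto v_{\bbeta}\ba_{\bbeta}\ba_{\bbeta}^{\mathsf T}$ is Lipschitz on $\B'$ because $p_{\bbeta}$ is smooth by the implicit function theorem with $\rho>0$. This gives
\[
\bu^{\mathsf T}\mathcal I_n(\bbeta+t\bu/n)\bu/n^2=4Nv_{\bbeta}(\ba_{\bbeta}^{\mathsf T}\bu)^2/n^2+O(M^2/n)+O(M^3/n).
\]
Since $4N/n^2=2-2/n$, integrating against $(1-t)$ yields $v_{\bbeta}(\ba_{\bbeta}^{\mathsf T}\bu)^2+O(C_M/n)$. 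This proves \eqref{eq:likelihood-expansion}.

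\emph{Limit \eqref{eq:likelihood-limit}.} By the first display,
\[
\bu^{\mathsf T}(\bA_n-\E\bA_n)/n=(2\sigma_{\bbeta}/n)\,\ba_{\bbeta}^{\mathsf T}\bu\,W_{\bbeta}+(2\sigma_{\bbeta}/n)\textstyle\sum_j u_ja_jD_{H_j,\bbeta}.
\]
Here $2\sigma_{\bbeta}/n\to\sqrt{2v_{\bbeta}}$. The second term tends to zero in $L^2$ by \eqref{eq:residual-bounds}, and $W_{\bbeta}\Rightarrow Z$ by Theorem~\ref{thm:normal}. For finitely many directions the first term uses the same $W_{\bbeta}$ in every direction, which gives joint convergence with a common $Z$.

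The main obstacle is the enlargement step: guaranteeing that $\B'$ stays within a region where \eqref{eq:information-expansion} holds uniformly. In the signed case, the inner family $\B_{\mathcal J}(\eta)$ shifted by $\bu/n$ must remain in $\B_{\mathcal J}(2\eta)$, which requires $n\ge n_0(M)$.
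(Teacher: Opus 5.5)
Your treatment of the basic regions follows the paper's proof. The information expansion uses the same decomposition $W_{H_j,\bbeta}=W_{\bbeta}+D_{H_j,\bbeta}$, which the paper packages as \eqref{stein:joint-covariance}. The likelihood expansion integrates the Hessian in the same way along segments in a compact neighborhood, and the limit uses the same $L^2$ reduction to $W_{\bbeta}$.

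The gap is in the signed case. You assert that for $\btheta=(\bbeta,\bh)\in\B_{\mathcal J}(\eta)$ the points $\btheta+t\bu/n$ lie in $\B_{\mathcal J}(2\eta)$ once $n\ge n_0(M)$. By \eqref{eq:signed-family}, however, membership in $\B_{\mathcal J}(2\eta)$ requires the base part to lie in $\B$ itself. The direction $\bu$ ranges over all of $\mathbb R^{k+m}$, including the original coordinates. Since $\B$ is an arbitrary compact subset of $\mathcal S_+$ (it may be a single point), $\bbeta+t(u_1,\ldots,u_k)/n$ generally leaves $\B$. The margin from $\eta$ to $2\eta$ absorbs only the displacement of the added coefficients. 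Nor can you simply replace $\B$ by a compact neighborhood $\B'\subset\mathcal S_+$. Theorem~\ref{thm:signed} would then supply a radius $\eta'$ depending on $\B'$, possibly with $2\eta'<\eta$, so the inner family $\B_{\mathcal J}(\eta)$ would no longer be covered.

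The missing idea is to keep the ferromagnetic base $\bbeta\in\B$ fixed. Represent the displacement of the original coordinates by extra copies of $H_1,\ldots,H_k$ appended to $\mathcal J$, with signed coefficients $tu_j/n$. The total perturbation size is then at most $\eta+C_{\cH,\mathcal J}M/n\le2\eta$, as in \eqref{info:signed-path}. The radius in Theorem~\ref{thm:signed} depends only on $\B$ and $\cH$, not on the added family, so every such segment lies in the single outer family $\B_{(\mathcal J,\cH)}(2\eta)$. This re-parametrization leaves the Gibbs measure and the mean-field map unchanged. Hence your covariance computation for the original sufficient statistics goes through uniformly along the segment. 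So does the Lipschitz bound on $v\ba\ba^{\mathsf T}$, using the lower bound on $\rho$ from Lemma~\ref{lem:uniform-input}. With this fix, the rest of your argument is complete.
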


\begin{remark}\label{rem:local-information}
By \eqref{eq:edges} and \eqref{eq:information-expansion}, the information divided by $n^2$ converges to $2v_{\bbeta}\ba_{\bbeta}\ba_{\bbeta}^{\mathsf T}$, a matrix of rank one. Differentiating the fixed-point equation in \eqref{eq:meanfield} gives
\begin{equation}\label{eq:meanfield-response}
\nabla_{\bbeta}p_{\bbeta}=2v_{\bbeta}\ba_{\bbeta}.
\end{equation}
Thus a direction enters the limit in \eqref{eq:likelihood-limit} only through its first-order effect on the mean-field edge density. For a direction with $\ba_{\bbeta}^{\mathsf T}\bu=0$, we can also compare the two probability measures directly and obtain
\begin{equation}\label{eq:local-total-variation}
\frac12\sum_{y\in\{0,1\}^{\cI}}
|p_{\bbeta+\bu/n}(y)-p_{\bbeta}(y)|
\le C_Mn^{-1/2},\qquad |\bu|\le M,
\end{equation}
for the families and sufficiently large $n$ in Theorem~\ref{thm:likelihood}. We prove \eqref{eq:local-total-variation} at the end of the likelihood argument. This describes the information at the $n^{-1}$ parameter scale without precluding estimation at other scales, as in the two-star model of Mukherjee and Xu \cite{MX}. Related statistical questions arising from equal limiting edge densities are discussed by Chatterjee and Diaconis \cite{CD}, while Xu and Mukherjee \cite{XM2023} study limiting experiments for one-parameter Ising models on dense regular graphs. Here we use the quantitative covariance estimate to obtain the uniform likelihood remainder in \eqref{eq:likelihood-expansion} for multiparameter ERGMs.
\end{remark}

The same parameter-uniform fluctuation estimates also allow us to study tail probabilities through exponential tilting. At thresholds that grow with $n$, Cram\'er-type moderate deviation estimates control relative rather than absolute error. The Kolmogorov bound in \eqref{eq:main-normal} yields only $C/\{n\Phibar(x)\}$ for the relative error, so its usefulness decreases as the normal tail becomes small. We instead obtain a direct relative estimate with an explicit cubic correction in Theorem~\ref{thm:md}. Both tails must be treated, since a subgraph count need not have a distribution symmetric about its mean. In the present model, the two tilt directions can also have different effects on the energy coefficients. In particular, a lower-tail tilt may introduce a negative interaction, and we use Theorem~\ref{thm:signed} to keep the normal approximation and variance estimates uniform along this path.

For every parameter with a unique stable fixed point, define the cubic coefficients
\begin{equation}
\gamma_{\bbeta}=v_{\bbeta}^{3/2}\left\{\frac{1-2p_{\bbeta}}{p_{\bbeta}^2(1-p_{\bbeta})^2}
+2\Phi_{\bbeta}''(p_{\bbeta})\right\},\qquad
\gamma_{H,\bbeta}=\gamma_{\bbeta}+3(e_H-1)\frac{\sqrt{v_{\bbeta}}}{p_{\bbeta}}.
\label{eq:gamma}
\end{equation}
For the perturbed models \eqref{eq:signed-family}, we use \eqref{eq:gamma} with subscript $\btheta$. With this coefficient, we obtain the following expansion for both tails.

\begin{theorem}\label{thm:md}
Let $H$ be any fixed graph with at least one edge, and recall $N=\binom n2$ from \eqref{eq:edges}. For every fixed $L>0$, uniformly over $n\ge2$, $\epsilon\in\{-1,1\}$, $0\le x\le L\sqrt n$, and $\bbeta\in\B$ as in Theorem~\ref{thm:normal},
\begin{equation}
\frac{\Pp(\epsilon W_{H,\bbeta}>x)}{\Phibar(x)}
=\exp\left\{\frac{\epsilon\gamma_{H,\bbeta}x^3}{6\sqrt N}\right\}
\left[1+O\left(\frac{1+x^2}{n}+\frac{x^4}{N}\right)\right].
\label{eq:main-md}
\end{equation}
For the models of Theorem~\ref{thm:signed}, \eqref{eq:main-md} holds with $\bbeta$ replaced by $\btheta$, uniformly over $\btheta\in\B_{\mathcal J}(\eta)$. The constants depend only on the fixed parameter set, graph families, $H$ and $L$. For $n\ge v_H$, both conclusions remain valid under true-variance normalization, with the same cubic coefficient.
\end{theorem}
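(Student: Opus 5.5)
We prove \eqref{eq:main-md} by exponential tilting. Fix $\epsilon\in\{-1,1\}$ and write $S=|\Hom(H,G_y)|$. For $t$ in a window $|t|\le L'/\sqrt N$, consider the tilted measure
\[
\frac{d\Pp_t}{d\Pp_{\bbeta}}(y)=\frac{\exp\{\epsilon t\,n^{2-v_H}S(y)\}}{\E\exp\{\epsilon t\,n^{2-v_H}S\}} .
\]
This is again a model of the form \eqref{eq:signed-family}. We take $\mathcal J=(\cH,H)$ with the added coefficient $h=\epsilon t\sqrt N$, after stripping isolated vertices of $H$; these only contribute a deterministic factor of $n$.

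The only issue is that $h=O(1)$ need not be small. To handle this, we first restrict to $x\le\delta\sqrt n$ with $\delta$ small, so that $|h|\le\eta$ and Theorem~\ref{thm:signed} applies uniformly on $\B_{\mathcal J}(2\eta)$. For $\delta\sqrt n<x\le L\sqrt n$ we use the same argument along the path of tilted parameters. This is valid provided the tilted family stays in a compact set with a unique stable fixed point. I expect this range extension to be the main obstacle. The plan there is to cover the path by finitely many $\eta$-balls, recentering Theorem~\ref{thm:signed} at each point. This needs the mean-field fixed point to remain stable along the tilt, which should follow from monotonicity of the tilted fixed point in $h$ on the ferromagnetic side. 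If $H$ is not in the energy and the tilt is negative, the claim is only needed for bounded $h$.

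The proof then follows the standard Cramér scheme, in four steps.

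First, let $\Lambda(t)=\log\E\exp\{\epsilon t n^{2-v_H}(S-\mu_H)\}$. Its derivatives are the tilted mean, variance and third cumulant. By Theorem~\ref{thm:signed} the tilted variance equals $\sigma^2_{H,\btheta(t)}(1+O(1/n))$. Differentiating $\sigma^2_{H,\btheta}=4e_H^2p_{\btheta}^{2e_H-2}n^{2v_H-4}Nv_{\btheta}$ in $h$, using \eqref{eq:meanfield-response}, gives the third cumulant. A direct computation of $\partial_p\log(p^{2e_H-2}v)$ along the fixed-point curve should produce exactly $\gamma_{H,\bbeta}$ in \eqref{eq:gamma}. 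Here the term $3(e_H-1)\sqrt v/p$ comes from $p^{2e_H-2}$. The $v^{3/2}\{(1-2p)/(p^2(1-p)^2)+2\Phi''\}$ term comes from differentiating $v=p(1-p)/\rho$, with $\rho$ differentiated through $\Psi'$.

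Second, choose the saddle point $t_x$ so that the tilted mean of $\epsilon W_{H}$ equals $x$. Expanding $\Lambda$ to fourth order gives
\[
\Lambda(t_x)-t_x\Lambda'(t_x)=-\tfrac{x^2}{2}+\tfrac{\epsilon\gamma_H x^3}{6\sqrt N}+O\!\left(\tfrac{x^4}{N}+\tfrac{x^2}{n}\right).
\]
The $x^2/n$ error comes from the $O(1/n)$ relative variance error of Theorem~\ref{thm:signed}. The fourth cumulant bound $O(1/N)$ per unit requires Lipschitz control of the third cumulant in $h$, which we obtain by applying the same variance expansion with one more derivative.

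Third, write
\[
\Pp(\epsilon W_H>x)=e^{\Lambda-t\Lambda'}\,\E_t\bigl[e^{-\tau V_t}\ind\{V_t>0\}\bigr],
\]
where $V_t$ is the tilted standardized count and $\tau=t_x\times(\text{tilted sd})\approx x$. The Kolmogorov bound $O(1/n)$ from \eqref{eq:normal-signed}, combined with integration by parts, gives
\[
\E_t\bigl[e^{-\tau V_t}\ind\{V_t>0\}\bigr]=\E\bigl[e^{-\tau Z}\ind\{Z>0\}\bigr]+O(1/n).
\]
The Gaussian term is $e^{\tau^2/2}\Phibar(\tau)\asymp1/(1+\tau)$. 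The relative error is therefore $O((1+x)/n)$, which is within the claimed bound.

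Finally, replace $\tau$ by $x$ in $e^{\tau^2/2}\Phibar(\tau)$. This costs a relative error $O((1+x^2)/n+x^4/N)$. Combining with the second step yields \eqref{eq:main-md}.

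For true-variance normalization, $\widehat\sigma_H/\sigma_H=1+O(1/n)$ by Theorem~\ref{thm:normal}. This shifts $x$ by $O(x/n)$, which is absorbed into the error since $x\cdot x/n\le(1+x^2)/n$.
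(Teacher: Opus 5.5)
Your overall scheme matches the paper's. You tilt by the observed count, obtain the cumulant generating function from the tilted variance, use the $O(n^{-1})$ Kolmogorov bound at the tilted parameter to compare $\E_t[e^{-\tau V_t}\mathbf 1_{\{V_t>0\}}]$ with $e^{\tau^2/2}\Phibar(\tau)$, and then compare $\tau$ with $x$. Your exact saddle point $\Lambda'(t_x)=x$ is a legitimate alternative to the paper's quadratic centering, because it makes $V_t$ exactly the true-standardized count of the tilted model.

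There is, however, a scaling error, and it creates an obstacle that does not exist. The tilt $\exp\{\epsilon t n^{2-v_H}S\}$ adds exactly $\epsilon t$ to the coefficient of $n^{2-v_H}|\Hom(H,G_y)|$, not $\epsilon t\sqrt N$. Up to a constant factor it equals $\exp\{\epsilon\theta W_{H,\bbeta}\}$, with $\theta=t\,n^{2-v_H}\sigma_{H,\bbeta}=2e_Hp_{\bbeta}^{e_H-1}\sigma_{\bbeta}t\asymp nt$. The saddle point has $\theta\approx x\le L\sqrt n$, so the added coefficient is $O(x/n)=O(Ln^{-1/2})$ on the whole range. Coefficients of order one would correspond to $x\asymp n$, the large-deviation scale, where $x^4/N$ is not even small.

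So no split at $\delta\sqrt n$ is needed. Append $H$, stripped of isolated vertices, to $\mathcal J$. Since $\eta$ depends only on $\B$ and $\cH$, the tilt path from any base in $\B_{\mathcal J}(\eta)$ stays in $\B_{\mathcal J}(2\eta)$ once $n$ exceeds a threshold depending on $L$. This is exactly why the theorem is stated on the inner family.

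Had $O(1)$ tilts been needed, your covering argument would fail for three reasons:
\begin{itemize}
\item Theorem~\ref{thm:signed} can only be centered at compact subsets of $\mathcal S_+$. It therefore cannot be recentered where a lower-tail path has a negative $H$-coefficient.
\item Monotonicity of the fixed point in $h$ gives neither uniqueness nor $\rho>0$.
\item An $O(1)$ positive tilt can push a ferromagnetic model across the phase boundary.
\end{itemize}
You also invoke Theorem~\ref{thm:signed} for every base, but it does not cover compact subsets of $\mathcal D$ with negative coefficients. There you need instead that the displacement changes $D$ by at most $\tfrac12e_H(e_H-1)|h|=O(n^{-1/2})$. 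The path then stays in a compact subset of $\mathcal D$, where Theorem~\ref{thm:normal} applies.

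The second gap is the fourth-order expansion of $\Lambda$ with a cumulant bound. The third and fourth cumulants of the tilted count are the first two derivatives of the actual tilted variance. Theorem~\ref{thm:signed} gives only $\Var=\sigma_{H,\btheta}^2(1+r)$ with $|r|\le C/n$, and says nothing about $r'$ or $r''$. Differentiating the explicit scale $\sigma_{H,\btheta}^2$ once or twice therefore does not bound the true cumulants, so that step fails as written.

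The step is also unnecessary. Write $\Lambda(s)=\log\E e^{sW_{H,\bbeta}}$ for real $s$. Then $\Lambda''(s)=a(s)^2+O(n^{-1})$, where $a(s)^2$ is the explicit squared scale ratio; along the path it satisfies $(a^2)'(0)=\gamma_{H,\bbeta}/\sqrt N$ and $(a^2)''=O(N^{-1})$. Integrating twice from $\Lambda(0)=\Lambda'(0)=0$ gives, for both signs of $s$,
\begin{itemize}
\item $\Lambda'(s)=s+\gamma_{H,\bbeta}s^2/(2\sqrt N)+O(|s|/n+|s|^3/N)$;
\item $\Lambda(s)=s^2/2+\gamma_{H,\bbeta}s^3/(6\sqrt N)+O(s^2/n+s^4/N)$.
\end{itemize}
This is all your saddle-point computation needs, and it is what the paper does.

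Finally, when you compute $(a^2)'(0)$, note that the factor $p^{2e_H-2}$ contributes only $2(e_H-1)\sqrt{v_{\bbeta}}/p_{\bbeta}$. The remaining $(e_H-1)\sqrt{v_{\bbeta}}/p_{\bbeta}$ comes from the explicit dependence of $\rho$ on the tilt coefficient through the $H$-term of $\Phi'_{\btheta}$. Differentiating $v$ only through $p$, as your attribution suggests, would give the wrong cubic coefficient.
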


\begin{remark}
For Erd\H{o}s--R\'enyi subgraph counts, D\"oring and Eichelsbacher \cite{DE2013} established moderate deviation principles using cumulant bounds. Goldschmidt, Griffiths and Scott \cite[Theorems 1.1, 1.9 and 1.11]{GGS2020} obtained moderate deviation rates in $G(n,m)$ and upper-tail asymptotics in $G(n,p)$. Fang, Luo and Shao \cite[Proposition 2.3]{FLS2020} gave cubic-corrected relative bounds for both tails in the independent-edge model.

For the edge--two-star model, Mukherjee and Xu \cite[Section 1.3]{MX} express the edge count as an affine function of the total magnetization in an Ising model on the line graph of the complete graph. We are not aware of moderate deviation results for this count, either directly or through that Ising representation. To our knowledge, Theorem~\ref{thm:md} is the first general Cram\'er-type moderate deviation theorem for arbitrary fixed subgraph counts in dependent dense ERGMs of the form \eqref{eq:model}. The relative estimates with a cubic correction appear to be new even for edges in the original edge--triangle model with nonzero triangle interaction, within the stated parameter ranges.
\end{remark}

\begin{remark}\label{rem:curvature}
The explicit variance and cubic coefficient can also be read from the scalar mean-field potential
\[
J_{\bbeta}(u)=u\log u+(1-u)\log(1-u)
-2\sum_{j=1}^k\beta_j u^{e_j},\qquad 0<u<1.
\]
By the fixed-point equation in \eqref{eq:meanfield}, $J_{\bbeta}'(p_{\bbeta})=0$. Differentiating and using \eqref{eq:edge-scale} and \eqref{eq:gamma}, we obtain
\begin{equation}\label{eq:potential-curvature}
J_{\bbeta}''(p_{\bbeta})=v_{\bbeta}^{-1},\qquad
-v_{\bbeta}^{3/2}J_{\bbeta}^{(3)}(p_{\bbeta})=\gamma_{\bbeta}.
\end{equation}
The homogeneous mean-field description associates edge density $u$ with subgraph density $u^{e_H}$. We therefore set $s=u^{e_H}$ and define $\widetilde J_{H,\bbeta}(s)=J_{\bbeta}(s^{1/e_H})$. Applying the chain rule at $s=p_{\bbeta}^{e_H}$ and using \eqref{eq:potential-curvature}, we have
\begin{equation}\label{eq:subgraph-curvature}
-\frac{\widetilde J_{H,\bbeta}^{(3)}(p_{\bbeta}^{e_H})}
{\{\widetilde J_{H,\bbeta}''(p_{\bbeta}^{e_H})\}^{3/2}}
=\gamma_{\bbeta}+3(e_H-1)\frac{\sqrt{v_{\bbeta}}}{p_{\bbeta}}
=\gamma_{H,\bbeta}.
\end{equation}
By \eqref{eq:subgraph-curvature}, we can interpret the extra term in \eqref{eq:gamma} as the change in the standardized third derivative under the coordinate transformation $s=u^{e_H}$. This calculation concerns the scalar mean-field potential. The probabilistic role of the coefficient is established in Lemma~\ref{md:cgf}, where the expansion of the tilted variance leads to the correction appearing in the tail estimate.
\end{remark}

By expanding the cubic factor in \eqref{eq:main-md}, we obtain the following estimate without the correction.

\begin{corollary}\label{cor:ordinary}
Under the assumptions and in either parameter family of Theorem~\ref{thm:md}, for every fixed $L>0$ and both signs $\epsilon$,
\begin{equation}
\frac{\Pp(\epsilon W_{H,\bbeta}>x)}{\Phibar(x)}
=1+O\left(\frac{1+x^3}{n}\right),\qquad 0\le x\le Ln^{1/3},
\label{eq:ordinary}
\end{equation}
with $\bbeta$ replaced by $\btheta$ in the perturbed family. The same assertions hold under true-variance normalization for $n\ge v_H$.
\end{corollary}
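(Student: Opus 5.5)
We derive the statement from Theorem~\ref{thm:md} by expanding the cubic factor, treating the two regimes $x\le1$ and $1<x\le Ln^{1/3}$ uniformly. Since $Ln^{1/3}\le L\sqrt n$ for all $n\ge1$, Theorem~\ref{thm:md} applies with the same $L$, uniformly in $\epsilon$ and in the parameter family considered (including $\btheta\in\B_{\mathcal J}(\eta)$ and true-variance normalization for $n\ge v_H$). So we have
\[
\frac{\Pp(\epsilon W_{H,\bbeta}>x)}{\Phibar(x)}
=\exp\Bigl\{\frac{\epsilon\gamma_{H,\bbeta}x^3}{6\sqrt N}\Bigr\}\bigl[1+R\bigr],
\qquad |R|\le C\Bigl(\frac{1+x^2}{n}+\frac{x^4}{N}\Bigr).
\]

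\textbf{Step 1: bound the cubic coefficient uniformly.} On a compact subset of $\mathcal S_+$ or $\mathcal D$, or on $\B_{\mathcal J}(2\eta)$, the fixed point $p_{\bbeta}$ lies in a compact subinterval of $(0,1)$ and $\rho_{\bbeta}$ is bounded below. This follows from continuity of the fixed point in the parameter, which the implicit function theorem gives since $\rho>0$, together with the uniqueness statement of Theorem~\ref{thm:signed}. Hence $v_{\bbeta}$, $\Phi''_{\bbeta}(p_{\bbeta})$ and therefore $|\gamma_{H,\bbeta}|$ are bounded by a constant $\Gamma$. The only point requiring care is this uniformity. It can be taken from whatever continuity or compactness argument underlies Theorem~\ref{thm:signed}. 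Otherwise, one argues directly: the map $\btheta\mapsto p_{\btheta}$ is continuous on the closed set $\B_{\mathcal J}(2\eta)$, which is compact because $\B$ is compact and the $h$-constraint is bounded, and $\rho$ is continuous and positive there.

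\textbf{Step 2: expand the exponential.} Since $\sqrt N\asymp n$, we have $t:=\epsilon\gamma_{H,\bbeta}x^3/(6\sqrt N)$ with $|t|\le C x^3/n\le CL^3$ for $x\le Ln^{1/3}$. On the bounded set $|t|\le CL^3$, $|e^t-1|\le C_L|t|$, so $e^t=1+O(x^3/n)$. Next, $x^4/N\le Cx^4/n^2\le C L\,x^3 n^{1/3}/n^2\le C x^3/n$, and $(1+x^2)/n\le 2(1+x^3)/n$. Hence $R=O((1+x^3)/n)$, and $R$ is bounded because $x^3/n\le L^3$.

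\textbf{Step 3: combine.} Multiply the two factors: $(1+O(x^3/n))(1+O((1+x^3)/n))=1+O((1+x^3)/n)$. Both error terms are bounded, so their product is absorbed into the same order. This gives \eqref{eq:ordinary}, with constants depending only on the parameter set, the graph families, $H$ and $L$. The same argument applies verbatim in the perturbed family and under true-variance normalization, since Theorem~\ref{thm:md} asserts the identical expansion with the same cubic coefficient there.
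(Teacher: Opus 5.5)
Your proposal is correct and is essentially the paper's argument. You restrict Theorem~\ref{thm:md} to $0\le x\le Ln^{1/3}\subset[0,L\sqrt n]$, then use $N\asymp n^2$ and the uniform boundedness of $\gamma_{H,\bbeta}$ to see that the cubic exponent is bounded and $O(x^3/n)$, and you absorb $(1+x^2)/n+x^4/N$ into $C_L(1+x^3)/n$. Your Step~1 only spells out the compactness justification that the paper cites in one clause, via \eqref{eq:compact} and Lemma~\ref{lem:uniform-input}.
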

\begin{proof}[Proof of Corollary~\ref{cor:ordinary}]
By \eqref{eq:edges}, we have $n^2/4\le N\le n^2/2$ for $n\ge2$. Hence, for $x\le Ln^{1/3}$, the cubic exponent in \eqref{eq:main-md} is bounded and has absolute value $O(x^3/n)$, since the coefficients in \eqref{eq:gamma} are uniformly bounded. Using $|e^u-1|\le e^{|u|}|u|$ and $(1+x^2)/n+x^4/N\le C_L(1+x^3)/n$ in \eqref{eq:main-md}, we obtain \eqref{eq:ordinary}.
\end{proof}

\par\smallskip\noindent\textit{Proof overview.}\quad Building on the ERGM analysis of Fang, Liu, Shao and Zhao \cite{FLSZ}, we combine the normal Stein equation with the conditional identity \eqref{stein:kol-decomposition} to express the normal approximation error. To control the centered remainder, we apply a second normal Stein equation as in Fang and Liu \cite{FL2025}, keeping the remainder inside the expectation to obtain \eqref{stein:second-identity}. The Poincar\'e inequality and covariance bound \eqref{eq:poincare}--\eqref{eq:covariance-bound} control the resulting moments and weighted sum. Combining these estimates with the remainder relation \eqref{struct:remainder-transfer}, we obtain Theorem~\ref{thm:residual}. For normal approximation, these estimates control the remainder term and the deviation of $K_H$ from one, where $K_H$ is the coefficient multiplying the Stein solution's derivative. The remaining discrete Taylor error is bounded using increment estimates and, for indicator tests, the local concentration bound \eqref{stein:local-concentration}. Comparing the perturbed Glauber dynamics with two ferromagnetic chains makes these bounds uniform under small signed perturbations. We can then integrate sufficient-statistic covariances for local likelihood ratios and tilted variances for moderate deviations.

Two ideas from this argument may be useful in other approximation problems with dependence. First, a remainder's absolute moment need not reflect the size of its contribution to a distributional approximation. Retaining its expectation against the Stein solution allows us to use centering and another Stein equation before taking absolute values. In the present setting, the resulting identity \eqref{stein:second-identity} involves quantities controlled by \eqref{stein:rh-moments}--\eqref{stein:rh-weighted}. Combining these bounds with \eqref{struct:remainder-transfer} and \eqref{stein:linear}, we obtain \eqref{eq:residual-bounds}. Second, exponential tilting converts normalized exponentially weighted moments into ordinary moments at a shifted parameter, as in \eqref{md:weighted-moment-transfer}. This is particularly useful for dependent exponential families when moment and normal approximation bounds hold uniformly along the tilt path. Accordingly, we use the classical change of measure of Petrov \cite[Chapter VIII]{Petrov} and a tilted Berry--Esseen bound, rather than derive a relative tail estimate directly from a Stein identity as in Chen, Fang and Shao \cite[Theorem 3.1]{CFS2013}. Integrating the tilted variance then determines the correction in \eqref{md:cgfexpansion}, while \eqref{md:weightedbound} controls the tail integral. This separation also suits higher-order corrections when the variance expansion and tilted normal approximation have the required precision.

\section{Proofs of Theorems~\ref{thm:residual}--\ref{thm:signed}}\label{sec:stein}

The comparison with the edge count and the normal approximation use the same estimates for the remainder $R_H$ and weight $K_H$ defined in \eqref{stein:rh}--\eqref{stein:kh} below. We state these estimates first and prove Theorem~\ref{thm:residual} in Section~\ref{sec:residual-proof}. Section~\ref{sec:normal-proof} adds the threshold estimate needed for indicators and proves Theorems~\ref{thm:normal} and \ref{thm:signed}. Section~\ref{sec:joint-consequences} contains the joint consequences. Apart from the conditional identity, the auxiliary lemmas are proved in Appendix~\ref{sec:normal-lemmas} using Appendix~\ref{sec:ferro}.

To treat the basic parameter regions and the signed perturbations in one argument, let $\B$ denote either a compact family from Theorem~\ref{thm:normal} or an enlarged family $\B_{\mathcal J}(2\eta)$ from \eqref{eq:signed-family}. In the latter case, we use $\bbeta$ for the full perturbed parameter and choose $\eta$ as in Lemma~\ref{lem:uniform-input}. That lemma establishes the required fixed-point and contraction bounds independently of the theorems proved in this section.

\subsection{Auxiliary lemmas for Theorem~\ref{thm:residual}}\label{sec:aux-estimates}

Fix $\bbeta\in\B$ and, for convenience, let $Y=Y_{\bbeta}$, $p=p_{\bbeta}$, $\rho=\rho_{\bbeta}$, $\sigma=\sigma_{\bbeta}$ and $\widetilde p=\widetilde p_{\bbeta}$. We first take an observed graph $H$ without isolated vertices and sufficiently large $n$. Section~\ref{sec:residual-proof} treats the other graphs, sample sizes and normalizations. Let $v=v_H$ and $e=e_H$, and define
\begin{equation}
f_H(y)=\frac{|\Hom(H,G_y)|-\mu_{H,\bbeta}}
{2ep^{e-1}n^{v-2}\sigma},\qquad V=f_H(Y)=W_{H,\bbeta}.
\label{stein:fh}
\end{equation}
By the definition of the true mean in \eqref{eq:subgraph-scale}, $\E V=0$.

For $y\in\{0,1\}^{\cI}$, $s\in\cI$ and $b\in\{0,1\}$, let $y^{(s,b)}$ be obtained by setting coordinate $s$ to $b$ and leaving all other coordinates unchanged. Define
\begin{equation}
\Delta_sF(y)=F(y^{(s,1)})-F(y^{(s,0)}),\qquad
\Delta_{sr}F=\Delta_s\Delta_rF.
\label{eq:force}
\end{equation}
For distinct $r,s$, the coordinate replacements commute, so $\Delta_{sr}F=\Delta_{rs}F$. Since $\Delta_sF$ does not depend on $y_s$, we also have $\Delta_{ss}F=0$. Higher-order differences are defined by composition.

Let $y_{-s}=(y_r)_{r\ne s}$ and use the corresponding notation for random configurations. Conditioning on $Y_{-s}$ means conditioning on the $\sigma$-field generated by the other edge coordinates. The map $y\mapsto F(y^{(s,b)})$ remains a function on the original configuration space, and its expectation at $Y$ is always under the original measure.

The local field and single-edge conditional probability are
\begin{equation}
L_{s,\bbeta}(y)=\Delta_sT_{\bbeta}(y),\qquad
q_{s,\bbeta}(y)=\Psi(L_{s,\bbeta}(y))
=\Pp(Y_{s,\bbeta}=1\mid Y_{-s,\bbeta}=y_{-s}).
\label{eq:conditional}
\end{equation}
For fixed $\bbeta$, write $L_i=L_{i,\bbeta}$ and $q_i=q_{i,\bbeta}$, using the edge ordering $1,\ldots,N$. Define
\begin{align}
R_H(y)&=\sum_{i=1}^N\{y_i-q_i(y)\}-\rho\sigma f_H(y),\label{stein:rh}\\
K_H(y)&=\frac1{\rho\sigma}\sum_{i=1}^Nq_i(y)(1-q_i(y))\Delta_if_H(y).
\label{stein:kh}
\end{align}
The remainder $R_H$ enters the conditional identity below, whereas $D_{H,\bbeta}$ in \eqref{eq:residual-definition} is the difference between normalized counts. Subtracting \eqref{stein:rh} for $H$ and $K_2$ connects these quantities, as expressed in \eqref{struct:remainder-transfer}. We first record the conditional identity, together with the centering of $R_H$ and the nonnegativity of $K_H$.
\begin{lemma}\label{stein:conditional}
For every locally absolutely continuous $F:\mathbb R\to\mathbb R$, with finite values assigned to $F'$ at points where the derivative does not exist,
\begin{equation}\label{stein:kol-decomposition}
\begin{aligned}
\E[VF(V)]-\E F'(V)
&=\frac1{\rho\sigma}\sum_{i=1}^N\E\bigl[q_i(Y)(1-q_i(Y))\{\Delta_i(F\circ f_H)(Y)-\Delta_if_H(Y)F'(V)\}\bigr]\\
&\quad+\E[(K_H(Y)-1)F'(V)]-\frac{\E[R_H(Y)F(V)]}{\rho\sigma}\\
&\coloneqq I_1(F)+I_2(F)+I_3(F).
\end{aligned}
\end{equation}
Here $I_1(F)$, $I_2(F)$ and $I_3(F)$ denote the three terms in their order of appearance, with the minus sign included in $I_3(F)$. Moreover, the functions in \eqref{stein:rh}--\eqref{stein:kh} satisfy $\E R_H(Y)=0$ and $K_H\ge0$.
\end{lemma}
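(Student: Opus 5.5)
The key identity is the exchangeable-pair / Glauber conditional identity: for any function $G$ of the configuration, conditioning on $Y_{-i}$ gives
\[
\E\bigl[(Y_i-q_i(Y))\,G(Y)\bigr]=\E\bigl[q_i(Y)(1-q_i(Y))\,\Delta_iG(Y)\bigr].
\]
To verify it, note that given $Y_{-i}=y_{-i}$, the coordinate $Y_i$ is Bernoulli$(q_i(y))$ by \eqref{eq:conditional}, and $q_i$ does not depend on $y_i$. Then
\[
\E[(Y_i-q_i)G\mid Y_{-i}]=q_i(1-q_i)G(y^{(i,1)})+(1-q_i)(-q_i)G(y^{(i,0)})=q_i(1-q_i)\Delta_iG .
\]
Taking expectations proves the identity.

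We apply the identity with $G=F\circ f_H$. If $F$ grows too fast for integrability this is harmless, since everything is a finite sum over a finite configuration space. Summing over $i$ yields
\[
\E\Bigl[\sum_i(Y_i-q_i)F(V)\Bigr]=\sum_i\E\bigl[q_i(1-q_i)\Delta_i(F\circ f_H)\bigr].
\]
By \eqref{stein:rh}, $\sum_i(Y_i-q_i)=\rho\sigma V+R_H(Y)$. Dividing by $\rho\sigma$ gives
\[
\E[VF(V)]+\frac{\E[R_HF(V)]}{\rho\sigma}=\frac1{\rho\sigma}\sum_i\E\bigl[q_i(1-q_i)\Delta_i(F\circ f_H)\bigr].
\]
On the right we add and subtract $\Delta_if_H\,F'(V)$. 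By \eqref{stein:kh}, the added part gives $\E[K_HF'(V)]$. Subtracting $\E F'(V)$ from both sides and rearranging then gives exactly \eqref{stein:kol-decomposition}, with $I_1,I_2,I_3$ as displayed. Local absolute continuity of $F$ is used only so that $F'(V)$ is a well-defined finite value. The identity itself is purely algebraic, so the arbitrary values assigned to $F'$ where the derivative fails to exist cause no trouble.

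For the centering, take $F\equiv1$, so that $F'=0$ and $\Delta_i(F\circ f_H)=0$. The basic identity gives $\E\sum_i(Y_i-q_i)=0$, and $\E V=0$ by the centering at the true mean. Hence $\E R_H=0$.

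For $K_H\ge0$, note first that $q_i(1-q_i)\ge0$ and $\rho>0$. Moreover, $\Delta_if_H\ge0$: since $|\Hom(H,G_y)|$ in \eqref{eq:count} is a sum of products of coordinates, adding an edge cannot decrease it. The normalizing constant $2ep^{e-1}n^{v-2}\sigma$ is positive because $p\in(0,1)$ and $\sigma>0$.

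The only point needing care is that $\rho>0$ and $p\in(0,1)$ throughout $\B$, including the perturbed families. This is supplied by the definitions of $\mathcal S_+$ and $\mathcal D$, together with Lemma~\ref{lem:uniform-input} for $\B_{\mathcal J}(2\eta)$. Otherwise the argument is routine.
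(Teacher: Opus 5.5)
Your proposal is correct and matches the paper's proof step for step: the conditional Bernoulli identity is summed over edges and combined with the definition of $R_H$, followed by the add-and-subtract of $\E[K_H(Y)F'(V)]$. Centering comes from the constant test function and $K_H\ge0$ from the monotonicity of the injective counts. The only difference is cosmetic: the paper rederives $q_i=\Psi(L_i)$ from the Gibbs ratio, whereas you take it from \eqref{eq:conditional}.
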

\begin{proof}[Proof of Lemma~\ref{stein:conditional}]
By \eqref{eq:model}, for configurations with the same $y_{-i}$, we have
\[
\frac{p_{\bbeta}(y^{(i,1)})}{p_{\bbeta}(y^{(i,0)})}
=\exp\{T_{\bbeta}(y^{(i,1)})-T_{\bbeta}(y^{(i,0)})\}=e^{L_i(y)}.
\]
This proves the conditional probability formula in \eqref{eq:conditional}. Now fix a configuration function $A$ and condition on $Y_{-i}$. Let $q=q_i(Y)$ and $A_b=A(Y^{(i,b)})$. These quantities are fixed under the conditioning. Since $Y_i$ has only two possible values, we have
\[
\E[(Y_i-q)A(Y)\mid Y_{-i}]
=q(1-q)A_1-(1-q)qA_0=q(1-q)(A_1-A_0).
\]
Taking expectations, we obtain
\begin{equation}
\E[(Y_i-q_i(Y))A(Y)]
=\E[q_i(Y)(1-q_i(Y))\Delta_i A(Y)].
\label{stein:bernoulli}
\end{equation}
Summing \eqref{stein:bernoulli} over $i$ and using \eqref{stein:rh}, we obtain
\begin{equation}
\rho\sigma\mathbb E[V A(Y)]+\mathbb E[R_H(Y)A(Y)]
=\sum_{i=1}^N \mathbb E[q_i(Y)(1-q_i(Y))\Delta_iA(Y)].
\label{stein:target-identity}
\end{equation}
Taking $A=1$ and using $\E V=0$, we obtain $\E R_H(Y)=0$. The nonnegative increments in \eqref{eq:count} also imply $K_H\ge0$ by \eqref{stein:kh}. Finally, set $A=F\circ f_H$ in \eqref{stein:target-identity}, divide by $\rho\sigma$, and add and subtract $\E[K_H(Y)F'(V)]$. By \eqref{stein:kh}, the resulting identity is \eqref{stein:kol-decomposition}.
\end{proof}

We now state the three estimates needed for the comparison in Theorem~\ref{thm:residual}. The first controls changes in $f_H$ when one or two edges are replaced. It also gives the $L^2$ approximation by the normalized edge count, which we will use to change the argument of a test function.

\begin{lemma}\label{stein:increments}
For the fixed graph $H$, we have the following estimates uniformly in the parameters under consideration.
\begin{gather}
0\le\Delta_if_H(y)\le Cn^{-1},\label{stein:increment-size}\\
\left|\mathbb E\Delta_if_H(Y)-\sigma^{-1}\right|\le Cn^{-2},
\qquad\operatorname{Var}(\Delta_if_H(Y))\le Cn^{-3},\label{stein:increment-moments}\\
\|\Delta_{ir}f_H\|_\infty\le
\begin{cases}
Cn^{-2},&i\ne r,\ |i\cap r|=1,\\
Cn^{-3},&i\cap r=\varnothing,\\
0,&i=r,
\end{cases}\label{stein:second-increment}\\
\max_i\sum_{r=1}^N \|\Delta_{ir}f_H\|_\infty\le Cn^{-1},
\qquad\max_i\sum_{r=1}^N \|\Delta_{ir}f_H\|_\infty^2\le Cn^{-3}.\label{stein:increment-sums}
\end{gather}
Furthermore, we have
\begin{equation}
\|W_{H,\bbeta}-W_{\bbeta}\|_2\le Cn^{-1/2},
\qquad \mathbb E V^2\le C.
\label{stein:linear}
\end{equation}
\end{lemma}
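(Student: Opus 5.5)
The plan is to separate the deterministic counting estimates \eqref{stein:increment-size}, \eqref{stein:second-increment} and \eqref{stein:increment-sums} from the probabilistic ones \eqref{stein:increment-moments} and \eqref{stein:linear}. The probabilistic ones I would obtain from the Poincar\'e inequality \eqref{eq:poincare} of Appendix~\ref{sec:ferro}, in the form $\Var F(Y)\le C\sum_{r}\E(\Delta_rF(Y))^2$, together with the edge-moment estimates there. Throughout, $H$ has no isolated vertices and $n$ is large, as in this subsection.

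\emph{Deterministic part.} By \eqref{eq:count}, $\Delta_i|\Hom(H,G_y)|$ is a sum over injections $\iota$ with $i\in\iota(\mathcal E(H))$ of products of the remaining $e-1$ indicators. Hence it is nonnegative and at most $2e\,n^{v-2}$. Dividing by $2ep^{e-1}n^{v-2}\sigma$ and using $\sigma\asymp n$ uniformly (from the uniform bounds on $p$ and $\rho$) gives \eqref{stein:increment-size}. For $r\ne i$, $\Delta_{ir}$ counts injections whose image edge set contains both $i$ and $r$.
\begin{itemize}
\item If $|i\cap r|=1$, three vertices are fixed, so there are $O(n^{v-3})$ such injections.
\item If $i\cap r=\varnothing$, four vertices are fixed, so there are $O(n^{v-4})$.
\end{itemize}
After normalization this gives \eqref{stein:second-increment}; and $\Delta_{ii}=0$ was noted above. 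Since there are $O(n)$ edges $r$ meeting $i$ and $O(n^2)$ disjoint from it, the sums are $O(n\cdot n^{-2}+n^2n^{-3})=O(n^{-1})$ and $O(n\cdot n^{-4}+n^2n^{-6})=O(n^{-3})$. This is \eqref{stein:increment-sums}.

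\emph{Variance and linear approximation via Poincar\'e.} Applying \eqref{eq:poincare} to $F=\Delta_if_H$ gives
\[
\Var(\Delta_if_H(Y))\le C\sum_r\|\Delta_{ri}f_H\|_\infty^2\le Cn^{-3}
\]
by \eqref{stein:increment-sums}. Applying it to $f_H$ gives $\E V^2=\Var V\le CN n^{-2}\le C$. For the first bound in \eqref{stein:linear}, note that $W_{\bbeta}=f_{K_2}(Y)$ with $\Delta_iW_{\bbeta}=\sigma^{-1}$, and that $\E(V-W_{\bbeta})=0$. Therefore
\[
\E(V-W_{\bbeta})^2\le C\sum_i\E\bigl(\Delta_if_H(Y)-\sigma^{-1}\bigr)^2
\le CN\bigl(\Var(\Delta_if_H)+(\E\Delta_if_H-\sigma^{-1})^2\bigr)\le CN(n^{-3}+n^{-4}),
\]
which is $O(n^{-1})$. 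So everything reduces to the mean estimate in \eqref{stein:increment-moments}.

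\emph{Mean of the increment (main obstacle).} Write $\E\Delta_if_H(Y)$ as $(2ep^{e-1}n^{v-2}\sigma)^{-1}$ times a sum over injections mapping some edge of $H$ onto $i$. There are $2e(n-2)_{v-2}=2en^{v-2}(1+O(n^{-1}))$ such injections. For each one, the summand is $\E\prod_{b}Y_{\iota(b)}$ over the other $e-1$ image edges. It therefore suffices to show
\[
\E\prod_{s\in S}Y_s=p^{|S|}+O(n^{-1})
\]
uniformly over sets $S$ of at most $e$ distinct edges and over the parameter set. The total relative error is then $O(n^{-1})$, and with $\sigma^{-1}\asymp n^{-1}$ this yields the $Cn^{-2}$ bound. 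I would derive this moment estimate from two appendix inputs. The first is $|\widetilde p-p|\le C/n$, obtained by averaging $\E[Y_s-q_s(Y)]=0$ and Taylor-expanding $q_s$ around the mean-field density using the concentration of edge-neighbourhood counts. The second is the covariance bound \eqref{eq:covariance-bound}, which shows that mixed cumulants of edge indicators are $O(n^{-1})$. Expanding the product in centered variables then gives the claim. The delicate point is uniformity, especially on the signed family $\B_{\mathcal J}(2\eta)$. There I would rely on Lemma~\ref{lem:uniform-input}, which provides the uniform fixed-point and contraction constants, so that the Poincar\'e constant, $\rho^{-1}$ and the $O(n^{-1})$ bounds on $|\widetilde p-p|$ and the cumulants all hold with a single constant.
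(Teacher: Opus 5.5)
Your proposal is correct and follows the paper's proof. You obtain the deterministic bounds from the same embedding counts, the variance and second-moment bounds from \eqref{eq:poincare}, and the mean, as in the paper, from the product-moment estimate \eqref{loc:producteq} together with \eqref{loc:meanprecision}. The only variation is that you apply \eqref{eq:poincare} directly to the centered difference $V-W_{\bbeta}$, whose increments are $\Delta_if_H-\sigma^{-1}$, whereas the paper first subtracts $\sum_ic_i(y_i-\widetilde p)$ and then uses vertex symmetry and $\|S(Y)\|_2\le Cn$; your version is slightly shorter and does not need the symmetry. If you rederive \eqref{loc:producteq} yourself, note that \eqref{eq:covariance-bound} must be applied to $q_s$ through $\Cov(Y_s,G(Y))=\Cov(q_s(Y),G(Y))$ for $G$ not depending on $y_s$, since $Y_s$ itself has unit increment and would give only an $O(1)$ bound.
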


The $L^2$ estimate in \eqref{stein:linear} controls the difference between the two counts, but its expected product with a Lipschitz function requires more information. We therefore turn to $R_H$ and $K_H$. The following bounds control $I_2(F)$ in \eqref{stein:kol-decomposition} and provide the moment estimates used to handle $I_3(F)$ with a second Stein equation.

\begin{lemma}\label{stein:remainder}
Uniformly over $\bbeta\in\B$, we have
\begin{align}
\mathbb ER_H(Y)^2\le Cn,\qquad
\sum_{i=1}^N \mathbb E|\Delta_iR_H(Y)|&\le Cn^{3/2},\label{stein:rh-moments}\\
0\le K_H(y)\le C,\qquad
\|K_H(Y)-1\|_2&\le Cn^{-1},\label{stein:kh-moments}\\
\mathbb E\left|\sum_{i=1}^N q_i(Y)(1-q_i(Y))\Delta_iR_H(Y)\right|&\le Cn.
\label{stein:rh-weighted}
\end{align}
\end{lemma}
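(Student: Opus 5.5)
The plan is to expand $R_H$ around the mean-field point and to control the fluctuations with the Poincar\'e inequality and covariance bound for the Glauber dynamics (Appendix~\ref{sec:ferro}), together with the increment bounds of Lemma~\ref{stein:increments}.

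\emph{Linearization of $R_H$.} First I linearize $q_i$. Write $L_i(y)=2\Phi_{\bbeta}(\bar u_i(y))+O(n^{-1})$, where $\bar u_i$ is a local density: a weighted average of subgraph densities rooted at edge $i$, equal to $p$ at the mean-field point. Taylor expanding $\Psi$ around $2\Phi_{\bbeta}(p)$ and using $\varphi_{\bbeta}'(p)=2p(1-p)\Phi_{\bbeta}'(p)$ gives
\[
\sum_i\{y_i-q_i(y)\}=(E-\mu)-\varphi_{\bbeta}'(p)\,(E-\mu)+Q(y)+c_n ,
\]
where $Q$ is a quadratic (and higher) polynomial in the centered densities and $c_n$ is a deterministic constant. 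The linear part equals $\rho\sigma W_{\bbeta}$. Hence
\[
R_H=\rho\sigma(W_{\bbeta}-V)+Q+c_n ,
\]
and $\E R_H=0$ fixes $c_n=-\E Q$. By \eqref{stein:linear}, $\rho\sigma\|W_{\bbeta}-V\|_2\le Cn\cdot n^{-1/2}$. For $Q$, I use the Poincar\'e inequality: each centered rooted density has variance $O(n^{-1})$, so summing $N$ squared fluctuations gives $\Var(Q)\le C n$. Together these give $\E R_H^2\le Cn$.

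\emph{Increments of $R_H$.} I have $\Delta_iR_H=1-\sum_r\Delta_iq_r-\rho\sigma\Delta_if_H$. Since $\Delta_i L_r$ is of order $n^{-1}$ when $|i\cap r|=1$ and $n^{-2}$ otherwise, and $\rho\sigma\Delta_if_H=\rho+O(n\cdot n^{-1}\cdot|\text{fluct}|)$ by \eqref{stein:increment-moments}, the constant parts cancel: $1-\varphi'_{\bbeta}(p)-\rho=0$. What remains is a centered local fluctuation with $L^1$ norm $O(n^{-1/2})$. Summing over the $N$ edges gives $Cn^{3/2}$.

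\emph{Weight $K_H$.} The bound $0\le K_H\le C$ follows from \eqref{stein:increment-size} and $\rho\sigma\asymp n$. For $\|K_H-1\|_2$, write $q_i(1-q_i)=p(1-p)+O(\text{fluct}_i)$ and $\Delta_if_H=\sigma^{-1}+(\Delta_if_H-\E\Delta_if_H)+O(n^{-2})$. The deterministic part is $Np(1-p)/(\rho\sigma^2)=1$, using $\sigma^2=Np(1-p)/\rho$, up to $O(n^{-1})$ corrections from $\widetilde p-p$. The centered part is $(\rho\sigma)^{-1}\sum_i$ of local fluctuations. Its variance is controlled by the covariance bound, which makes distinct-edge covariances $O(n^{-2})$ for disjoint edges. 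This gives $\Var\le n^{-2}\cdot n^{-2}\cdot(N\cdot n^{-1}\cdot\ldots)$, and I expect the total to be $O(n^{-2})$.

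\emph{Weighted sum, the main obstacle.} The bound \eqref{stein:rh-weighted} is the step I expect to be hardest: a crude use of the previous bound gives $n^{3/2}$, not $n$. I would exploit cancellation. Writing $\Delta_iR_H=G_i+(\text{higher order})$ with $G_i$ linear in the centered densities, the sum $\sum_ip(1-p)G_i$ is a global centered statistic, essentially $\Delta$ applied to the aggregate. Its variance, computed through Poincar\'e applied once more, should be $O(n^2)$. The replacement of $q_i(1-q_i)$ by $p(1-p)$ produces products of two fluctuations, each of size $n^{-1/2}$, so summing over $N$ edges contributes $O(n)$. Uniformity over $\B$, including the signed perturbations, comes from the uniform Poincar\'e and covariance constants supplied by Lemma~\ref{lem:uniform-input}.
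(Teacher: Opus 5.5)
\textbf{Verdict: genuine gap in \eqref{stein:rh-weighted}.}

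Your first two bounds are essentially fine. Your $Q+c_n$ is exactly the edge-count remainder $R=R_{K_2}$, so $R_H=R-\rho\sigma(V-W_{\bbeta})$. With Poincar\'e applied to $R$ itself (summing $N$ squared fluctuations bounds $\E|Q|$, not $\Var(Q)$), $\E R_H^2\le Cn$ and $\sum_i\E|\Delta_iR_H|\le Cn^{3/2}$ follow.

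The problem is the weighted sum. By vertex transitivity, $\E\Delta_iR_H$ and $\E[q_i(1-q_i)]\ge c$ do not depend on $i$, so
\[
\E\sum_{i=1}^Nq_i(1-q_i)\Delta_iR_H=N\,\E[q_1(1-q_1)]\,\E\Delta_1R_H+\sum_{i=1}^N\Cov\bigl(q_i(1-q_i),\Delta_iR_H\bigr).
\]
The covariance sum is $O(n)$, so the target $Cn$ forces $|\E\Delta_iR_H|\le C/n$. There is no cancellation over $i$, and no variance argument can replace this bound.

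Your increment step only shows that $\Delta_iR_H$ has $L^1$ norm $O(n^{-1/2})$. Calling the remainder ``centered'' assumes exactly what must be proved. The obstruction is the second-order term $\Psi''(\ell_0)a\zeta$ in $\Delta_iq_j=\Psi(\ell_0+\zeta+a)-\Psi(\ell_0+\zeta)$, where $a=\Delta_{ij}T$ and $\zeta=L_j(y^{(i,0)})-\ell_0$.

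Your heuristic that such terms are products of two fluctuations of size $n^{-1/2}$ fails here. For the edge--triangle energy and adjacent $i,j$, $a(y)=6\beta_2y_k/n$ depends only on the edge $k$ closing the triangle on $i\cup j$. Its fluctuation is therefore as large as $A_{ij}$ itself. Cauchy--Schwarz, or $\|a\|_\infty\E|\zeta|$, gives only $|\E[a\zeta]|\le CA_{ij}n^{-1/2}$. Summed over $j$ this is $O(n^{-1/2})$ per edge, hence $O(n^{3/2})$ for the weighted sum.

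\textbf{The missing idea is the argument of Lemma~\ref{resp:mean}.}
\begin{itemize}
\item Write $\E[a\zeta]=\E a\,\E\zeta+\Cov(a,\zeta)$.
\item Use $|\E\zeta|\le C/n$, which rests on $|\widetilde p-p|\le C/n$ from Lemma~\ref{loc:field}.
\item Bound the covariance by the $\ell^1$--$\ell^\infty$ form of \eqref{eq:covariance-bound}, with $\sum_r\|\Delta_ra\|_\infty\le CA_{ij}$ (few coordinates affect $a$) and $\max_r\|\Delta_r\zeta\|_\infty\le C/n$ (each coordinate has small influence on $\zeta$).
\end{itemize}
This gives $|\E[a\zeta]|\le CA_{ij}/n$. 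Combined with the leading sum $\sum_j\E\Delta_{ij}T=2\Phi_{\bbeta}'(p)+O(n^{-1})$, it yields $\sum_j\E\Delta_iq_j=\varphi_{\bbeta}'(p)+O(n^{-1})$. Then $|\E\Delta_iR_H|\le C/n$ follows from $\Delta_iR_H=\Delta_iR-\rho\sigma(\Delta_if_H-\sigma^{-1})$ and \eqref{stein:increment-moments}.

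With this in hand, your mean-plus-Poincar\'e plan for the weighted sum works. You still need bounded single-edge differences of the sum, which uses the summed second-difference bound of Lemma~\ref{resp:influence}. The paper instead proves the bound for $R$ and transfers it via $\sum_iq_i(1-q_i)\Delta_iR_H=\sum_iq_i(1-q_i)\Delta_iR+\rho^2\sigma^2(K-K_H)$.

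\textbf{A smaller error in the $K_H$ variance.} \eqref{eq:covariance-bound} gives $O(n^{-1})$ for every pair of edges, disjoint or not. Summing pairwise covariances therefore only yields $\Var(K_H)=O(n^{-1})$. Apply \eqref{eq:poincare} to $K_H$ directly, using $\|\Delta_rK_H\|_\infty\le Cn^{-2}$, to get $O(n^{-2})$.
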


A direct application of Cauchy--Schwarz to $R_H(Y)f(V)$ would use only the $O(\sqrt n)$ bound on $\|R_H(Y)\|_2$, which is too large for \eqref{eq:residual-bounds}. We instead keep the two factors inside the expectation. By applying a second Stein equation and using the centering of $R_H$, we can exploit \eqref{stein:rh-weighted}, where the absolute value is taken after summation. The resulting estimate applies separately to $H$ and $K_2$.

\begin{lemma}\label{stein:pairing}
For every $1$-Lipschitz function $f:\mathbb R\to\mathbb R$, we have
\begin{equation}
|\mathbb E[R_H(Y)f(V)]|\le C.
\label{stein:pairing-bound}
\end{equation}
The constant is independent of $f$, $n$ and $\bbeta\in\B$.
\end{lemma}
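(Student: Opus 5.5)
We want $|\E[R_H(Y)f(V)]|\le C$ for 1-Lipschitz $f$. Since $\E R_H(Y)=0$ by Lemma~\ref{stein:conditional}, we may replace $f(V)$ by $f(V)-\E f(Z)$. The idea, as announced before the lemma, is to write this centered test function through a second normal Stein equation. We then use the identity \eqref{stein:target-identity} once more, now with a configuration function built from the remainder itself, rather than applying Cauchy--Schwarz to $R_H(Y)f(V)$.

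\emph{Step 1: the Stein function.} Let $g=g_f$ solve $g'(w)-wg(w)=f(w)-\E f(Z)$. For 1-Lipschitz $f$ the standard bounds give $\|g\|_\infty\le 2$, $\|g'\|_\infty\le\sqrt{2/\pi}$ and $\|g''\|_\infty\le 2$. Then
\[
\E[R_H f(V)]=\E[R_H g'(V)]-\E[R_H V g(V)].
\]
The first term is bounded by $\|g'\|_\infty\|R_H\|_2\le C\sqrt n$, which is still too big. So instead of using the Stein equation directly on $f$, we treat the pair $R_H V$ through \eqref{stein:target-identity} with $A(Y)=R_H(Y)\,g(V)$. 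That identity expresses $\rho\sigma\E[VA]$ as
\[
\sum_i\E[q_i(1-q_i)\Delta_i(R_H g(V))]-\E[R_H^2 g(V)].
\]

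\emph{Step 2: organizing the terms.} We choose instead to apply the Stein equation to $R_H$ as the variable of interest. Set $h(w)=g(w)$ and use the product rule for discrete differences:
\[
\Delta_i(R_H\,g\circ f_H)=\Delta_iR_H\cdot g(f_H(Y^{(i,1)}))+R_H(Y^{(i,0)})\,\Delta_i(g\circ f_H).
\]
Summing with weights $q_i(1-q_i)$ produces three terms:
\begin{itemize}
\item[(a)] $\sum_i q_i(1-q_i)\Delta_iR_H\,g(V)$, whose expectation is at most $\|g\|_\infty Cn$ by \eqref{stein:rh-weighted};
\item[(b)] a correction from replacing $g(f_H(Y^{(i,1)}))$ by $g(V)$, bounded by $\|g'\|_\infty\sum_i\E|\Delta_iR_H|\,\|\Delta_if_H\|_\infty\le Cn^{3/2}\cdot n^{-1}$ by \eqref{stein:increment-size} and \eqref{stein:rh-moments};
\item[(c)] $\sum_i q_i(1-q_i)R_H\Delta_i(g\circ f_H)=\rho\sigma R_H K_H g'(V)+O(\|g''\|_\infty|R_H|\sum_i\|\Delta_if_H\|_\infty^2)$. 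The error here is $O(|R_H|n^2n^{-2})=O(|R_H|)$, with expectation $O(\sqrt n)$.
\end{itemize}
The term $\E[R_H^2g(V)]$ is $O(n)$ by \eqref{stein:rh-moments}. After dividing by $\rho\sigma\asymp n$, every contribution except (c) is $O(1)$, and the (c) error is $O(n^{-1/2})$. Hence
\[
\E[R_H V g(V)]=\E[R_H K_H g'(V)]+O(1),
\]
and therefore
\[
\E[R_H f(V)]=\E[R_H(1-K_H)g'(V)]+O(1).
\]
By Cauchy--Schwarz with \eqref{stein:rh-moments} and \eqref{stein:kh-moments}, the leading term is bounded by $\|g'\|_\infty\,C\sqrt n\cdot n^{-1}=O(1)$. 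This completes the argument.

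\emph{Main obstacle.} The delicate step is (a)--(b). The absolute value in \eqref{stein:rh-weighted} is taken after summation, but the factor $g(f_H(Y^{(i,1)}))$ depends on $i$. One therefore has to replace it by the common factor $g(V)$ first, and only then pull out $\|g\|_\infty$. The cost of this replacement is controlled solely by the summed bound $\sum_i\E|\Delta_iR_H|\le Cn^{3/2}$ together with the $n^{-1}$ increment size, and it must come out exactly at order $n$. Uniformity in $\bbeta\in\B$ is inherited from the uniform constants in Lemmas~\ref{stein:increments} and \ref{stein:remainder}, together with $\rho\sigma\ge cn$ on $\B$.
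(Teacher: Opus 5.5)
Your proposal follows essentially the same route as the paper: a second Stein equation for $f$, the identity \eqref{stein:target-identity} with $A=R_H\,(g\circ f_H)$, the discrete product rule with a first-order Taylor expansion, and then \eqref{stein:rh-weighted}, $\E R_H(Y)^2\le Cn$ and $\|K_H(Y)-1\|_2\le C/n$ to close. One bookkeeping slip: your product rule leaves $R_H(Y^{(i,0)})=R_H(Y)-Y_i\Delta_iR_H(Y)$ in (c), not $R_H(Y)$. The extra sum $\sum_iq_i(1-q_i)Y_i\Delta_iR_H\,\Delta_i(g\circ f_H)$ therefore also has to be bounded. It is handled exactly like (b), at cost $O(n^{-1}\sum_i\E|\Delta_iR_H(Y)|)=O(\sqrt n)$ before dividing by $\rho\sigma$. (The paper's $\mathcal E_{H,f}$ contains the corresponding $(1-2Y_i)\Delta_iR_H$ terms; because it evaluates $G$ at the current configuration, it needs no analogue of your (b).)
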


\subsection{Proof of Theorem~\ref{thm:residual}}\label{sec:residual-proof}

The key step is to subtract the remainders $R_H$ and $R_{K_2}$. Lemma~\ref{stein:pairing} controls their expectations against test functions, and \eqref{stein:linear} allows us to replace the subgraph argument by the edge-count argument. After proving the explicit-scale bounds, we use the conditional identity to obtain the variance precision needed for true-variance normalization.

\begin{proof}[Proof of Theorem~\ref{thm:residual}]
Fix one of the parameter families specified at the start of this section. We first consider $H$ without isolated vertices and sufficiently large $n$.

\proofstep{1} We begin by making the dependence on the parameter uniform. Since the scales involve both $p_{\bbeta}$ and $\rho_{\bbeta}$, we need common positive lower bounds for these quantities on $\B$. With $\varphi_{\bbeta}$ as in \eqref{eq:meanfield}, put $g(\bbeta,u)=u-\varphi_{\bbeta}(u)$. For each $\bbeta_0\in\B$, we have $g(\bbeta_0,p_{\bbeta_0})=0$ and $\partial_u g(\bbeta_0,p_{\bbeta_0})=\rho_{\bbeta_0}>0$. The implicit function theorem therefore provides a local solution $p_{\bbeta}$ that depends smoothly on $\bbeta$.

Choose an open interval $J$ containing $p_{\bbeta_0}$, with $\overline J\subset(0,1)$. On a sufficiently small parameter neighborhood, the local solution lies in $J$ and $\partial_u g>0$ on $\overline J$. Uniqueness at $\bbeta_0$ gives a positive minimum of $|g(\bbeta_0,\cdot)|$ on $[0,1]\setminus J$, which remains positive for nearby parameters by uniform continuity. Thus the local solution is the only root in $[0,1]$. These solutions agree on overlaps and define a smooth $p_{\bbeta}$ on a neighborhood of $\B$, with $\rho_{\bbeta}>0$. The three functions $p_{\bbeta}$, $1-p_{\bbeta}$ and $\rho_{\bbeta}$ are continuous and positive on the compact set $\B$, so
\begin{equation}
\inf_{\bbeta\in\B}\min\{p_{\bbeta},1-p_{\bbeta},\rho_{\bbeta}\}>0.
\label{eq:compact}
\end{equation}
Since $\rho_{\bbeta}$ is also bounded above on $\B$, \eqref{eq:edges}, \eqref{eq:edge-scale} and \eqref{eq:compact} yield constants $0<c<C<\infty$ such that
\begin{equation}
c\le v_{\bbeta}\le C,\qquad
cn\le\sigma_{\bbeta}\le Cn
\quad(n\ge2,\ \bbeta\in\B).
\label{eq:compact-scales}
\end{equation}

\proofstep{2} We now prove the expected-product estimate in \eqref{eq:residual-bounds}. Let $R=R_{K_2}$, $W=W_{\bbeta}$ and $D_H=V-W$. Since $f_{K_2}(Y)=W$, subtracting the two definitions in \eqref{stein:rh} gives $R_H=R-\rho\sigma D_H$. This relation lets us express the desired expectation in terms of the quantities controlled by Lemma~\ref{stein:pairing}, namely
\begin{equation}\label{struct:remainder-transfer}
\begin{aligned}
\rho\sigma\E[D_Hf(W)]
&=\E[Rf(W)]-\E[R_Hf(V)]
+\E\bigl[R_H\{f(V)-f(W)\}\bigr].
\end{aligned}
\end{equation}
Here $R$ and $R_H$ are evaluated at $Y$. All three variables $R(Y)$, $R_H(Y)$ and $D_H$ are centered, so subtracting $f(0)$ does not change the expectations. For $\Lip(f)\le1$, we apply Lemma~\ref{stein:pairing} to $K_2$ and $H$ to bound the first two terms in \eqref{struct:remainder-transfer} by $C$. The last term measures only the change in the test function's argument. By Cauchy--Schwarz, \eqref{stein:rh-moments} and \eqref{stein:linear}, it is bounded by
\[
\left|\E\bigl[R_H\{f(V)-f(W)\}\bigr]\right|
\le\|R_H(Y)\|_2\|V-W\|_2\le C\sqrt n\,n^{-1/2}\le C.
\]
Divide \eqref{struct:remainder-transfer} by $\rho\sigma\asymp n$, using \eqref{eq:compact}--\eqref{eq:compact-scales}. Together with \eqref{stein:linear}, this proves both bounds in \eqref{eq:residual-bounds}. Taking $f(w)=w$ proves \eqref{eq:residual-covariance}.

To obtain \eqref{eq:residual-smooth}, use the integral form of the first-order Taylor remainder. If $\Lip(\psi')\le1$, then
\[
|\psi(W+D_H)-\psi(W)-D_H\psi'(W)|\le\tfrac12D_H^2.
\]
Combining this inequality with \eqref{eq:residual-bounds}, applied to $f=\psi'$, proves \eqref{eq:residual-smooth}. The expectations are finite because $\psi$ has at most quadratic growth.

\proofstep{3} The estimates just proved use the explicit scales. To replace them by true standard deviations, we first obtain a relative variance bound from Lemma~\ref{stein:conditional}. Take $F(w)=w$ in \eqref{stein:kol-decomposition}. Then $I_1(F)=0$, and the remaining two terms give
\begin{equation}
\mathbb EV^2=\mathbb EK_H(Y)-\frac{\mathbb E[R_H(Y)V]}{\rho\sigma}.
\label{stein:variance-identity}
\end{equation}
Combining \eqref{stein:variance-identity} with \eqref{stein:kh-moments}, \eqref{stein:pairing-bound} for $f(w)=w$, and \eqref{eq:compact}--\eqref{eq:compact-scales}, we obtain $\E V^2=1+O(n^{-1})$. By \eqref{stein:fh} and \eqref{eq:subgraph-scale},
\begin{equation}
\mathbb EV^2=\operatorname{Var}(V)
=\frac{\widehat\sigma_{H,\bbeta}^2}{\sigma_{H,\bbeta}^2}
=1+O(n^{-1}).
\label{stein:variance-calibration}
\end{equation}
Let $b=\widehat\sigma_{H,\bbeta}/\sigma_{H,\bbeta}$. For large $n$, \eqref{stein:variance-calibration} implies $b^2\in[1/2,3/2]$, and hence
\begin{equation}
b\ge2^{-1/2},\qquad
|b-1|=\frac{|b^2-1|}{b+1}\le Cn^{-1},\qquad
|b^{-1}-1|\le Cn^{-1}.
\label{stein:scale-ratio}
\end{equation}
We now transfer \eqref{eq:residual-bounds} to true standard deviations. Let $b_H=\widehat\sigma_{H,\bbeta}/\sigma_{H,\bbeta}$ and $b_E=\widehat\sigma_{K_2,\bbeta}/\sigma_{K_2,\bbeta}$. By \eqref{stein:scale-ratio} for $H$ and $K_2$, both ratios and their inverses are $1+O(n^{-1})$. The normalized difference is
\begin{equation}\label{struct:true-residual}
\widehat D_H=b_H^{-1}D_H+(b_H^{-1}-b_E^{-1})W,
\qquad \widehat W_{K_2,\bbeta}=b_E^{-1}W.
\end{equation}
The first bound in \eqref{eq:residual-bounds} and \eqref{stein:linear} imply $\E\widehat D_H^2\le C/n$. To bound $\E[\widehat D_Hf(b_E^{-1}W)]$, subtract $f(0)$. In the first term of \eqref{struct:true-residual}, apply the second bound in \eqref{eq:residual-bounds} after dividing $w\mapsto f(b_E^{-1}w)-f(0)$ by its Lipschitz bound $b_E^{-1}$, which is uniformly bounded by \eqref{stein:scale-ratio}. For the second term, use $|f(b_E^{-1}W)-f(0)|\le b_E^{-1}|W|$ and $\E W^2\le C$. Both contributions are $O(n^{-1})$. Taking $f(w)=w$ and repeating the Taylor estimate proves the covariance and smooth comparisons.

We next include isolated vertices. If $H$ has $r$ of them, let $H_0$ be the graph obtained by deletion and put $v_0=|\mathcal V(H_0)|$. For $n\ge v_H=v_0+r$, every noninduced embedding of $H_0$ has $\prod_{j=0}^{r-1}(n-v_0-j)$ extensions to $H$, so
\begin{equation}\label{stein:isolates}
\begin{gathered}
|\Hom(H,G_y)|=\left\{\prod_{j=0}^{r-1}(n-v_0-j)\right\}|\Hom(H_0,G_y)|,\\
W_{H,\bbeta}=b_nW_{H_0,\bbeta},\qquad
\widehat W_{H,\bbeta}=\widehat W_{H_0,\bbeta},\\
b_n=\frac{\prod_{j=0}^{r-1}(n-v_0-j)}{n^r}
=\prod_{j=0}^{r-1}\left(1-\frac{v_0+j}{n}\right)
=1+O_H(n^{-1}).
\end{gathered}
\end{equation}
Here the mean and standard deviation acquire the same extension factor, $e_H=e_{H_0}$, and $0<b_n\le1$. The identities in \eqref{stein:isolates} follow from \eqref{eq:subgraph-scale}.

By \eqref{stein:isolates},
\[
D_{H,\bbeta}=b_nD_{H_0,\bbeta}+(b_n-1)W_{\bbeta}.
\]
Since $b_n=1+O_H(n^{-1})$ and $\E W_{\bbeta}^2\le C$ by \eqref{stein:linear}, this identity transfers both bounds in \eqref{eq:residual-bounds} from $H_0$ to $H$. For the second bound, subtract $f(0)$ and use $|f(W)-f(0)|\le|W|$. Moreover, \eqref{stein:variance-calibration} holds for $H$ because $\Var(W_{H,\bbeta})=b_n^2\Var(W_{H_0,\bbeta})$. The true-standardized variables for $H$ and $H_0$ coincide, and the covariance and smooth comparisons follow as in Step~2.

Finally, the finitely many excluded sample sizes can be included with a common constant. By \eqref{eq:count}, \eqref{eq:subgraph-scale} and \eqref{eq:compact}--\eqref{eq:compact-scales}, we have
\begin{equation}
0\le|\Hom(H,G_y)|\le n^v,\qquad
\sigma_{H,\bbeta}\ge c_Hn^{v-1},\qquad |W_{H,\bbeta}|\le C_Hn.
\label{stein:finite-explicit-bounds}
\end{equation}
On the finite range of excluded $n$, \eqref{stein:finite-explicit-bounds} controls the second moments of $D_H$ and $W$, hence their expected product against a centered Lipschitz function by Cauchy--Schwarz. For $n\ge v_H$, the count takes distinct values on the empty and complete graphs, both of positive probability under \eqref{eq:model}, so its true variance is positive. The corresponding true-standardized second moments are one. Increasing $C$ therefore extends \eqref{eq:residual-bounds}, its covariance and smooth consequences, and \eqref{stein:variance-calibration} to all the stated sample sizes. This proves Theorem~\ref{thm:residual} on both parameter families fixed at the start of the section.
\end{proof}

\subsection{Proofs of Theorems~\ref{thm:normal} and \ref{thm:signed}}\label{sec:normal-proof}

Having established the comparison with the edge count, we now prove the two normal approximation bounds. Lemma~\ref{stein:increments} already controls $I_1(F)$ in \eqref{stein:kol-decomposition} when $F'$ is Lipschitz. For an indicator test, however, the corresponding derivative has a jump at the threshold. Since its contribution to $I_1(F)$ carries the weight $K_H$, we need a weighted estimate for configurations whose single-edge changes can cross the threshold.

\begin{lemma}\label{stein:concentration}
Choose a fixed $C_1$ such that $0\le\Delta_if_H\le h:=C_1/n$ in \eqref{stein:increment-size}. Then, for every $z\in\mathbb R$, we have
\begin{equation}\label{stein:local-concentration}
\E[K_H(Y)\mathbf1_{\{|V-z|\le h\}}]\le Cn^{-1}.
\end{equation}
The constant is independent of $z$, $n$ and $\bbeta\in\B$.
\end{lemma}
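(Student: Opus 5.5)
Since $0\le K_H\le C$ by \eqref{stein:kh-moments}, it suffices to show $\Pp(|V-z|\le h)\le Cn^{-1}$, a concentration inequality for $V$ on an interval of width comparable to $n^{-1}$. This is the scale of a single-edge increment, so the bound says $V$ has no atom larger than its natural lattice spacing. The plan is to obtain it from the conditional identity of Lemma~\ref{stein:conditional}, used with a piecewise linear test function, in the standard concentration-inequality style of Stein's method. We keep the weight $K_H$ instead of discarding it, because the identity produces exactly the weighted quantity.

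Choose $F$ with $F'=\mathbf1_{[z-2h,z+2h]}$, normalized by $F(z)=0$, so that $\|F\|_\infty\le 2h$. For each $i$, the quantity $\Delta_i(F\circ f_H)(Y)$ equals $\int_0^{\Delta_if_H}F'(V_i^{(0)}+t)\,dt$, where $V_i^{(0)}=f_H(Y^{(i,0)})$. If $|V-z|\le h$, then every point on the segment between $f_H(Y^{(i,0)})$ and $f_H(Y^{(i,1)})$ lies within $2h$ of $z$, because the segment has length at most $h$ and contains $V$. Hence
\[
\Delta_i(F\circ f_H)(Y)\ge \Delta_if_H(Y)\,\mathbf1_{\{|V-z|\le h\}}.
\]
Multiplying by $q_i(1-q_i)\ge0$, summing, and dividing by $\rho\sigma$ gives
\[
\frac1{\rho\sigma}\sum_i\E\bigl[q_i(1-q_i)\Delta_i(F\circ f_H)(Y)\bigr]\ge\E\bigl[K_H(Y)\mathbf1_{\{|V-z|\le h\}}\bigr].
\]

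By \eqref{stein:target-identity} with $A=F\circ f_H$, the left side equals $\E[VF(V)]+\E[R_H(Y)F(V)]/(\rho\sigma)$. We bound each piece. The first satisfies $|\E[VF(V)]|\le 2h\,\E|V|\le Ch$, using $\E V^2\le C$ from \eqref{stein:linear}. For the second, $F$ is $1$-Lipschitz, so Lemma~\ref{stein:pairing} gives $|\E[R_HF(V)]|\le C$. Alternatively, Cauchy--Schwarz with \eqref{stein:rh-moments} gives $\|R_H\|_2\|F\|_\infty\le C\sqrt n\cdot h$. After dividing by $\rho\sigma\asymp n$, which holds by \eqref{eq:compact-scales}, either bound yields $O(n^{-1})$, and uniformity in $z$ and $\bbeta\in\B$ follows. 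Combining the pieces gives \eqref{stein:local-concentration}.

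The only delicate point is the pointwise lower bound for $\Delta_i(F\circ f_H)$. It relies on the monotonicity $0\le\Delta_if_H\le h$ from \eqref{stein:increment-size}, so that the whole single-edge segment stays inside the enlarged window. Everything else reduces to moment bounds already recorded in Lemmas~\ref{stein:increments}--\ref{stein:pairing}. These lemmas hold uniformly on the enlarged signed family as well, so no separate argument is needed there.
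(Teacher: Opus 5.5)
Your proposal is correct and is essentially the paper's proof. You use the same truncated linear test function ($F=\ell_{z,h}$), the same pointwise lower bound $\Delta_i(F\circ f_H)\ge\Delta_if_H\,\mathbf1_{\{|V-z|\le h\}}$ obtained from the monotone increments, and the same summed identity \eqref{stein:target-identity}. The only difference is the final moment bound. The paper bounds $\E[(\sum_i\{Y_i-q_i(Y)\})^2]\le CN$ directly from \eqref{stein:bernoulli} and the deterministic influence bounds, whereas you split this sum as $\rho\sigma V+R_H$ and invoke \eqref{stein:linear} and \eqref{stein:rh-moments} (or Lemma~\ref{stein:pairing}). That is equally valid, since those lemmas do not depend on this one.
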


\begin{proof}[Proof of Theorems~\ref{thm:normal} and \ref{thm:signed}]
We work with either of the parameter families fixed at the start of the section. In the perturbed case, Lemma~\ref{lem:uniform-input} has already established the unique stable fixed point, and the preceding lemmas apply with common constants on the enlarged family. It remains to prove the distance estimates there, as well as on the two basic regions. We first take $H$ without isolated vertices and large $n$, and let $h=C_1/n$ as in Lemma~\ref{stein:concentration}.

\proofstep{1} We start with the Wasserstein distance and record the normal Stein solutions for both Lipschitz and indicator tests, so that the same notation can be used in Step~2. Following Stein \cite[Section 2]{Stein1972}, consider
\begin{equation}
xF_\psi(x)-F_\psi'(x)=\psi(x)-\mathbb E\psi(Z),\qquad Z\sim N(0,1).
\label{stein:equation}
\end{equation}
Let $\mathbf1_A$ denote the indicator, equal to one on $A$ and zero otherwise. For Lipschitz $\psi$ and for $\psi_z(x)=\mathbf1_{\{x\le z\}}$, its bounded solution is
\begin{equation}
F_\psi(x)=-e^{x^2/2}\int_{-\infty}^x
\{\psi(u)-\mathbb E\psi(Z)\}e^{-u^2/2}\,du.
\label{stein:solution}
\end{equation}
A Lipschitz function grows at most linearly, so the integral is well defined. We use the standard Stein-factor bounds
\begin{align}
\operatorname{Lip}(\psi)\le1
&\quad\Longrightarrow\quad
\|F_\psi\|_\infty,\ \|F_\psi'\|_\infty,
\operatorname{Lip}(F_\psi')\le2,\label{stein:smooth-bounds}\\
\psi=\psi_z
&\quad\Longrightarrow\quad
\|F_z\|_\infty\le1,\quad \operatorname{Lip}(F_z)\le1,
\quad\|F_z'\|_\infty\le1.\label{stein:indicator-bounds}
\end{align}
The bounds \eqref{stein:smooth-bounds}--\eqref{stein:indicator-bounds} with this sign convention appear in Fang, Liu, Shao and Zhao \cite[equations (4.1)--(4.2), (4.26)--(4.27)]{FLSZ}. Related normal approximation arguments under dependence are developed by Chen and Shao \cite[Sections 2.1--2.2]{CS2004} and Chatterjee \cite[Theorem 2.2]{Chatterjee2008}.

For a general $1$-Lipschitz $\psi$, let $\psi_\varepsilon(x)=\int\psi(x-\varepsilon u)\chi(u)\,du$, where $\chi$ is a smooth probability density on $[-1,1]$. Then $\|\psi_\varepsilon-\psi\|_\infty\le\varepsilon$ and $\|\psi_\varepsilon'\|_\infty\le1$. By \eqref{stein:solution} and \eqref{stein:equation}, the solutions and their first derivatives converge uniformly on compact intervals. The bounds in \eqref{stein:smooth-bounds} therefore pass to the limit.

The derivative in \eqref{stein:indicator-bounds} is used almost everywhere in integrals. For pointwise expectations such as $\E F_z'(V)$, we set
\begin{equation}
F_z'(z):=zF_z(z)-1+\Phi(z).
\label{stein:indicator-representative}
\end{equation}
With the representative \eqref{stein:indicator-representative}, \eqref{stein:equation} holds for every real $x$, including $x=z$. In particular, the identity allows an atom of $V$ at the threshold.

Combining \eqref{stein:equation} with \eqref{eq:distances}, we have
\begin{equation}\label{stein:distance-reduction}
\begin{gathered}
\dWass(V,Z)=\sup_{\Lip(\psi)\le1}\left|\E[VF_\psi(V)]-\E F_\psi'(V)\right|,
\\
\dKol(V,Z)=\sup_{z\in\mathbb R}\left|\E[VF_z(V)]-\E F_z'(V)\right|.
\end{gathered}
\end{equation}
For the coordinate replacements in \eqref{eq:force},
\begin{equation}
f_H(y^{(i,0)})=f_H(y)-y_i\Delta_if_H(y),\quad
f_H(y^{(i,1)})=f_H(y)+(1-y_i)\Delta_if_H(y).
\label{stein:forced-endpoints}
\end{equation}
Take a $1$-Lipschitz function $\psi$. To estimate $I_1(F_\psi)$ in \eqref{stein:kol-decomposition}, use the $2$-Lipschitz bound for $F_\psi'$ in \eqref{stein:smooth-bounds}. Integrating between the endpoints in \eqref{stein:forced-endpoints} and using \eqref{stein:increment-size}, we obtain
\begin{equation}
|\Delta_i(F_\psi\circ f_H)-\Delta_if_H\,F_\psi'(f_H)|
\le (\Delta_if_H)^2\le h\Delta_if_H.
\label{stein:smooth-remainder}
\end{equation}
By \eqref{stein:smooth-remainder} and \eqref{stein:kh}, we have $|I_1(F_\psi)|\le h\E K_H(Y)$. Moreover, \eqref{stein:smooth-bounds} implies $|I_2(F_\psi)|\le2\E|K_H(Y)-1|$. For $I_3(F_\psi)$, apply \eqref{stein:pairing-bound} to $F_\psi/2$. Combining these estimates with \eqref{stein:kol-decomposition} and \eqref{stein:equation}, we obtain
\begin{equation}\label{stein:wass-proof}
\begin{aligned}
|\E\psi(V)-\E\psi(Z)|&\le\sum_{\ell=1}^3|I_\ell(F_\psi)|
\\
&\le h\E K_H(Y)+2\E|K_H(Y)-1|
+\frac{|\E[R_H(Y)F_\psi(V)]|}{\rho\sigma}\le Cn^{-1}.
\end{aligned}
\end{equation}
The last inequality in \eqref{stein:wass-proof} follows from \eqref{stein:kh-moments}, \eqref{stein:pairing-bound}, $h=C_1/n$ and \eqref{eq:compact}--\eqref{eq:compact-scales}. Taking the supremum in \eqref{stein:distance-reduction} proves the Wasserstein bound. The first moment of $V$ is finite by \eqref{stein:linear}.

\proofstep{2} We next take the indicator test $\psi_z$ to obtain the Kolmogorov bound. The estimates for $I_2(F_z)$ and $I_3(F_z)$ are the same as before, using \eqref{stein:indicator-bounds}, \eqref{stein:kh-moments} and \eqref{stein:pairing-bound}. Only $I_1(F_z)$ needs a different argument, because $F_z'$ has a jump at $z$. We separate this jump from the continuous part before applying Lemma~\ref{stein:concentration}. By \eqref{stein:equation} and \eqref{stein:indicator-bounds}, for $|u|\le h$ we have
\begin{equation}\label{stein:indicator-derivative-difference}
\begin{aligned}
|F_z'(w+u)-F_z'(w)|
&\le |(w+u)F_z(w+u)-wF_z(w)|+|\mathbf1_{\{w+u\le z\}}-\mathbf1_{\{w\le z\}}|\\
&\le |u|(1+|w|)+\mathbf1_{\{|w-z|\le h\}}.
\end{aligned}
\end{equation}
Integrating \eqref{stein:indicator-derivative-difference} over the interval in \eqref{stein:forced-endpoints} and using \eqref{stein:increment-size} for its length, we obtain
\begin{align}
|\Delta_i(F_z\circ f_H)(Y)-\Delta_if_H(Y)F_z'(V)|
&\le\frac{\{\Delta_if_H(Y)\}^2}{2}(1+|V|)
+\Delta_if_H(Y)\mathbf1_{\{|V-z|\le h\}}\notag\\
&\le\frac h2\Delta_if_H(Y)(1+|V|)
+\Delta_if_H(Y)\mathbf1_{\{|V-z|\le h\}}.
\label{stein:indicator-remainder}
\end{align}
The integral of $|u|$ over the increment interval contributes the factor $1/2$. At $V=z$, we use \eqref{stein:indicator-representative} for the subtracted value. The integral is unchanged by a single point, and the closed event in \eqref{stein:indicator-remainder} still bounds the jump.

Substituting \eqref{stein:indicator-remainder} into $I_1(F_z)$ in \eqref{stein:kol-decomposition} and using \eqref{stein:kh}, we obtain the first two expectations below. Combining this with the bounds on $I_2(F_z)$ and $I_3(F_z)$, we have
\begin{align}
|\Pp(V\le z)-\Phi(z)|
&\le\sum_{\ell=1}^3|I_\ell(F_z)|\le\frac h2\E[K_H(Y)(1+|V|)]
+\E[K_H(Y)\mathbf1_{\{|V-z|\le h\}}]\notag\\
&\quad+\E|K_H(Y)-1|
+\frac{|\E[R_H(Y)F_z(V)]|}{\rho\sigma}
\le Cn^{-1}.\label{stein:kol-proof}
\end{align}
The two contributions from $I_1(F_z)$ in \eqref{stein:kol-proof} are $O(n^{-1})$ by \eqref{stein:kh-moments}, \eqref{stein:linear} and \eqref{stein:local-concentration}. Combining \eqref{stein:indicator-bounds}, \eqref{stein:kh-moments}, \eqref{stein:pairing-bound} and \eqref{eq:compact}--\eqref{eq:compact-scales} gives the same order for $I_2(F_z)$ and $I_3(F_z)$. Taking the supremum in \eqref{stein:distance-reduction} proves the Kolmogorov bound.

\proofstep{3} We have proved both distance bounds for the explicit scales. Combining \eqref{stein:wass-proof} and \eqref{stein:kol-proof} with \eqref{stein:variance-calibration}, we obtain \eqref{eq:main-normal} and \eqref{eq:normal-signed} for large $n$. To complete the proof, we transfer the bounds to true standard deviations and then include isolated vertices and the remaining sample sizes. Let $b=\widehat\sigma_{H,\bbeta}/\sigma_{H,\bbeta}$ as in \eqref{stein:scale-ratio}. For every $a>0$, we use the elementary comparisons
\begin{equation}
\begin{aligned}
d_{\mathrm{Kol}}(aU,Z)&\le d_{\mathrm{Kol}}(U,Z)+\sup_z|\Phi(z/a)-\Phi(z)|,
\\
d_{\mathrm{Wass}}(aU,Z)&\le a\,d_{\mathrm{Wass}}(U,Z)+|a-1|\mathbb E|Z|.
\end{aligned}
\label{stein:scale-comparison}
\end{equation}
The Kolmogorov bound in \eqref{stein:scale-comparison} follows from $\Pp(aU\le z)=\Pp(U\le z/a)$. For the Wasserstein bound, compare $aU$ first with $aZ$ and then with $Z$. For $a$ in a fixed neighborhood of one, the mean value theorem and $\sup_x|x|\phi(x)<\infty$ give
\begin{equation}
\sup_z|\Phi(z/a)-\Phi(z)|\le C|a-1|.
\label{stein:normal-scale-comparison}
\end{equation}
Since $\widehat W_{H,\bbeta}=b^{-1}V$, applying \eqref{stein:scale-comparison}--\eqref{stein:normal-scale-comparison} with $a=b^{-1}$ and using \eqref{stein:scale-ratio}, \eqref{stein:kol-proof} and \eqref{stein:wass-proof} proves the two distance bounds under true-variance normalization.

For a graph with isolated vertices, use \eqref{stein:isolates} and apply \eqref{stein:scale-comparison}--\eqref{stein:normal-scale-comparison} with $a=b_n$. The two distance bounds follow from those for $H_0$, and the true-standardized variables coincide. The variance estimate was already extended to these graphs in Section~\ref{sec:residual-proof}.

It remains to cover finitely many sample sizes $2\le n<n_0$. By \eqref{eq:distances} and \eqref{stein:finite-explicit-bounds}, we have
\begin{equation}
d_{\mathrm{Kol}}(W_{H,\bbeta},Z)\le1,\quad
d_{\mathrm{Wass}}(W_{H,\bbeta},Z)\le C(1+n),\quad
\left|\widehat\sigma_{H,\bbeta}^2/\sigma_{H,\bbeta}^2-1\right|\le C_Hn^2+1.
\label{stein:finite-distance-bounds}
\end{equation}
For $n\ge v_H$, positivity of the true variance was proved in Section~\ref{sec:residual-proof}. Hence
\begin{equation}\label{stein:finite-true-bounds}
\widehat\sigma_{H,\bbeta}>0,\quad
\mathbb E\widehat W_{H,\bbeta}^{\,2}=1,\quad
d_{\mathrm{Kol}}(\widehat W_{H,\bbeta},Z)\le1,\quad
d_{\mathrm{Wass}}(\widehat W_{H,\bbeta},Z)\le1+\mathbb E|Z|.
\end{equation}
The right-hand sides of \eqref{stein:finite-distance-bounds}--\eqref{stein:finite-true-bounds} are uniformly bounded on this finite range. Increasing $C$ completes both normal approximation bounds. Together with \eqref{stein:variance-calibration} and the signed-family argument in the proof of Theorem~\ref{thm:residual}, this proves Theorems~\ref{thm:normal} and \ref{thm:signed}.
\end{proof}

For the edge count, both distance bounds have matching lower bounds under either normalization. Let $\widehat W_{\bbeta}=\widehat W_{K_2,\bbeta}$. Uniformly on each parameter family considered in this section, for $d\in\{\dKol,\dWass\}$ we have
\begin{equation}
d(W_{\bbeta},Z)\ge c n^{-1},\qquad
d(\widehat W_{\bbeta},Z)\ge c n^{-1},\qquad n\ge2.
\label{stein:lattice-lower}
\end{equation}
Indeed, the Poincar\'e inequality \eqref{eq:poincare} gives $\Var(E_{\bbeta})\le CN$. By Chebyshev's inequality, an interval containing at most $Cn$ integers carries probability at least $3/4$, so one atom has mass at least $c/n$. Continuity of $\Phi$ implies that its Kolmogorov distance from either normalized count is at least half this mass. For Wasserstein distance, let $s$ be either $\sigma_{\bbeta}$ or $\sqrt{\Var(E_{\bbeta})}$ and put $U=(E_{\bbeta}-\mu_{\bbeta})/s$ and $\Lambda=-\mu_{\bbeta}/s+s^{-1}\mathbb Z$. By \eqref{eq:compact-scales}, \eqref{eq:poincare} and \eqref{stein:finite-true-bounds}, $0<s\le Cn$. The function $f(x)=\operatorname{dist}(x,\Lambda)$ is $1$-Lipschitz and vanishes at $U$. At least unit length of $[-1,1]$ lies at distance at least $\min\{1,s^{-1}\}/8$ from $\Lambda$. Integrating over this set, we obtain
\[
\dWass(U,Z)\ge\E f(Z)
\ge\frac{\phi(1)}8\min\{1,s^{-1}\}\ge\frac cn.
\]
Combining the two lattice estimates proves \eqref{stein:lattice-lower} for both normalizations.

\subsection{Joint fluctuations}\label{sec:joint-consequences}

Since all subgraph counts are defined on the same probability space as the edge count, we can now compare them jointly with a single normal variable. The expected-product estimate also controls their covariances at the finer order needed for the information matrix in Section~\ref{sec:likelihood-proof}.

\begin{corollary}\label{stein:joint}
Let $H^{(1)},\ldots,H^{(m)}$ be arbitrary fixed simple graphs, each with at least one edge. For test functions that are Lipschitz with respect to Euclidean distance, we have, uniformly over $\bbeta\in\B$,
\begin{equation}
\sup_{\operatorname{Lip}(\psi)\le1}
\left|\mathbb E\psi(W_{H^{(1)},\bbeta},\ldots,W_{H^{(m)},\bbeta})
-\mathbb E\psi(Z,\ldots,Z)\right|
\le Cn^{-1/2}.
\label{stein:joint-distance}
\end{equation}
Furthermore, for fixed $a,b\in\{1,\ldots,m\}$, we have
\begin{equation}
\operatorname{Cov}(W_{H^{(a)},\bbeta},W_{H^{(b)},\bbeta})=1+O(n^{-1}).
\label{stein:joint-covariance}
\end{equation}
For $n\ge\max_a v_{H^{(a)}}$, both statements also hold when each component is normalized by its true standard deviation.
\end{corollary}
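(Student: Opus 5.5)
The plan is to reduce both statements to the edge count $W=W_{\bbeta}$ through the differences $D_a=D_{H^{(a)},\bbeta}=W_{H^{(a)},\bbeta}-W$, which are controlled by Theorem~\ref{thm:residual}, and then use the one-dimensional Wasserstein bound for $W$ from Theorem~\ref{thm:normal} (or Theorem~\ref{thm:signed} in the enlarged family). All estimates hold with constants uniform over $\bbeta\in\B$, since each cited result is uniform there.

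For \eqref{stein:joint-distance}, take $\psi$ that is $1$-Lipschitz for the Euclidean norm on $\mathbb R^m$ and write
\[
\E\psi(W_{H^{(1)},\bbeta},\ldots,W_{H^{(m)},\bbeta})-\E\psi(Z,\ldots,Z)
=\E\bigl[\psi(W+D_1,\ldots,W+D_m)-\psi(W,\ldots,W)\bigr]+\bigl\{\E g(W)-\E g(Z)\bigr\},
\]
where $g(w)=\psi(w,\ldots,w)$. The first bracket is bounded by $\E(\sum_a D_a^2)^{1/2}\le(\sum_a\E D_a^2)^{1/2}\le Cn^{-1/2}$, using the $L^2$ bound in \eqref{eq:residual-bounds}. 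The function $g$ is $\sqrt m$-Lipschitz, so by \eqref{eq:main-normal} with $H=K_2$ the second bracket is $O(n^{-1})$. The rate $n^{-1/2}$ therefore comes entirely from the $L^2$ size of the differences, and the argument cannot do better than this.

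For \eqref{stein:joint-covariance}, expand
\[
\Cov(W_{H^{(a)}},W_{H^{(b)}})=\Var W+\Cov(D_a,W)+\Cov(D_b,W)+\Cov(D_a,D_b).
\]
The term $\Var W=\widehat\sigma^2_{K_2}/\sigma^2_{K_2}$ equals $1+O(n^{-1})$ by the variance part of \eqref{eq:main-normal}. The two middle terms are $O(n^{-1})$ by \eqref{eq:residual-covariance}. By Cauchy--Schwarz, $|\Cov(D_a,D_b)|\le\|D_a\|_2\|D_b\|_2\le C/n$ from \eqref{eq:residual-bounds}. This step is the crux of the corollary. A naive bound on the cross terms would only give $O(n^{-1/2})$, and the finer order $O(n^{-1})$ comes precisely from the expected-product estimate \eqref{eq:residual-covariance}; the product of two $n^{-1/2}$ norms is already of order $n^{-1}$.

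For true-variance normalization, I would write $\widehat W_{H^{(a)}}=b_a^{-1}W_{H^{(a)}}$, where $b_a^{-1}=1+O(n^{-1})$ by \eqref{stein:scale-ratio}, for $n\ge\max_a v_{H^{(a)}}$ and large $n$. The Lipschitz comparison then changes by at most $\sum_a|b_a^{-1}-1|\,\|W_{H^{(a)}}\|_2=O(n^{-1})$, using $\E V^2\le C$ from \eqref{stein:linear}. The covariances are multiplied by $b_a^{-1}b_b^{-1}=1+O(n^{-1})$. Graphs with isolated vertices are already covered, since Theorem~\ref{thm:residual} and Theorem~\ref{thm:normal} hold for all fixed $H$. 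The finitely many small $n$ are absorbed by enlarging $C$, using the crude bounds \eqref{stein:finite-explicit-bounds} and \eqref{stein:finite-true-bounds}, as in the earlier proofs.
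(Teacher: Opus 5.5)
Your proposal is correct and matches the paper's proof essentially step by step. Both arguments compare with the diagonal vector via the $\sqrt m$-Lipschitz map $w\mapsto\psi(w,\ldots,w)$ and the $L^2$ bound on the differences, then expand the covariance into four terms handled by \eqref{eq:residual-covariance} and Cauchy--Schwarz; the only cosmetic difference is that for true-variance normalization you rescale each component by $b_a^{-1}=1+O(n^{-1})$, whereas the paper invokes the hatted versions of Theorem~\ref{thm:residual} directly, and both work.
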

\begin{proof}[Proof of Corollary~\ref{stein:joint}]
We compare the subgraph vector with $(W_{\bbeta},\ldots,W_{\bbeta})$ on the original probability space. The diagonal map is $\sqrt m$-Lipschitz. Applying \eqref{eq:residual-bounds} and \eqref{eq:main-normal}, with their extensions in Theorem~\ref{thm:signed} for the perturbed family, we obtain, for every Euclidean $1$-Lipschitz $\psi$,
\[
\begin{aligned}
\left|\E\psi(W_{H^{(1)},\bbeta},\ldots,W_{H^{(m)},\bbeta})
-\E\psi(Z,\ldots,Z)\right|
&\le\E\left(\sum_{a=1}^m|W_{H^{(a)},\bbeta}-W_{\bbeta}|^2\right)^{1/2}
+\sqrt m\,\dWass(W_{\bbeta},Z)\\
&\le\left(\sum_{a=1}^m\E|W_{H^{(a)},\bbeta}-W_{\bbeta}|^2\right)^{1/2}
+C/n\le Cn^{-1/2}.
\end{aligned}
\]
Taking the supremum over $\psi$ proves \eqref{stein:joint-distance}.

For the covariance, set $W=W_{\bbeta}$, $D_a=D_{H^{(a)},\bbeta}$ and $D_b=D_{H^{(b)},\bbeta}$. All variables are centered. Therefore,
\[
\Cov(W+D_a,W+D_b)
=\E W^2+\E[D_aW]+\E[D_bW]+\E[D_aD_b].
\]
The first term equals $1+O(n^{-1})$ by \eqref{stein:variance-calibration}. The next two are $O(n^{-1})$ by \eqref{eq:residual-covariance}, and the last is $O(n^{-1})$ by Cauchy--Schwarz and \eqref{eq:residual-bounds}. This proves \eqref{stein:joint-covariance}.

For true-variance normalization, use the hatted versions of \eqref{eq:residual-bounds} and \eqref{eq:residual-covariance}, together with the normal approximation in Theorem~\ref{thm:normal} or \ref{thm:signed}. The edge-count second moment is then exactly one, and the same calculations prove \eqref{stein:joint-distance} and \eqref{stein:joint-covariance} throughout the stated range.
\end{proof}

The joint approximation just proved identifies the leading fluctuations. For Corollary~\ref{cor:conditional-centering}, we instead retain the differences after multiplication by $\sqrt n$. The second bound in \eqref{eq:residual-bounds} then allows us to determine their conditional means in every joint subsequential limit.

\begin{proof}[Proof of Corollary~\ref{cor:conditional-centering}]
By \eqref{eq:residual-bounds} and \eqref{stein:variance-calibration}, all coordinate second moments in \eqref{eq:residual-joint-limit} are uniformly bounded, which proves tightness. The edge-count bound in Theorem~\ref{thm:normal} or \ref{thm:signed} identifies the first limit as standard normal. Applying weak convergence to $u^2\wedge K$ and then letting $K\to\infty$ shows that each $U_j$ has a finite second moment.

Let $f$ be bounded and Lipschitz. By the second estimate in \eqref{eq:residual-bounds},
\begin{equation}\label{struct:limit-test}
\left|\E\bigl[\sqrt nD_{H^{(j)},\bbeta_n}f(W_{\bbeta_n})\bigr]\right|
\le C\Lip(f)n^{-1/2}\longrightarrow0.
\end{equation}
By \eqref{eq:residual-bounds}, the products in \eqref{struct:limit-test} have second moments at most $C\|f\|_\infty^2$ and are therefore uniformly integrable. Combining this with the joint convergence in \eqref{eq:residual-joint-limit}, we may pass to expectations in \eqref{struct:limit-test} and obtain $\E[U_jf(Z)]=0$. The finite signed measure $B\mapsto\E[U_j\ind_{\{Z\in B\}}]$ therefore integrates every bounded Lipschitz function to zero. Such functions determine finite signed measures on $\mathbb R$, so this measure vanishes. This is exactly \eqref{eq:residual-conditional-mean}. The true-variance version follows from the corresponding bounds in Theorem~\ref{thm:residual} and the same argument.
\end{proof}

\section{Proofs of Theorems~\ref{thm:likelihood} and \ref{thm:md}}\label{sec:md}

We now apply the parameter-uniform fluctuation estimates to local likelihood ratios and relative tail probabilities. The likelihood argument integrates covariances along a short parameter segment. For moderate deviations, we choose an exponential tilt that brings the tail threshold close to the new mean, then estimate the weighted tail integral left by the change of measure. The required lemmas are stated in Section~\ref{sec:tilting-statements}, used in Section~\ref{sec:md-proof}, and proved in Appendix~\ref{sec:tilting-lemmas}.

\subsection{Proof of Theorem~\ref{thm:likelihood}}\label{sec:likelihood-proof}

By \eqref{eq:score-information}, the covariance matrix of the sufficient statistics is the Hessian of the log partition function. We first identify its leading term using \eqref{stein:joint-covariance} and integrate along the local parameter segment. The edge-count approximation then identifies the random linear term in the likelihood ratio.

\begin{proof}[Proof of Theorem~\ref{thm:likelihood} and \eqref{eq:local-total-variation}]
Throughout this proof, $\bbeta$ is the actual parameter, and the fixed energy family includes every added coordinate in the signed case. By \eqref{eq:subgraph-scale} and \eqref{eq:sufficient-statistics}, for $1\le j\le k$ we have
\begin{equation}\label{info:statistic-identity}
A_{j,n}(Y_{\bbeta})-\E A_{j,n}(Y_{\bbeta})
=2a_{j,\bbeta}\sigma_{\bbeta}W_{H_j,\bbeta}.
\end{equation}
Combining \eqref{info:statistic-identity}, \eqref{stein:joint-covariance} and $\sigma_{\bbeta}^2=Nv_{\bbeta}$ from \eqref{eq:edge-scale}, we obtain
\[
\mathcal I_n(\bbeta)_{ij}
=4Nv_{\bbeta}a_{i,\bbeta}a_{j,\bbeta}\{1+O(n^{-1})\}.
\]
The dimension is fixed, and $v_{\bbeta}$ and $\ba_{\bbeta}$ are uniformly bounded by \eqref{eq:compact}--\eqref{eq:compact-scales}. Hence the entrywise $O(n)$ error also has operator norm $O(n)$, proving \eqref{eq:information-expansion}. Since $4N=2n^2-2n$, it follows that
\begin{equation}\label{info:normalized-information}
n^{-2}\mathcal I_n(\bbeta)
=2v_{\bbeta}\ba_{\bbeta}\ba_{\bbeta}^{\mathsf T}+O(n^{-1})
\end{equation}
in operator norm.

To use \eqref{info:normalized-information} in the likelihood expansion, we need it uniformly along $\bbeta+t\bu/n$, $0\le t\le1$. For a compact subset of $\mathcal S_+$ or $\mathcal D$, the fixed-point argument leading to \eqref{eq:compact} and the strict coefficient conditions provide a compact neighborhood in the same region. For sufficiently large $n$, it contains every segment with $|\bu|\le M$. For the inner signed family, return to the notation $\btheta=(\bbeta,\bh)\in\B_{\mathcal J}(\eta)$ from \eqref{eq:signed-family}, with $k$ original and $m$ added coordinates. Here the margin from $\eta$ to $2\eta$ applies only to the added coefficients, not to the ferromagnetic base set $\B$. To keep the base parameter in $\B$, we therefore leave $\bbeta$ fixed and represent its displacement by extra copies of $H_1,\ldots,H_k$. The total size of the added coefficients is then bounded by
\begin{equation}\label{info:signed-path}
\sum_{\ell=1}^m e_{J_\ell}^{\,2}
\left|h_\ell+\frac{tu_{k+\ell}}n\right|
+\sum_{j=1}^k e_j^2\frac{t|u_j|}n
\le\eta+\frac{C_{\cH,\mathcal J}M}n\le2\eta.
\end{equation}
Theorem~\ref{thm:signed} uses the same radius for the fixed added family $(\mathcal J,\cH)$, so \eqref{info:signed-path} places all these segments in one outer family. The repeated-coordinate representation leaves the Gibbs measure and mean-field map unchanged. Applying \eqref{stein:joint-covariance} to the original sufficient statistics under this measure proves \eqref{info:normalized-information} in the coordinates of \eqref{eq:sufficient-statistics}, including at boundary points of the inner signed set. From now on, $\bbeta$ again denotes the full actual parameter.

We also need the leading information matrix to vary by $O(n^{-1})$ along these segments. Implicit differentiation of \eqref{eq:meanfield} gives
\[
\rho_{\bbeta}\,\partial_{\beta_j}p_{\bbeta}
=2p_{\bbeta}(1-p_{\bbeta})e_jp_{\bbeta}^{e_j-1},
\qquad 1\le j\le k,
\]
which proves \eqref{eq:meanfield-response}. The positive lower bounds in \eqref{eq:compact} also bound the derivatives of $v_{\bbeta}$ and $\ba_{\bbeta}$ on the comparison families. Combining these bounds with \eqref{info:normalized-information}, we obtain, uniformly over $0\le t\le1$ and $|\bu|\le M$,
\begin{equation}\label{info:path-information}
n^{-2}\mathcal I_n(\bbeta+t\bu/n)
=2v_{\bbeta}\ba_{\bbeta}\ba_{\bbeta}^{\mathsf T}+O_M(n^{-1}).
\end{equation}
In the signed case, the same derivative bounds follow in the repeated-coordinate representation of \eqref{info:signed-path}.

We can now integrate \eqref{info:path-information}. Taylor's formula and the exact Hessian identity \eqref{eq:score-information} yield
\begin{equation}\label{info:hessian-integral}
\begin{aligned}
&\log Z(\bbeta+\bu/n)-\log Z(\bbeta)
-\frac{\bu^{\mathsf T}\E\bA_n(Y_{\bbeta})}{n}\\
&\qquad=\frac1{n^2}\int_0^1(1-t)
\bu^{\mathsf T}\mathcal I_n(\bbeta+t\bu/n)\bu\,dt=v_{\bbeta}(\ba_{\bbeta}^{\mathsf T}\bu)^2+O_M(n^{-1}),
\end{aligned}
\end{equation}
where the last equality follows from \eqref{info:path-information}. Substitution into the exact likelihood identity obtained from \eqref{eq:model},
\[
\log\frac{p_{\bbeta+\bu/n}(Y_{\bbeta})}{p_{\bbeta}(Y_{\bbeta})}
=\frac{\bu^{\mathsf T}\bA_n(Y_{\bbeta})}{n}
-\log Z(\bbeta+\bu/n)+\log Z(\bbeta),
\]
proves \eqref{eq:likelihood-expansion}. The remainder is deterministic because it is the error in \eqref{info:hessian-integral}.

The deterministic part of the likelihood expansion is now established. To identify the random term, we compare the centered sufficient statistics with the normalized edge count. Applying \eqref{eq:residual-bounds} to the energy graphs in \eqref{info:statistic-identity}, and using $2\sigma_{\bbeta}/n=\sqrt{2v_{\bbeta}(1-n^{-1})}$, we obtain
\begin{equation}\label{info:score-edge}
\E\left|
\frac{\bA_n(Y_{\bbeta})-\E\bA_n(Y_{\bbeta})}{n}
-\sqrt{2v_{\bbeta}}\ba_{\bbeta}W_{\bbeta}
\right|^2\le\frac Cn.
\end{equation}
Combining \eqref{info:score-edge} with the edge-count normal approximation in Theorem~\ref{thm:normal} or \ref{thm:signed}, and then with \eqref{eq:likelihood-expansion}, proves \eqref{eq:likelihood-limit}. The same approximation proves joint convergence for finitely many directions, since each uses the same $W_{\bbeta}$.

It remains to prove the total variation estimate \eqref{eq:local-total-variation}. Suppose that $\ba_{\bbeta}^{\mathsf T}\bu=0$. Smoothness along the segment implies $|\ba_{\bbeta+t\bu/n}^{\mathsf T}\bu|\le C_M/n$. By \eqref{info:normalized-information}, the information in this direction satisfies $n^{-2}\bu^{\mathsf T}\mathcal I_n(\bbeta+t\bu/n)\bu\le C_M/n$. We can therefore bound the relative entropy by taking the expectation of the negative log-likelihood ratio and using the integral in \eqref{info:hessian-integral}. This yields
\begin{equation}\label{info:relative-entropy}
\sum_{y\in\{0,1\}^{\cI}}p_{\bbeta}(y)
\log\frac{p_{\bbeta}(y)}{p_{\bbeta+\bu/n}(y)}
\le C_M/n.
\end{equation}
For positive probability masses $p,q$ on a finite set $\Omega$, Cauchy--Schwarz and $-\log x\ge2(1-\sqrt x)$ imply
\[
\begin{aligned}
\left(\frac12\sum_{y\in\Omega}|p(y)-q(y)|\right)^2
\le1-\left(\sum_{y\in\Omega}\sqrt{p(y)q(y)}\right)^2
\le\sum_{y\in\Omega}p(y)\log\frac{p(y)}{q(y)}.
\end{aligned}
\]
Applying this comparison with \eqref{info:relative-entropy} proves \eqref{eq:local-total-variation}.
\end{proof}

\subsection{Auxiliary lemmas for Theorem~\ref{thm:md}}\label{sec:tilting-statements}

For moderate deviations, we need estimates along the parameter path generated by tilting with the observed subgraph count. Let $\bbeta_0$ denote a base parameter in a compact family of Theorem~\ref{thm:normal} or in $\B_{\mathcal J}(\eta)$ from Theorem~\ref{thm:signed}. In the latter case, $\bbeta_0$ includes all added coefficients. Fix $L>0$ and first assume that $H$ has no isolated vertices and $n$ is sufficiently large. We write $e=e_H\ge1$ and allow both signs $\epsilon\in\{-1,1\}$.

To define the tilt, write $(H_1,\ldots,H_k)$ for the full energy family of the base model, including any added graphs. If $H$ is isomorphic to some $H_j$, choose one such $j$ and vary its coefficient. Otherwise, enlarge the family to $(H_1,\ldots,H_k,H)$, set $j=k+1$, and identify $\bbeta_0$ with $(\bbeta_0,0)$. This leaves the base distribution unchanged. We continue to use $T_{\bbeta},p_{\bbeta},Y_{\bbeta}$ for the enlarged model in \eqref{eq:model}. Let $\mathbf e_j$ denote the $j$th coordinate unit vector in the relevant parameter space. Let $\sigma_{H,0}=\sigma_{H,\bbeta_0}$ and $\mu_{H,0}=\mu_{H,\bbeta_0}$. Define the parameter path by
\begin{equation}\label{md:path}
\bbeta(t)=\bbeta_0+\frac{t n^{v_H-2}}{\sigma_{H,0}}\mathbf e_j
=\bbeta_0+\frac{t}{2e p_{\bbeta_0}^{e-1}\sqrt{Nv_{\bbeta_0}}}\mathbf e_j.
\end{equation}
Write $M_H(t)=\log\mathbb E e^{tW_{H,\bbeta_0}}$ for the cumulant generating function and $a_H(t)=\sigma_{H,\bbeta(t)}/\sigma_{H,0}$ for the scale ratio. Here $v_{\bbeta_0}$ is the variance density of the base model. We consider the tilt range
\begin{equation}\label{md:allowed-path}
t=\epsilon u,\qquad 0\le u\le2L\sqrt n,
\quad\epsilon\in\{-1,1\}.
\end{equation}
By \eqref{md:path} and \eqref{eq:compact}--\eqref{eq:compact-scales}, the range \eqref{md:allowed-path} permits a coefficient displacement of order $n^{-1/2}$, compared with the $n^{-1}$ displacements in the local likelihood argument. Define the configuration function with the base mean and scale by
\[
f_{H,\bbeta_0}(y)
=\frac{|\Hom(H,G_y)|-\mu_{H,0}}{\sigma_{H,0}},
\qquad W_{H,\bbeta_0}=f_{H,\bbeta_0}(Y_{\bbeta_0}).
\]
Write the base coefficient in \eqref{eq:gamma} as
\begin{equation}\label{md:gamma}
\gamma_{H,0}=\gamma_{H,\bbeta_0}
=\gamma_{\bbeta_0}+3(e-1)\frac{\sqrt{v_{\bbeta_0}}}{p_{\bbeta_0}}.
\end{equation}
By \eqref{eq:subgraph-scale} and \eqref{md:path}, we have
\begin{equation}\label{md:scale-ratio}
a_H(t)^2=\frac{p_{\bbeta(t)}^{2e-2}v_{\bbeta(t)}}
{p_{\bbeta_0}^{2e-2}v_{\bbeta_0}}.
\end{equation}
To keep the tilt explicit, we use the quadratic approximation to the tilted mean. The following variables record the resulting centering error and the change in scale:
\begin{equation}\label{md:Yt}
\begin{aligned}
Y_t&=f_{H,\bbeta_0}(Y_{\bbeta(t)})
-t-\frac{\gamma_{H,0}t^2}{2\sqrt N},\qquad
\delta_t=M_H'(t)-t-\frac{\gamma_{H,0}t^2}{2\sqrt N},\\
U_t&=\frac{|\Hom(H,G_{Y_{\bbeta(t)}})|
-\mu_{H,\bbeta(t)}}{\sigma_{H,\bbeta(t)}}
=W_{H,\bbeta(t)}.
\end{aligned}
\end{equation}
These variables satisfy $Y_t=a_H(t)U_t+\delta_t$, because $M_H'(t)$ is the tilted mean of $f_{H,\bbeta_0}$. Thus $U_t$ uses the tilted model's own mean and scale, $a_H(t)$ records the scale change, and $\delta_t$ measures the error in the quadratic centering. Unlike the configuration $Y_{\bbeta(t)}$, $Y_t$ is scalar.

After changing measure, we compare the resulting exponentially weighted tail expectation with its normal counterpart,
\begin{equation}\label{md:Q}
Q(z)=e^{z^2/2}\overline\Phi(z)
=\int_0^\infty e^{-zu}\phi(u)\,du,\qquad z\ge0.
\end{equation}

We choose the tilt so that the deterministic centering in \eqref{md:Yt} equals the target threshold. For $\epsilon\in\{-1,1\}$ and $1\le x\le L\sqrt n$, we solve
\begin{equation}\label{md:center-equation}
t_\epsilon+\frac{\gamma_{H,0}t_\epsilon^2}{2\sqrt N}=\epsilon x.
\end{equation}
We choose the solution that tends to zero as $x\to0$. Its explicit expression and expansion are
\begin{equation}\label{md:center-root}
t_\epsilon=\frac{2\epsilon x}
{1+\sqrt{1+2\epsilon\gamma_{H,0}x/\sqrt N}},\qquad
t_\epsilon=\epsilon x-\frac{\gamma_{H,0}x^2}{2\sqrt N}
+O(x^3/N).
\end{equation}
Before using this tilt, we must check both that the root in \eqref{md:center-root} lies in the allowed interval and that the shifted model remains in a family with uniform estimates. The following lemma establishes these facts for both signs.

\begin{lemma}\label{md:path-input}
There are a fixed compact parameter family $\B_H$ and $n_0$ such that, for every base parameter, $n\ge n_0$ and $t$ in \eqref{md:allowed-path}, the tilted parameter belongs to $\B_H$ and
\[
\dKol(U_t,Z)\le C/n,\qquad
\left|\frac{\Var(|\Hom(H,G_{Y_{\bbeta(t)}})|)}{\sigma_{H,\bbeta(t)}^2}-1\right|
\le C/n,\qquad c\le a_H(t)\le C.
\]
For $1\le x\le L\sqrt n$, \eqref{md:center-equation} has a unique solution $t_\epsilon$ in $|t|\le2L\sqrt n$. It satisfies $\sgn(t_\epsilon)=\epsilon$, $|t_\epsilon|\asymp x$, and \eqref{md:center-root}.
\end{lemma}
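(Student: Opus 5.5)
The plan is to reduce everything to the uniform estimates of Theorems~\ref{thm:normal} and \ref{thm:signed}, applied to a fixed compact family that contains all tilted parameters. By \eqref{md:path} and \eqref{eq:compact}--\eqref{eq:compact-scales}, the coefficient displacement along \eqref{md:allowed-path} is bounded by
\[
\frac{|t|}{2e p_{\bbeta_0}^{e-1}\sqrt{Nv_{\bbeta_0}}}\le \frac{C L}{\sqrt n}.
\]
This tends to zero uniformly in the base parameter.

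\emph{Choice of $\B_H$.} There are two cases.
\begin{enumerate}
\item[(a)] If the base family is a compact subset of $\mathcal S_+$ or $\mathcal D$ and $H$ coincides with some $H_j$, $j\ge2$, I would take $\B_H$ to be a closed $\delta$-neighbourhood of the base set. Here $\delta>0$ is small enough that the neighbourhood stays inside the open region. This uses the strict coefficient conditions and the implicit-function argument leading to \eqref{eq:compact}. If $j=1$, only the edge coefficient moves, and the same neighbourhood works.
\item[(b)] If $H$ is a new graph, or the base family is $\B_{\mathcal J}(\eta)$, the tilt is a signed added coefficient with magnitude $O(n^{-1/2})$. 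I would place it in the added family $(\mathcal J,H)$ and use the margin from $\eta$ to $2\eta$. Then $\B_H=\B_{(\mathcal J,H)}(2\eta)$ for $n\ge n_0$.
\end{enumerate}
In case (b), $\eta$ must not depend on the added family. Theorem~\ref{thm:signed} provides exactly that, since $\eta$ depends only on $\B$ and $\cH$. For a base parameter in the Dobrushin region with $H$ new, I would instead use a small neighbourhood in $\mathcal D$, since $D(\bbeta)<1$ is an open condition and adding $H$ with tiny coefficient keeps $D<1$.

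\emph{Uniform estimates along the path.} With $\B_H$ fixed, Theorem~\ref{thm:normal} or \ref{thm:signed} gives $\dKol(U_t,Z)\le C/n$ and the relative variance bound uniformly, because $U_t=W_{H,\bbeta(t)}$. For $a_H(t)$ I would use \eqref{md:scale-ratio}. The quantities $p$ and $v$ are continuous and bounded away from $0$ and $\infty$ on $\B_H$ by \eqref{eq:compact}--\eqref{eq:compact-scales}, which gives $c\le a_H(t)\le C$.

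\emph{The root $t_\epsilon$.} Write $g(t)=t+\gamma_{H,0}t^2/(2\sqrt N)$, where $|\gamma_{H,0}|\le C$ uniformly. On $|t|\le 2L\sqrt n$,
\[
g'(t)=1+\frac{\gamma_{H,0}t}{\sqrt N}\ge 1-\frac{2CL\sqrt n}{\sqrt N}.
\]
Since $\sqrt N\asymp n$, this is at least $1/2$ for $n\ge n_0$. So $g$ is strictly increasing on that interval, and $g(\pm2L\sqrt n)=\pm2L\sqrt n\,(1+O(n^{-1/2}))$. For $n$ large this brackets $\epsilon x$ with $|x|\le L\sqrt n$, which gives existence and uniqueness. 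Since $g(0)=0$ and $g$ is increasing, $\sgn(t_\epsilon)=\epsilon$. The derivative bounds $1/2\le g'\le 3/2$ give $|t_\epsilon|\asymp x$. The closed form is the quadratic formula, choosing the branch vanishing at $x=0$; the discriminant $1+2\epsilon\gamma_{H,0}x/\sqrt N\ge 1/2$ for large $n$. Its Taylor expansion in $\gamma x/\sqrt N$ gives \eqref{md:center-root} with a uniform $O(x^3/N)$ remainder.

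The main obstacle is the bookkeeping in case (b): the tilted coefficient must be absorbed into an enlarged family whose radius is fixed independently of $n$ and of the base point. Once $\B_H$ is fixed this way, everything else is routine.
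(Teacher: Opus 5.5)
Your proposal is correct and follows essentially the paper's argument. The $O(n^{-1/2})$ displacement bound places every tilted parameter either in a fixed compact Dobrushin neighbourhood or in the outer signed family $\B_{(\mathcal J,H)}(2\eta)$, using that $\eta$ does not depend on the added family; Theorems~\ref{thm:normal}--\ref{thm:signed} and \eqref{md:scale-ratio} then give the three bounds, and monotonicity of $t+\gamma_{H,0}t^2/(2\sqrt N)$ gives the root. The one minor difference is that, for a ferromagnetic base with $H$ already in the energy, you use a small neighbourhood inside the open set $\mathcal S_+$, whereas the paper appends a copy of $H$ and applies Theorem~\ref{thm:signed} to every ferromagnetic base; both routes are valid.
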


With the tilted parameters controlled, we next expand the cumulant generating function. Its second derivative is the tilted variance divided by the squared base scale, so Lemma~\ref{md:path-input} allows us to approximate $M_H''(t)$ by $a_H(t)^2$. We expand this squared scale ratio using \eqref{md:scale-ratio} and then integrate to obtain the following estimates.

\begin{lemma}\label{md:cgf}
Uniformly for $t$ in \eqref{md:allowed-path}, we have
\begin{equation}\label{md:scaleexpansion}
a_H(t)^2=1+\frac{\gamma_{H,0}t}{\sqrt N}+O(t^2/N),\qquad
a_H(t)=1+\frac{\gamma_{H,0}t}{2\sqrt N}+O(t^2/N).
\end{equation}
The cumulant generating function and its first two derivatives satisfy
\begin{equation}\label{md:cgfexpansion}
\begin{aligned}
M_H''(t)&=1+\frac{\gamma_{H,0}t}{\sqrt N}
+O\!\left(\frac1n+\frac{t^2}{N}\right),\qquad
M_H'(t)=t+\frac{\gamma_{H,0}t^2}{2\sqrt N}
+O\!\left(\frac{|t|}{n}+\frac{|t|^3}{N}\right),\\
M_H(t)&=\frac{t^2}{2}+\frac{\gamma_{H,0}t^3}{6\sqrt N}
+O\!\left(\frac{t^2}{n}+\frac{t^4}{N}\right).
\end{aligned}
\end{equation}
\end{lemma}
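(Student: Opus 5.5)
We prove the scale expansion \eqref{md:scaleexpansion} first, by Taylor expansion in the coefficient, and then integrate the variance bound twice to obtain \eqref{md:cgfexpansion}.

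\emph{Step 1: the scale ratio.} By \eqref{md:path}, the tilted parameter differs from $\bbeta_0$ only in coordinate $j$, by $\tau=t/(2ep_{\bbeta_0}^{e-1}\sqrt{Nv_{\bbeta_0}})$. By \eqref{md:allowed-path} and \eqref{eq:compact-scales}, $|\tau|\le C/\sqrt n$. Lemma~\ref{md:path-input} keeps the path in a compact family $\B_H$ on which $p$, $1-p$ and $\rho$ are bounded below, so $g(\tau)=\log(p^{2e-2}v)$ at $\bbeta_0+\tau\mathbf e_j$ is smooth with uniformly bounded second derivative. By \eqref{md:scale-ratio}, $a_H(t)^2=\exp\{g(\tau)-g(0)\}=1+g'(0)\tau+O(\tau^2)$, and $\tau^2\asymp t^2/N$.

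It remains to check $g'(0)\tau=\gamma_{H,0}t/\sqrt N$, which is a computation with \eqref{eq:meanfield-response}. The response is $\partial_jp=2v\,e_Hp^{e_H-1}$, using $a_j=e_Hp^{e_H-1}$ for the tilted graph $H=H_j$. Next, $\partial_p\log v=(1-2p)/\{p(1-p)\}+\varphi''(p)/\rho$. Differentiating $\varphi=\Psi(2\Phi)$ gives $\varphi''(p)=2\Phi''p(1-p)+\varphi'(1-2p)\cdot 2\Phi'$, and $2\Phi'=\varphi'/\{p(1-p)\}$. Simplifying, $\partial_p\log v=v\{(1-2p)/(p^2(1-p)^2)+2\Phi''(p)\}$.

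Combining,
\[
g'(0)\tau=\Bigl\{\frac{2e-2}{p}+\partial_p\log v\Bigr\}\cdot\frac{2ve p^{e-1}t}{2ep^{e-1}\sqrt{Nv}}
=\frac{t}{\sqrt N}\Bigl\{\frac{2(e-1)\sqrt v}{p}+\gamma_{\bbeta_0}\Bigr\}.
\]
This is off from $\gamma_{H,0}$ by $(e-1)\sqrt v/p$. The discrepancy must come from the convention for $v$ versus $v(1-n^{-1})$, or from a slip in my bookkeeping, so I would recheck this identity carefully against \eqref{eq:subgraph-curvature}. This algebraic verification is the main risk.

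The second expansion in \eqref{md:scaleexpansion} then follows from $\sqrt{1+x}=1+x/2+O(x^2)$, since $|t|/\sqrt N\le C$.

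\emph{Step 2: $M_H''$.} Since $\Hom(H,\cdot)$ enters the energy linearly with coefficient $\beta_j$, the exponential family identity gives
\[
M_H''(t)=\Var(f_{H,\bbeta_0}(Y_{\bbeta(t)}))=a_H(t)^2\,\Var(U_t).
\]
Lemma~\ref{md:path-input} gives $\Var(U_t)=1+O(1/n)$ and $a_H(t)$ bounded. Combining this with Step 1 gives the first line of \eqref{md:cgfexpansion}.

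\emph{Step 3: integrate.} We have $M_H(0)=0$ and $M_H'(0)=\E W_{H,\bbeta_0}=0$. Integrating $M_H''$ from $0$ to $t$ gives the expansion of $M_H'$ with error $O(|t|/n+|t|^3/N)$. Integrating once more gives $M_H$ with error $O(t^2/n+t^4/N)$. All constants are uniform because every estimate used holds uniformly on $\B_H$ and over both signs $\epsilon$.
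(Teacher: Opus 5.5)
Your overall route is the paper's. You Taylor-expand $a_H(t)^2$ through \eqref{md:scale-ratio} on the compact family $\B_H$, write $M_H''(t)=a_H(t)^2\Var(U_t)$ and invoke Lemma~\ref{md:path-input}, then integrate twice from $M_H(0)=M_H'(0)=0$. Steps~2 and~3 are fine.

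The gap is in Step~1, in the one identity that carries the content of the lemma: the first-order coefficient of $a_H(t)^2$. As written, you obtain the wrong coefficient, and your suggested explanation cannot be right. The ratio \eqref{md:scale-ratio} is an exact ratio of mean-field quantities, so $N$ cancels and no $v$ versus $v(1-n^{-1})$ convention enters. In any case such a change would be a relative $O(n^{-1})$ effect, not an $O(1)$ change in the coefficient of $t/\sqrt N$.

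The actual error is that you computed $\partial_{\beta_j}\log v$ as $(\partial_p\log v)\,\partial_{\beta_j}p$, treating $v_{\bbeta}$ as a function of $p_{\bbeta}$ alone. But $v_{\bbeta}$ also depends on $\beta_j$ explicitly, because $\rho_{\bbeta}=1-\varphi_{\bbeta}'(p_{\bbeta})$ and $\varphi_{\bbeta}$ itself depends on $\beta_j$. The clean way to see this is to use the fixed-point relation to write $v_{\bbeta}^{-1}=1/\{p(1-p)\}-2\Phi_{\bbeta}'(p)$. Its only explicit $\beta_j$-dependence is $\partial_{\beta_j}\Phi_{\bbeta}'(u)=e(e-1)u^{e-2}$, and one gets
\[
\partial_{\beta_j}\log v=2ep^{e-1}\sqrt v\,\gamma_{\bbeta}+2e(e-1)p^{e-2}v .
\]
The second term, multiplied by your $\tau=t/(2ep^{e-1}\sqrt{Nv})$, is exactly $(e-1)\sqrt v\,t/(p\sqrt N)$, which is the discrepancy you found. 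Adding your contribution $(2e-2)\partial_{\beta_j}p/p=4e(e-1)p^{e-2}v$ gives $g'(0)=2ep^{e-1}\sqrt v\{\gamma_{\bbeta}+3(e-1)\sqrt v/p\}$. Hence $g'(0)\tau=\gamma_{H,0}t/\sqrt N$, as in \eqref{md:response}.

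Be aware also that your intermediate simplification is wrong as written. With your own formula $\varphi''(p)=2\Phi''p(1-p)+(1-2p)\varphi'(p)^2/\{p(1-p)\}$, one gets
\[
\frac{1-2p}{p(1-p)}+\frac{\varphi''(p)}{\rho}=v\left\{\frac{1-2p}{p^2(1-p)^2}+2\Phi''(p)\right\}-\frac{(1-2p)\varphi'(p)}{p(1-p)} .
\]
In the representation $\rho=1-\varphi_{\bbeta}'(p)$ that you chose, there is a second omitted explicit term. It comes from $\Psi''$ and $\partial_{\beta_j}\Phi_{\bbeta}(p)=ep^{e-1}$, and equals $2ep^{e-1}(1-2p)\varphi'(p)/\rho$ in $\partial_{\beta_j}\log v$. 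After multiplication by $\partial_{\beta_j}p$, this is exactly what the slip absorbed. So if you repaired only the $\Phi'$-term within your representation, you would again get a wrong coefficient.

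Redo the computation in the $v^{-1}$ representation above, where the $\Phi'$-term is the only explicit dependence. With that correction, the rest of your argument goes through.
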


The cumulant generating function controls the exponential factor in the change of measure. To estimate the remaining tail integral, we must also compare $Y_t$ with a normal variable. By \eqref{md:Yt}, $Y_t$ is obtained from $U_t$ by rescaling by $a_H(t)$ and shifting by $\delta_t$. The next lemma controls the effect of these changes.

\begin{lemma}\label{md:tilted-normal}
For $t$ in \eqref{md:allowed-path}, we have the following bounds for the variables in \eqref{md:Yt}.
\begin{equation}\label{md:tiltBEexact}
\sup_{y\in\mathbb R}
\left|\mathbb P(Y_t\le y)
-\Phi\!\left(\frac{y-\delta_t}{a_H(t)}\right)\right|
\le C/n,
\qquad
|\delta_t|\le C\left(\frac{|t|}{n}+\frac{|t|^3}{N}\right),
\end{equation}
and
\begin{equation}\label{md:tiltBE}
d_{\mathrm{Kol}}(Y_t,Z)
\le C\left\{\frac1n+\frac{|t|}{\sqrt N}+\frac{t^2}{N}
+\frac{|t|}{n}+\frac{|t|^3}{N}\right\}
\le C\left\{\frac{1+|t|}{n}+\frac{|t|^3}{N}\right\}.
\end{equation}
\end{lemma}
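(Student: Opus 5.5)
The plan is to obtain both bounds from Lemma~\ref{md:path-input} and Lemma~\ref{md:cgf} by elementary comparisons. The starting point is the identity $Y_t=a_H(t)U_t+\delta_t$ recorded after \eqref{md:Yt}. This identity holds because $M_H'(t)$ is the mean of $f_{H,\bbeta_0}$ under the tilted law, and that tilted law is exactly the law of $Y_{\bbeta(t)}$ by \eqref{md:path}. For this last point, note that tilting by $tW_{H,\bbeta_0}$ adds $t n^{v_H-2}|\Hom(H,G_y)|/\sigma_{H,0}$ to the Hamiltonian, up to a constant absorbed in the normalization.

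\emph{The first bound in \eqref{md:tiltBEexact}.} Since $a_H(t)\ge c>0$ by Lemma~\ref{md:path-input}, we have
\[
\Pp(Y_t\le y)=\Pp\bigl(U_t\le (y-\delta_t)/a_H(t)\bigr).
\]
The Kolmogorov bound $\dKol(U_t,Z)\le C/n$ from Lemma~\ref{md:path-input} then gives the bound directly, uniformly in $y$.

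\emph{The bound on $\delta_t$.} By definition, $\delta_t=M_H'(t)-t-\gamma_{H,0}t^2/(2\sqrt N)$. The second line of \eqref{md:cgfexpansion} states that this is $O(|t|/n+|t|^3/N)$. So the estimate is just a restatement of Lemma~\ref{md:cgf}.

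\emph{The bound \eqref{md:tiltBE}.} We use the triangle inequality:
\[
\dKol(Y_t,Z)\le C/n+\sup_y\bigl|\Phi\bigl((y-\delta_t)/a\bigr)-\Phi(y)\bigr|,\qquad a=a_H(t).
\]
For the second term, we first shift and then rescale. The shift costs at most $\|\phi\|_\infty|\delta_t|/a\le C|\delta_t|$. The rescaling costs $\sup_y|\Phi(y/a)-\Phi(y)|$. When $a$ lies in a neighborhood of one, this is at most $C|a-1|$ by \eqref{stein:normal-scale-comparison}. For general $a\in[c,C]$, the mean value theorem gives
\[
|\Phi(y/a)-\Phi(y)|\le |y|\,|1/a-1|\,\phi\bigl(y\min(1,1/a)\bigr),
\]
and this is at most $C|a-1|$ because $a$ is bounded above and below. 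By \eqref{md:scaleexpansion}, $|a-1|\le C(|t|/\sqrt N+t^2/N)$. Adding the contributions gives the first inequality in \eqref{md:tiltBE}.

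For the second inequality in \eqref{md:tiltBE}, use $\sqrt N\ge n/2$ to get $|t|/\sqrt N\le 2|t|/n$. For the quadratic term, use $t^2\le |t|+|t|^3$, which gives $t^2/N\le C(|t|/n+|t|^3/N)$.

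The argument contains no genuine obstacle, because the substance lies in Lemmas~\ref{md:path-input} and~\ref{md:cgf}. The only point needing care is uniformity of the scale comparison over the whole range $|t|\le 2L\sqrt n$, where $a_H(t)$ need not be close to one. This is handled by the two-sided bound $c\le a_H(t)\le C$ from Lemma~\ref{md:path-input}.
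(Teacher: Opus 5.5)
Your proof is correct and follows the paper's argument essentially step for step. You use the affine identity $Y_t=a_H(t)U_t+\delta_t$, the Kolmogorov bound for $U_t$ from Lemma~\ref{md:path-input}, the $M_H'$ expansion from Lemma~\ref{md:cgf}, a shift-then-rescale comparison of normal distribution functions using $c\le a_H(t)\le C$, and the same elementary absorption of $|t|/\sqrt N$ and $t^2/N$. One small slip in your justification of the identity: the Hamiltonian increment is $t|\Hom(H,G_y)|/\sigma_{H,0}$, not $t n^{v_H-2}|\Hom(H,G_y)|/\sigma_{H,0}$, because the coefficient displacement $tn^{v_H-2}/\sigma_{H,0}$ in \eqref{md:path} is multiplied by the factor $n^{2-v_H}$ from \eqref{eq:model}.
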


To turn the distributional bound into a tail-integral estimate, we compare exponentially weighted expectations over the event $U>0$. The bounds on $Q$ also control the effect of changing its argument from $x$ to $|t_\epsilon|$.

\begin{lemma}\label{md:weighted}
For the function $Q$ in \eqref{md:Q} and every $z\ge0$, we have
\begin{equation}\label{md:Qbounds}
\frac{c}{1+z}\le Q(z)\le\frac{C}{1+z},\qquad
|Q'(z)|\le\frac{C}{(1+z)^2}.
\end{equation}
If $d_{\mathrm{Kol}}(U,Z)\le d$, then, for every $z\ge0$, we also have
\begin{equation}\label{md:weightedbound}
\left|\mathbb E[e^{-zU}\mathbf1_{\{U>0\}}]-Q(z)\right|\le2d.
\end{equation}
We also have $d_{\mathrm{Kol}}(-U,Z)\le d$.
\end{lemma}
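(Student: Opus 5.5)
The plan is to prove the three parts of Lemma~\ref{md:weighted} separately; each is elementary. For \eqref{md:Qbounds}, use the Laplace representation $Q(z)=\int_0^\infty e^{-zu}\phi(u)\,du$. Since $\phi(u)\le\phi(0)$, we get $Q(z)\le\phi(0)/z$, and $Q(z)\le Q(0)=1/2$ because $Q$ is decreasing. Together these give $Q(z)\le C/(1+z)$. For the lower bound, restrict the integral to $0\le u\le 1/(1+z)$. On that range $e^{-zu}\ge e^{-1}$ and $\phi(u)\ge\phi(1)$, so $Q(z)\ge e^{-1}\phi(1)/(1+z)$. For the derivative, differentiate under the integral: $|Q'(z)|=\int_0^\infty ue^{-zu}\phi(u)\,du\le\phi(0)\int_0^\infty ue^{-zu}\,du=\phi(0)/z^2$. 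Combining this with the trivial bound $|Q'(z)|\le\E|Z|/2$ for $z\le1$ gives $|Q'(z)|\le C/(1+z)^2$.

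For \eqref{md:weightedbound}, write $g(u)=e^{-zu}\mathbf 1_{\{u>0\}}$ and note that $\E g(Z)=\int_0^\infty e^{-zu}\phi(u)\,du=Q(z)$. The key identity is the layer-cake representation, valid because $g$ takes values in $[0,1]$ and is nonincreasing on $(0,\infty)$:
\[
\E g(U)=\int_0^1\Pp(g(U)>s)\,ds=\int_0^1\Pp\bigl(0<U<\ell(s)\bigr)\,ds,
\]
where $\ell(s)=-\log(s)/z$ for $z>0$; the case $z=0$ is simply $\Pp(U>0)$ versus $1/2$. Write the same formula for $Z$ and compare the integrands. Each probability $\Pp(0<U<\ell)=\Pp(U<\ell)-\Pp(U\le0)$ differs from its normal counterpart by at most $2d$. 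This holds because the Kolmogorov bound controls $\Pp(U\le y)$, and the strict-inequality probabilities $\Pp(U<y)$ are limits of $\Pp(U\le y')$ with $y'\uparrow y$, using the continuity of $\Phi$. Integrating over $s\in[0,1]$ then gives the bound $2d$. The only point needing care is this handling of strict versus nonstrict inequalities when $U$ has atoms, which is exactly why the bound is $2d$ rather than $d$. An equivalent route is to integrate by parts in the Stieltjes sense against $F_U-\Phi$, which gives total variation of $g$ at most $2$.

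Finally, for $\dKol(-U,Z)\le d$, use $\Pp(-U\le y)=1-\Pp(U<-y)$ and $\Phi(y)=1-\Phi(-y)$. This gives $|\Pp(-U\le y)-\Phi(y)|=|\Pp(U<-y)-\Phi(-y)|$. The right-hand side is a left limit of quantities $|\Pp(U\le y')-\Phi(y')|\le d$, again by continuity of $\Phi$, so it is at most $d$. Taking the supremum over $y$ finishes the proof. I expect no real obstacle; the main care point is the atom issue in the middle step.
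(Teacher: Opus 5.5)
Your proposal is correct and follows the paper's proof essentially step for step. The $Q$ bounds and the left-limit reflection argument are the same, and your layer-cake formula becomes the paper's Tonelli identity $\E[e^{-zU}\mathbf 1_{\{U>0\}}]=\Pp(U>0)-z\int_0^\infty e^{-zy}\Pp(U>y)\,dy$ under the substitution $s=e^{-zy}$; that form involves only $\Pp(U>y)$ and so needs no left limits in the middle step. One small correction to your commentary: the factor $2$ comes from the two pieces of $g$ (its jump at $0$ plus its decrease on $(0,\infty)$, total variation $2$), not from atoms, since strict and nonstrict Kolmogorov bounds coincide by continuity of $\Phi$.
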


\subsection{Proof of Theorem~\ref{thm:md}}\label{sec:md-proof}

We are now ready to combine the auxiliary estimates in the proof of Theorem~\ref{thm:md}. We first derive the change-of-measure identity \eqref{md:tailidentity}, which separates the exponential factor from the weighted tail integral. Lemmas~\ref{md:cgf} and \ref{md:tilted-normal} estimate these two parts, while Lemma~\ref{md:weighted} converts the distributional bound into the required integral comparison.

\begin{proof}[Proof of Theorem~\ref{thm:md}]
Fix a base parameter $\bbeta_0$ and a sign $\epsilon\in\{-1,1\}$. We first take $H$ without isolated vertices and $n$ sufficiently large. We treat the remaining sample sizes and graphs with isolated vertices at the end of the proof.

\proofstep{1} We start with thresholds $1\le x\le L\sqrt n$ and express the tail under the tilted measure. Choose $t_\epsilon$ as in \eqref{md:center-equation}, whose existence in the allowed range follows from Lemma~\ref{md:path-input}. This choice puts the tail event at threshold zero, because \eqref{md:Yt} and \eqref{md:center-equation} imply
\[
\epsilon f_{H,\bbeta_0}(Y_{\bbeta(t_\epsilon)})-x
=\epsilon Y_{t_\epsilon}.
\]
Let $t=t_\epsilon$. Under the tilted measure, $f_{H,\bbeta_0}(Y_{\bbeta(t)})=\epsilon x+Y_t$ and $tf_{H,\bbeta_0}(Y_{\bbeta(t)})=|t|x+|t|\epsilon Y_t$. By \eqref{eq:model} and \eqref{md:path}, the inverse density ratio is $p_{\bbeta_0}(y)/p_{\bbeta(t)}(y)=\exp\{M_H(t)-tf_{H,\bbeta_0}(y)\}$. Applying this ratio to each configuration, we obtain
\begin{equation}\label{md:tailidentity}
\mathbb P(\epsilon W_{H,\bbeta_0}>x)
=e^{M_H(t_\epsilon)-|t_\epsilon|x}
\mathbb E\left[e^{-|t_\epsilon|\epsilon Y_{t_\epsilon}}
\mathbf1_{\{\epsilon Y_{t_\epsilon}>0\}}\right].
\end{equation}
The expectation on the right is under the tilted measure, whereas the probability on the left is under the base measure. We compare this weighted expectation first with $Q(|t_\epsilon|)$, and then compare $Q(|t_\epsilon|)$ with $Q(x)$. Dividing \eqref{md:tailidentity} by $\Phibar(x)=e^{-x^2/2}Q(x)$ from \eqref{md:Q}, we obtain
\begin{equation}\label{md:three-factors}
\frac{\mathbb P(\epsilon W_{H,\bbeta_0}>x)}{\overline\Phi(x)}
=\exp\{M_H(t_\epsilon)-|t_\epsilon|x+x^2/2\}
\times
\frac{\mathbb E[e^{-|t_\epsilon|\epsilon Y_{t_\epsilon}}
\mathbf1_{\{\epsilon Y_{t_\epsilon}>0\}}]}
{Q(|t_\epsilon|)}
\frac{Q(|t_\epsilon|)}{Q(x)}.
\end{equation}
The denominators in \eqref{md:three-factors} are positive. Both \eqref{md:tailidentity} and \eqref{md:three-factors} use the original strict tail event.

\proofstep{2} We now estimate the three factors in \eqref{md:three-factors}, keeping $1\le x\le L\sqrt n$. For the exponential factor, we substitute the cumulant expansion \eqref{md:cgfexpansion} and use $|t_\epsilon|x=\epsilon t_\epsilon x$ to obtain
\[
M_H(t_\epsilon)-|t_\epsilon|x+\frac{x^2}{2}
=\frac{(t_\epsilon-\epsilon x)^2}{2}
+\frac{\gamma_{H,0}t_\epsilon^3}{6\sqrt N}
+O\!\left(\frac{x^2}{n}+\frac{x^4}{N}\right).
\]
By \eqref{md:center-root}, $|t_\epsilon-\epsilon x|\le Cx^2/\sqrt N$ and $|t_\epsilon|\le Cx$. Thus the square is $O(x^4/N)$, and factoring the difference of cubes gives $|t_\epsilon^3-\epsilon x^3|/\sqrt N\le Cx^4/N$. Hence
\begin{equation}\label{md:rateexpansion}
M_H(t_\epsilon)-|t_\epsilon|x+\frac{x^2}{2}
=\frac{\epsilon\gamma_{H,0}x^3}{6\sqrt N}
+O\!\left(\frac{x^2}{n}+\frac{x^4}{N}\right).
\end{equation}

For the second factor in \eqref{md:three-factors}, combine \eqref{md:tiltBE} and \eqref{md:weightedbound}, using the reflection bound in Lemma~\ref{md:weighted} when $\epsilon=-1$. After division by $Q(|t_\epsilon|)\ge c/(1+|t_\epsilon|)$ from \eqref{md:Qbounds}, the relative error is at most
\[
C(1+|t_\epsilon|)
\left\{\frac{1+|t_\epsilon|}{n}
+\frac{|t_\epsilon|^3}{N}\right\}.
\]
Since $x\ge1$ and $|t_\epsilon|\asymp x$ by \eqref{md:center-root}, this proves
\begin{equation}\label{md:prefactor1}
\frac{\mathbb E[e^{-|t_\epsilon|\epsilon Y_{t_\epsilon}}
\mathbf1_{\{\epsilon Y_{t_\epsilon}>0\}}]}{Q(|t_\epsilon|)}
=1+O\!\left(\frac{1+x^2}{n}+\frac{x^4}{N}\right).
\end{equation}

For the last factor in \eqref{md:three-factors}, namely $Q(|t_\epsilon|)/Q(x)$, \eqref{md:center-root} implies $\bigl||t_\epsilon|-x\bigr|\le Cx^2/\sqrt N$. Using \eqref{md:Qbounds} and $|t_\epsilon|\asymp x$, we obtain
\begin{equation}\label{md:prefactor2}
\left|\frac{Q(|t_\epsilon|)}{Q(x)}-1\right|
\le C\frac{\bigl||t_\epsilon|-x\bigr|}{1+x}
\le C\frac{x^2}{(1+x)\sqrt N}\le C\frac xn.
\end{equation}

\proofstep{3} We first combine the three estimates for $1\le x\le L\sqrt n$, then include $0\le x\le1$ to prove
\begin{equation}\label{md:corrected}
\frac{\mathbb P(\epsilon W_{H,\bbeta_0}>x)}{\overline\Phi(x)}
=\exp\!\left\{\frac{\epsilon\gamma_{H,0}x^3}{6\sqrt N}\right\}
\left[1+O\!\left(\frac{1+x^2}{n}+\frac{x^4}{N}\right)\right],
\quad 0\le x\le L\sqrt n.
\end{equation}
For the first range, we use the common error bound
\begin{equation}\label{md:error-function}
R_n(x)=\frac{1+x^2}{n}+\frac{x^4}{N}
\le1+L^2+4L^4,\qquad 0\le x\le L\sqrt n.
\end{equation}
Combining \eqref{md:rateexpansion}, \eqref{md:prefactor1} and \eqref{md:prefactor2}, the product in \eqref{md:three-factors}, after removal of the cubic exponential, has the form $e^u(1+r_1)(1+r_2)$ with $|u|+|r_1|+|r_2|\le CR_n(x)$. The inequalities $|e^u-1|\le e^{|u|}|u|$ and $|(1+r_1)(1+r_2)-1|\le|r_1|+|r_2|+|r_1r_2|$, together with $R_n(x)^2\le(1+L^2+4L^4)R_n(x)$, give
\begin{equation}\label{md:bounded-product}
|e^u(1+r_1)(1+r_2)-1|\le C_LR_n(x).
\end{equation}
Substituting \eqref{md:bounded-product} into \eqref{md:three-factors} proves \eqref{md:corrected} for $1\le x\le L\sqrt n$.

For $0\le x\le1$, the normal tail is bounded away from zero, so Theorem~\ref{thm:normal} or \ref{thm:signed} gives relative error $O(n^{-1})$. For the lower tail, use the reflection bound in Lemma~\ref{md:weighted}. The cubic exponent is also $O(n^{-1})$ on this interval, so \eqref{md:corrected} holds throughout $0\le x\le L\sqrt n$ for all sufficiently large $n$.

To include $2\le n<n_0$, note that $\Phibar(x)\ge\Phibar(L\sqrt{n_0})>0$ and that the absolute cubic exponent is bounded by a fixed $C_{L,n_0}$. Since probabilities are at most one,
\begin{equation}\label{md:finite-n-tail}
\left|
\frac{\mathbb P(\epsilon W_{H,\bbeta_0}>x)}
{\overline\Phi(x)\exp\{\epsilon\gamma_{H,0}x^3/(6\sqrt N)\}}-1
\right|
\le 1+\frac{e^{C_{L,n_0}}}{\overline\Phi(L\sqrt{n_0})}.
\end{equation}
Since \eqref{md:error-function} gives $R_n(x)\ge1/n\ge1/n_0$, \eqref{md:finite-n-tail} is absorbed by a common increase in the error constant. This proves the explicit-scale versions for graphs without isolated vertices.

\proofstep{4} We complete the proof by passing to the true standard deviation and then removing the restriction on isolated vertices. Both steps amount to changing the threshold by $O(x/n)$. We first estimate how such a change affects the normal tail and the cubic factor. Suppose that $H$ has no isolated vertices and let
\[
r_{H,n}=\widehat\sigma_{H,\bbeta_0}/\sigma_{H,\bbeta_0}=1+O(n^{-1}).
\]
This follows from \eqref{stein:variance-calibration} and \eqref{stein:scale-ratio}, and positivity for $n\ge v_H$ is part of \eqref{stein:finite-true-bounds}. Therefore,
\[
\mathbb P(\epsilon\widehat W_{H,\bbeta_0}>x)
=\mathbb P(\epsilon W_{H,\bbeta_0}>r_{H,n}x).
\]
Put $y=r_{H,n}x$, so $|y-x|\le Cx/n$. By \eqref{md:Q} and \eqref{md:Qbounds}, we obtain
\begin{equation}\label{md:variancechange}
\frac{\overline\Phi(y)}{\overline\Phi(x)}
=1+O\!\left(\frac{x+x^2}{n}\right),\qquad
\frac{|y^3-x^3|}{\sqrt N}\le C\frac{x^3}{n\sqrt N}.
\end{equation}
To prove the first estimate in \eqref{md:variancechange}, use $|(\log Q)'(u)|\le C/(1+u)$ from \eqref{md:Qbounds}. For large $n$, $r_{H,n}\in[1/2,2]$, so $1+u\asymp1+x$ between $x$ and $y$. Hence
\begin{equation}\label{md:normal-tail-rescaling}
\left|\log\frac{\overline\Phi(y)}{\overline\Phi(x)}\right|
\le \frac12|y^2-x^2|
+\left|\int_x^y\frac{Q'(u)}{Q(u)}\,du\right|
\le C\left\{\frac{x^2}{n}+\frac{x}{n(1+x)}\right\}
\le C\frac{x+x^2}{n}.
\end{equation}
The right-hand side of \eqref{md:normal-tail-rescaling} is bounded for $x\le L\sqrt n$, so exponentiation gives the first part of \eqref{md:variancechange}. The second follows from $|y^3-x^3|=x^3|r_{H,n}^3-1|\le Cx^3/n$.

We apply \eqref{md:corrected} at $y=r_{H,n}x$, with $2L$ in place of $L$. By \eqref{md:error-function} and \eqref{md:variancechange},
\[
R_n(y)\le CR_n(x),\qquad
\frac{x+x^2}{n}+\frac{x^3}{n\sqrt N}\le C_LR_n(x),
\quad 0\le x\le L\sqrt n.
\]
Here $x/\sqrt N\le C_Ln^{-1/2}$ by \eqref{eq:edges}. Thus the changes in the normal tail, cubic exponent and error term are all $O(R_n(x))$. Combining \eqref{md:corrected} and \eqref{md:variancechange} using \eqref{md:bounded-product} proves the true-scale expansion with the same coefficient for both signs. The remaining finitely many sample sizes with $n\ge v_H$ are covered by \eqref{md:finite-n-tail}.

For isolated vertices, let $H_0,b_n$ be as in \eqref{stein:isolates}. We use the same tail-rescaling argument with
\[
y=x/b_n,\qquad |y-x|\le C_Hx/n,\qquad
R_n(y)\le C_HR_n(x)
\]
for large $n$, by \eqref{stein:isolates} and \eqref{md:error-function}. Since $e_H=e_{H_0}$, the coefficients in \eqref{eq:gamma} agree. Applying \eqref{md:corrected} to $H_0$ and \eqref{md:variancechange}--\eqref{md:normal-tail-rescaling} with scale $b_n^{-1}$, then combining the errors by \eqref{md:bounded-product}, proves the claim for $H$. The true-standardized variables coincide by \eqref{stein:isolates}. The bound \eqref{md:finite-n-tail} covers all remaining sample sizes, including $n<v_H$ for the explicit-scale statement.
\end{proof}

\appendix
\section{Variance, covariance and moment estimates}\label{sec:ferro}

To prove the remaining auxiliary lemmas, we need variance and covariance inequalities under the Gibbs measure. We first derive deterministic coordinate bounds, then use Glauber dynamics to establish a Poincar\'e inequality and a covariance estimate. Their applications to local fields and the edge-count remainder will provide the moment bounds for $f_H$, $R_H$ and $K_H$ used in Appendix~\ref{sec:normal-lemmas}.

Outside the proof of the perturbation lemma, $\bbeta$ denotes the actual parameter, including the added coefficients. On each fixed compact family, \eqref{eq:compact}--\eqref{eq:compact-scales} follow from the deterministic fixed-point argument in Step~1 of the proof of Theorem~\ref{thm:residual}; Lemma~\ref{lem:uniform-input} supplies the required fixed-point properties for signed perturbations. We write $Y=Y_{\bbeta}$, $T=T_{\bbeta}$, $p=p_{\bbeta}$, $\widetilde p=\widetilde p_{\bbeta}$, $\mu=\mu_{\bbeta}$, $\rho=\rho_{\bbeta}$, $\sigma=\sigma_{\bbeta}$ and $\varphi=\varphi_{\bbeta}$.

\subsection{Coordinate estimates}\label{sec:coordinate-bounds}

We begin with differences of the energy, which determine the conditional probabilities in \eqref{eq:conditional}. For a fixed host edge $s$, we choose one of the $e_j$ source edges and one of its two endpoint orientations. In \eqref{eq:count}, the remaining vertices then have at most $n^{v_j-2}$ images. Hence
\begin{equation}
\sup_{n\ge2}\sup_{s,y}|L_{s,\bbeta}(y)|
\le2\sum_{j=1}^k|\beta_j|e_j\le C.
\label{pi:field-bound}
\end{equation}
Furthermore, for some fixed $c>0$,
\begin{equation}
c\le q_{s,\bbeta}(y)\le1-c
\qquad(n\ge2,\ s\in\cI,\ y\in\{0,1\}^{\cI}).
\label{pi:conditional-bounds}
\end{equation}
For a fixed simple graph $H$ and $s\in\cI$, let $\Hom(H,G_y,s)$ consist of the following injective maps $\iota:\mathcal V(H)\hookrightarrow[n]$. One edge of $H$ must map to $s$, and every other edge must map to an edge of $G_y$. The edge $s$ itself need not be present in $G_y$. We call $|\Hom(H,G_y,s)|$ the rooted count (see Fang, Liu, Shao and Zhao \cite[equation (5.7)]{FLSZ}). Since an injection maps at most one source edge to $s$, \eqref{eq:count} and \eqref{eq:force} give
\begin{equation}\label{loc:rootcount}
|\Hom(H,G_y,s)|=\Delta_s|\Hom(H,G_y)|
=\sum_{a\in\mathcal E(H)}
\sum_{\substack{\iota:\mathcal V(H)\hookrightarrow[n]\\\iota(a)=s}}
\prod_{b\in\mathcal E(H)\setminus\{a\}}y_{\iota(b)}.
\end{equation}
The root coordinate is removed from the product, so this function does not depend on $y_s$. For $n\ge v_H$,
\begin{equation}\label{loc:rootcountbound}
0\le |\Hom(H,G_y,s)|
\le2e_H\prod_{r=2}^{v_H-1}(n-r)\le2e_H n^{v_H-2}.
\end{equation}
It vanishes when $n<v_H$. We repeatedly use
\begin{equation}\label{loc:fallingcomparison}
0\le1-\frac{\prod_{r=2}^{v_H-1}(n-r)}{n^{v_H-2}}
=1-\prod_{r=2}^{v_H-1}(1-r/n)
\le\frac1n\sum_{r=2}^{v_H-1}r\le\frac{C_H}{n}.
\end{equation}
For distinct host edges $s,r$, another difference gives
\begin{equation}\label{loc:double-root-count}
\Delta_{sr}|\Hom(H,G_y)|
=\sum_{\substack{a,b\in\mathcal E(H)\\a\ne b}}
\sum_{\substack{\iota:\mathcal V(H)\hookrightarrow[n]\\
\iota(a)=s,\ \iota(b)=r}}
\prod_{d\in\mathcal E(H)\setminus\{a,b\}}y_{\iota(d)}.
\end{equation}
The two host edges fix three vertices if they meet and four otherwise. There are only a fixed number of choices of source edges and endpoint orientations. Multiplying these counts by $|\beta_j|n^{2-v_j}$ and summing gives
\begin{equation}\label{loc:doublecount}
\|\Delta_rL_s\|_\infty\le
\begin{cases}C/n,&|r\cap s|=1,\\ C/n^2,&r\cap s=\varnothing,
\end{cases}
\quad(r\ne s),\qquad \Delta_sL_s=0.
\end{equation}

We write $\mathbf0$ and $\mathbf1$ for the empty and complete graph configurations in $\{0,1\}^{\cI}$, reserving $\mathbf1_{\{\cdot\}}$ for indicators. For deterministic bounds on coordinate differences, let $|\bbeta|$ be the coordinatewise absolute value and define
\begin{equation}\label{resp:Adefinition}
A_{ir}=\Delta_{ir}T_{|\bbeta|}(\mathbf1)\quad(i\ne r),\qquad A_{ii}=0.
\end{equation}
The deterministic array $A$ is symmetric and nonnegative, with $\|\Delta_{ir}T_{\bbeta}\|_\infty\le A_{ir}$. Absolute coefficients are used for these pointwise bounds, while expectations retain the actual parameter $\bbeta$. Marking a second source edge in \eqref{loc:rootcount}, we obtain, for large $n$,
\begin{equation}\label{resp:rowsum}
\sum_{r=1}^N A_{ir}
=2\sum_{j=2}^k|\beta_j|e_j(e_j-1)
\frac{\prod_{a=2}^{v_j-1}(n-a)}{n^{v_j-2}}
\le2\Phi_{|\bbeta|}'(1)=4D(\bbeta)\le C.
\end{equation}
The inequality holds for all $n$, because terms with no injection vanish. Combining \eqref{resp:rowsum} with \eqref{loc:doublecount} and $\Psi'\le1/4$, we obtain
\begin{equation}\label{resp:matrixbounds}
\max_{i,r}A_{ir}\le C/n,\qquad
\max_i\sum_{r=1}^N A_{ir}^2\le C/n,\qquad
\|\Delta_rq_i\|_\infty\le A_{ir}/4.
\end{equation}
Symmetry gives the same column bounds. Marking a third source edge also gives
\begin{equation}\label{resp:thirdsum}
\sum_{j=1}^N \|\Delta_{irj}T_{\bbeta}\|_\infty\le C_{\cH}A_{ir}.
\end{equation}
Indeed, after two source edges have been marked, there are at most $\max_j e_j$ remaining choices for a third edge in each monomial. Apply this observation to $T_{|\bbeta|}$, whose mixed differences have nonnegative coefficients and attain their maxima at $\mathbf1$.

\subsection{Variance and covariance bounds}\label{sec:heat-bath}\label{sec:signed-input}

We now pass from coordinate bounds to inequalities under the Gibbs measure. After defining single-edge Glauber dynamics, we show in Lemma~\ref{lem:poincare} how contraction over a time block yields the variance and covariance bounds. Lemmas~\ref{lem:dobrushin-input} and \ref{lem:uniform-input} then verify the required contraction.

We use the discrete-time single-edge Glauber dynamics of Fang, Liu, Shao and Zhao \cite[equation (5.34)]{FLSZ}, with the actual model parameter $\bbeta$. Let $(S_r)_{r\ge1}$ be independent random edges, each uniform on $\cI$. Independently, let $(U_r)_{r\ge1}$ be independent random variables, each uniform on $[0,1]$. Starting from $X_0^{y,\bbeta}=y$, we update the chain at each step by
\[
(X_{r+1}^{y,\bbeta})_s=
\begin{cases}
\ind_{\{U_{r+1}\le q_{s,\bbeta}(X_r^{y,\bbeta})\}},&s=S_{r+1},\\
(X_r^{y,\bbeta})_s,&s\ne S_{r+1}.
\end{cases}
\]
Thus only the selected edge is resampled, using its conditional probability from \eqref{eq:conditional}. The same update edges and uniforms couple chains from different initial states or with different parameters. Each chain evaluates its own conditional probability at its current state.

For a configuration function $F$, the expected value after one update is given by the Markov operator $P=P_{\bbeta,n}$, with the convention of Fang, Liu, Shao and Zhao \cite[definition following (5.37)]{FLSZ},
\[
P_{\bbeta,n}F(y)=\frac1N\sum_{s\in\cI}
\bigl\{q_{s,\bbeta}(y)F(y^{(s,1)})+(1-q_{s,\bbeta}(y))F(y^{(s,0)})\bigr\}.
\]
Thus $P_{\bbeta,n}$ acts on functions of the configuration. Resampling from a conditional distribution makes it reversible and invariant under $p_{\bbeta}(\cdot)$. Let $I$ denote the identity operator. Then $P^0=I$ and $P_{\bbeta,n}^rF(y)=\E F(X_r^{y,\bbeta})$ for integers $r\ge0$. At fixed $\bbeta$, we write $X_r^y=X_r^{y,\bbeta}$.

For configurations $y,z$, their Hamming distance counts the edge coordinates on which they differ and is given by
\[
d_{\rm H}(y,z)=\sum_{i=1}^N|y_i-z_i|.
\]
The corresponding Lipschitz seminorm of a configuration function $F$ is
\[
\Lip_{\rm H}(F)=\sup_{y\ne z}\frac{|F(y)-F(z)|}{d_{\rm H}(y,z)}
=\max_{1\le i\le N}\|\Delta_iF\|_\infty.
\]
The upper bound follows by changing the differing coordinates one at a time, and the reverse inequality follows by considering adjacent configurations. To pass from contraction to a spectral gap and then to a Poincar\'e inequality, we use the standard finite-state arguments in Levin and Peres \cite[Theorem 13.1 and Lemma 13.7]{LP2017}. The relation between Glauber contraction and the Poincar\'e inequality for ferromagnetic ERGMs is also used by Ganguly and Nam \cite[Theorem 2.1]{GN}. Summing the coordinate differences over time will additionally yield the following covariance bound.

\begin{lemma}\label{lem:poincare}
For every fixed parameter family in Theorem~\ref{thm:normal} or \ref{thm:signed}, there is $\kappa<\infty$ such that, for all $n\ge2$ and all configuration functions $F,G$, we have the Poincar\'e inequality
\begin{equation}\label{eq:poincare}
\Var(F(Y))\le\kappa\sum_{i=1}^N \E|\Delta_iF(Y)|^2
\end{equation}
and the covariance bound
\begin{equation}\label{eq:covariance-bound}
|\Cov(F(Y),G(Y))|
\le\kappa\left(\sum_{i=1}^N \|\Delta_iF\|_\infty\right)
\max_j\|\Delta_jG\|_\infty.
\end{equation}
If $\B\subset\mathcal D$ and $\delta=1-\sup_{\bbeta\in\B}D(\bbeta)>0$, one may take $\kappa=(4\delta)^{-1}$.
\end{lemma}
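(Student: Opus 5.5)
We obtain both inequalities from a single contraction property of the coupled Glauber dynamics over a time block. The property we aim for is this: there are $\lambda\in(0,1)$ and a block length $m\asymp N$ such that, for all configurations $y,z$,
\[
\E\, d_{\rm H}(X_m^{y},X_m^{z})\le\lambda\, d_{\rm H}(y,z),
\]
uniformly over $n$ and over the parameter family. We take this as the input verified by Lemmas~\ref{lem:dobrushin-input} and \ref{lem:uniform-input}. In the Dobrushin case it holds already at one step with the explicit rate
\[
\E\, d_{\rm H}(X_1^y,X_1^z)\le\Bigl(1-\frac{1-D(\bbeta)}{N}\Bigr)d_{\rm H}(y,z).
\]
This follows from the coupling with common uniforms together with $\|\Delta_rq_i\|_\infty\le A_{ir}/4$ from \eqref{resp:matrixbounds} and the row sum bound \eqref{resp:rowsum}, which give $\sum_r A_{ir}/4\le D(\bbeta)$. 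Path coupling reduces the claim to adjacent configurations. Because $\Lip_{\rm H}$ is the maximum single-coordinate difference, contraction yields $\Lip_{\rm H}(P^rF)\le C\lambda^{\lfloor r/m\rfloor}\Lip_{\rm H}(F)$. In the Dobrushin case this sharpens to $(1-\delta/N)^r\Lip_{\rm H}(F)$.

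\textbf{Covariance bound.} We write $\Cov(F,G)=\sum_{r\ge0}\{\E[F\,P^rG]-\E[F\,P^{r+1}G]\}$, using reversibility and the fact that $P^rG\to\E G$ by ergodicity. Each summand equals $\E[F(I-P)P^rG]$, which is the Dirichlet form
\[
\mathcal E(F,P^rG)=\frac1{2N}\sum_s\E\bigl[q_s(1-q_s)\,\Delta_sF\,\Delta_sP^rG\bigr]\times\text{(const)}.
\]
The heat-bath Dirichlet form for single-site resampling is $\frac1N\sum_s\E[q_s(1-q_s)\Delta_sF\Delta_sG']$, and $q_s(1-q_s)\le1/4$. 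It follows that
\[
|\Cov(F,G)|\le\frac1{4N}\sum_s\|\Delta_sF\|_\infty\sum_{r\ge0}\Lip_{\rm H}(P^rG).
\]
Summing the geometric decay gives $\sum_r\Lip_{\rm H}(P^rG)\le C N\max_j\|\Delta_jG\|_\infty$. This proves \eqref{eq:covariance-bound}. In the Dobrushin case the sum equals $N/\delta$, which gives $\kappa=(4\delta)^{-1}$.

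\textbf{Poincaré inequality.} Contraction in Hamming distance implies, by Levin--Peres Theorem~13.1, that every nontrivial eigenvalue of the reversible operator $P$ satisfies $|\lambda_\ast|\le1-\gamma$ with $\gamma\ge c/N$; in the Dobrushin case $\gamma\ge\delta/N$. Applying this to an eigenfunction $F$ uses $\Lip_{\rm H}(P^rF)=|\lambda_\ast|^r\Lip_{\rm H}(F)$. The variational characterization of the gap then gives $\Var F\le\gamma^{-1}\mathcal E(F,F)$. Since
\[
\mathcal E(F,F)=\frac1N\sum_s\E\bigl[q_s(1-q_s)(\Delta_sF)^2\bigr]\le\frac1{4N}\sum_s\E(\Delta_sF)^2,
\]
we obtain \eqref{eq:poincare} with $\kappa=(4N\gamma)^{-1}$, which is uniformly bounded and equals $(4\delta)^{-1}$ under the Dobrushin condition.

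\textbf{Main obstacle.} The hard step is obtaining uniform block contraction on compact subsets of $\mathcal S_+$ and their signed perturbations, where one-step Dobrushin contraction fails. We defer this to Lemma~\ref{lem:uniform-input}. Here the only care needed is to track that all constants depend on the family only through $m/N$ and $\lambda$. We also need the block version of the gap argument: $P^m$ has gap at least $1-\lambda$, which gives $P$ gap $\ge(1-\lambda)/m\gtrsim1/N$, since $(1-\gamma)^m\le\lambda$.
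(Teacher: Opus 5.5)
Your route is the paper's. You use block contraction of $\Lip_{\rm H}$ under the common-uniform coupling, and an eigenfunction argument (Levin--Peres) giving a spectral gap of order $1/N$. For the Poincar\'e inequality you use the heat-bath Dirichlet identity $\langle F,(I-P)G\rangle=N^{-1}\sum_i\E[q_i(1-q_i)\Delta_iF\,\Delta_iG]$; your stray $\frac1{2N}\times$(const) should just be $\frac1N$, as you then say. For the covariance bound you telescope this Dirichlet form, summing $\Lip_{\rm H}(P^rG)$ to $O(N)\Lip_{\rm H}(G)$, and $\sum_r(1-\delta/N)^r=N/\delta$ yields $\kappa=(4\delta)^{-1}$ on $\mathcal D$.

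There is one genuine gap: the lemma is claimed for all $n\ge2$, but your input does not hold uniformly in $n$.
\begin{itemize}
\item Lemma~\ref{lem:dobrushin-input} holds for every $n$, so your Dobrushin case is complete.
\item Lemma~\ref{lem:uniform-input}, which also supplies the unperturbed $\mathcal S_+$ case, gives \eqref{pi:block-seminorm} only for sufficiently large $n$. Its proof needs the Ganguly--Nam relaxation estimate and error terms like $O(\delta+n^{-1})$ in \eqref{pi:column-contraction}.
\end{itemize}
Hence, on $\mathcal S_+$ and the signed families, your proof leaves finitely many sample sizes uncovered, contrary to your assertion that the cited lemmas verify contraction ``uniformly over $n$''.

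The paper treats these $n$ separately. For fixed $n$, the density ratio $p_{\bbeta}(y)/\nu_n(y)$ against the product Bernoulli$(1/2)$ measure is bounded between $m_n>0$ and $M_n<\infty$ uniformly over the compact family. Variance tensorization under $\nu_n$ then gives \eqref{eq:poincare} with constant $M_n/(4m_n)$. Cauchy--Schwarz gives \eqref{eq:covariance-bound} with an extra factor $\sqrt N$, which is harmless for bounded $n$, and one takes the maximum over these $n$. Any argument giving finite, parameter-uniform constants for each fixed $n$ would also do.

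A smaller point: your bound $\Lip_{\rm H}(P^rF)\le C\lambda^{\lfloor r/m\rfloor}\Lip_{\rm H}(F)$ also needs control inside a block. That is the second inequality in \eqref{pi:block-seminorm}, or the one-step growth bound \eqref{pi:global-coupling-growth}, and it should be listed as part of your input alongside the block contraction.
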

\begin{proof}[Proof of Lemma~\ref{lem:poincare}]
The two inequalities use different consequences of the same contraction estimate. For the Poincar\'e inequality, we pass through the spectral gap of the Markov operator. For the covariance bound, we sum the coordinate differences of its iterates over time. The common starting point is the following estimate, which is proved in Lemmas~\ref{lem:dobrushin-input} and \ref{lem:uniform-input} without using the present lemma. For all sufficiently large $n$, there are integers $1\le\tau_n\le C_0N$ and a common constant $K<\infty$ such that
\begin{equation}\label{pi:block-seminorm}
\Lip_{\rm H}(P^{\tau_n}G)\le\tfrac12\Lip_{\rm H}(G),\qquad
\max_{0\le r<\tau_n}\Lip_{\rm H}(P^rG)\le K\Lip_{\rm H}(G).
\end{equation}
The unperturbed ferromagnetic case is included by setting the added coefficients in Lemma~\ref{lem:uniform-input} equal to zero.

For fixed $n$, work in $L^2(p_{\bbeta}(\cdot))$ with inner product $\langle F,G\rangle=\E[F(Y)G(Y)]$. The single-coordinate conditional expectation operators satisfy
\[
P_iF(y)=\E[F(Y)\mid Y_{-i}=y_{-i}],\qquad P=\frac1N\sum_{i=1}^NP_i.
\]
Each $P_i$ is an orthogonal projection, so $P$ is self-adjoint and positive semidefinite, with spectrum in $[0,1]$. By \eqref{pi:conditional-bounds}, single-edge changes have positive transition probabilities and connect all configurations. Hence the chain is irreducible, constants form the eigenspace for eigenvalue $1$, and the mean-zero subspace is invariant under $P$.

Every function on this finite space has finite Hamming Lipschitz seminorm, so we can apply \eqref{pi:block-seminorm} to an eigenfunction. If $PF=\lambda F$ and $F$ is nonconstant, then $\Lip_{\rm H}(F)>0$ because single-edge paths connect all configurations. Hence
\[
\lambda^{\tau_n}\Lip_{\rm H}(F)=\Lip_{\rm H}(P^{\tau_n}F)\le\tfrac12\Lip_{\rm H}(F),\quad
\tfrac12\le1-\lambda^{\tau_n}=(1-\lambda)\sum_{r=0}^{\tau_n-1}\lambda^r\le\tau_n(1-\lambda).
\]
Let $\operatorname{gap}(P)$ denote one minus the largest eigenvalue on the mean-zero subspace. Since $\tau_n\le C_0N$, the eigenfunction estimate gives
\begin{equation}\label{pi:spectral-gap}
\operatorname{gap}(P)\ge\frac1{2\tau_n}\ge\frac1{2C_0N}.
\end{equation}
To relate this gap to coordinate fluctuations, condition on $Y_{-i}$ in \eqref{eq:conditional}. Only $Y_i$ remains random, with success probability $q_i$, and the differences between the two endpoint values are $\Delta_iF$ and $\Delta_iG$ by \eqref{eq:force}. Hence
\[
\langle F,(I-P_i)G\rangle
=\E\{\Cov(F(Y),G(Y)\mid Y_{-i})\}
=\E\{q_i(1-q_i)\Delta_iF\,\Delta_iG\}.
\]
Averaging over $i$ yields the bilinear Dirichlet identity
\begin{equation}\label{pi:dirichlet}
\langle F,(I-P)G\rangle
=\frac1N\sum_{i=1}^N\E\{q_i(1-q_i)\Delta_iF\,\Delta_iG\}.
\end{equation}
Let $F-\E F(Y)$ be expanded in an orthonormal eigenbasis of the mean-zero subspace. Then $\langle F,(I-P)F\rangle\ge\operatorname{gap}(P)\Var(F(Y))$. Combining this with \eqref{pi:spectral-gap}--\eqref{pi:dirichlet} and $q_i(1-q_i)\le1/4$, we obtain
\[
\Var(F(Y))
\le\frac1{4N\operatorname{gap}(P)}
\sum_{i=1}^N\E|\Delta_iF(Y)|^2
\le C\sum_{i=1}^N\E|\Delta_iF(Y)|^2.
\]
This proves \eqref{eq:poincare} for large $n$.

The Poincar\'e inequality is now established. To obtain \eqref{eq:covariance-bound} with the stated coordinate norms, we also use the bound within each time block. If $r=j\tau_n+s$ with $0\le s<\tau_n$, then \eqref{pi:block-seminorm} implies $\Lip_{\rm H}(P^rG)\le K2^{-j}\Lip_{\rm H}(G)$. Summing this estimate, we obtain
\begin{equation}\label{pi:summed-seminorm}
\sum_{r=0}^{\infty}\Lip_{\rm H}(P^rG)
\le K\tau_n\sum_{j=0}^{\infty}2^{-j}\Lip_{\rm H}(G)
=2K\tau_n\Lip_{\rm H}(G)\le CN\Lip_{\rm H}(G).
\end{equation}
For each integer $M\ge1$, the identity $G-P^MG=\sum_{r=0}^{M-1}(I-P)P^rG$, invariance of $p_{\bbeta}$, and \eqref{pi:dirichlet} yield
\begin{equation}\label{pi:telescoping-covariance}
\Cov(F(Y),G(Y))-\Cov(F(Y),(P^MG)(Y))=\frac1N\sum_{r=0}^{M-1}\sum_{i=1}^N\E\{q_i(1-q_i)\Delta_iF\,\Delta_i(P^rG)\}.
\end{equation}
By invariance, we have $\E[(P^MG)(Y)]=\E G(Y)$. This mean lies between the minimum and maximum of $P^MG$, whose difference is at most $N\Lip_{\rm H}(P^MG)$. Consequently, \eqref{pi:block-seminorm} implies, for each fixed $n$,
\[
\|P^MG-\E G(Y)\|_\infty
\le N\Lip_{\rm H}(P^MG)\longrightarrow0
\qquad(M\longrightarrow\infty).
\]
Taking absolute values in \eqref{pi:telescoping-covariance}, using \eqref{pi:summed-seminorm}, and then letting $M\to\infty$, we obtain
\[
|\Cov(F(Y),G(Y))|\le\frac1{4N}\left(\sum_{i=1}^N\|\Delta_iF\|_\infty\right)\sum_{r=0}^{\infty}\Lip_{\rm H}(P^rG)\le C\left(\sum_{i=1}^N\|\Delta_iF\|_\infty\right)\Lip_{\rm H}(G).
\]
This proves \eqref{eq:covariance-bound} for large $n$. On $\mathcal D$, applying \eqref{pi:dobrushin-contraction} to eigenfunctions and summing it over time yields a spectral gap at least $\delta/N$ and the sharper sum $\sum_{r=0}^{\infty}\Lip_{\rm H}(P^rG)\le(N/\delta)\Lip_{\rm H}(G)$. Thus both constants are $(4\delta)^{-1}$.

For each remaining sample size, we compare with the product Bernoulli$(1/2)$ measure $\nu_n$. Define
\begin{equation}\label{pi:density-bounds}
r_{n,\bbeta}(y)=p_{\bbeta}(y)/\nu_n(y),\qquad
m_n=\inf_{\bbeta,y}r_{n,\bbeta}(y),\qquad
M_n=\sup_{\bbeta,y}r_{n,\bbeta}(y).
\end{equation}
The extrema range over the fixed compact parameter family and all configurations, so $0<m_n\le M_n<\infty$. For $V\sim\nu_n$, we apply variance tensorization to obtain
\begin{equation}\label{pi:product}
\Var(F(V))\le\sum_{i=1}^N\E\Var(F(V)\mid V_{-i})
=\tfrac14\sum_{i=1}^N\E|\Delta_iF(V)|^2.
\end{equation}
Using $\E F(V)$ as a comparison constant and then \eqref{pi:density-bounds}--\eqref{pi:product}, we obtain
\begin{equation}\label{pi:density-comparison}
\Var(F(Y))\le M_n\Var(F(V))
\le\frac{M_n}{4m_n}\sum_{i=1}^N\E|\Delta_iF(Y)|^2.
\end{equation}
Cauchy--Schwarz and \eqref{pi:density-comparison} bound the covariance by
\[
\frac{M_n}{4m_n}
\left(\sum_{i=1}^N\|\Delta_iF\|_\infty^2\right)^{1/2}
\left(\sum_{i=1}^N\|\Delta_iG\|_\infty^2\right)^{1/2}
\le\frac{M_n\sqrt N}{4m_n}
\left(\sum_{i=1}^N\|\Delta_iF\|_\infty\right)\Lip_{\rm H}(G).
\]
Taking the maximum over the finitely many remaining $n$ completes both bounds.
\end{proof}

We next verify the contraction estimate used in Lemma~\ref{lem:poincare}. Under the Dobrushin condition \eqref{eq:dobrushin}, the sum of the absolute influences is below one at every configuration. We can therefore couple two chains so that their expected distance decreases at each step, as follows.

\begin{lemma}\label{lem:dobrushin-input}
If $D(\bbeta)\le1-\delta$ for some $\delta>0$, then, for every $r\ge0$, we have
\begin{equation}\label{pi:dobrushin-contraction}
\Lip_{\rm H}(P_{\bbeta,n}^rF)
\le(1-\delta/N)^r\Lip_{\rm H}(F).
\end{equation}
In particular, \eqref{pi:block-seminorm} holds uniformly on every compact subset of $\mathcal D$.
\end{lemma}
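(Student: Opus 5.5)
We will prove \eqref{pi:dobrushin-contraction} by a path-coupling argument for the Glauber dynamics, reducing to a one-step estimate for adjacent configurations. Since $\Lip_{\rm H}$ is submultiplicative under composition of operators only in the sense $\Lip_{\rm H}(P^{r}F)\le c^r\Lip_{\rm H}(F)$ whenever $\Lip_{\rm H}(PG)\le c\Lip_{\rm H}(G)$ for all $G$, it suffices to show
\[
\Lip_{\rm H}(P_{\bbeta,n}G)\le(1-\delta/N)\Lip_{\rm H}(G)
\]
for every configuration function $G$, and then iterate. Because $\Lip_{\rm H}(PG)=\max_i\|\Delta_iPG\|_\infty$, we only need to compare $PG(y)$ and $PG(z)$ for configurations $y,z$ differing in a single coordinate $i$.

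For such $y,z$, use the coupling described before Lemma~\ref{lem:poincare}: the same update edge $S$ and uniform $U$. If $S=i$, then both chains resample coordinate $i$ from the same conditional probability $q_{i}(y)=q_i(z)$, since $q_i$ does not depend on $y_i$; the two updated configurations coincide. This happens with probability $1/N$. If $S=s\ne i$, coordinate $i$ still differs, and coordinate $s$ differs after the update with probability $|q_s(y)-q_s(z)|\le\|\Delta_iq_s\|_\infty$. By the bound $\Psi'\le1/4$ together with \eqref{resp:Adefinition}--\eqref{resp:matrixbounds}, this is at most $A_{is}/4$. Hence the expected Hamming distance after one step is at most
\[
1-\frac1N+\frac1N\sum_{s\ne i}\frac{A_{is}}4.
\]
By \eqref{resp:rowsum}, $\sum_sA_{is}\le4D(\bbeta)\le4(1-\delta)$, so the expected distance is at most $1-\delta/N$. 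Then $|PG(y)-PG(z)|\le\Lip_{\rm H}(G)\,\E d_{\rm H}(X_1^y,X_1^z)\le(1-\delta/N)\Lip_{\rm H}(G)$, giving the one-step contraction and hence \eqref{pi:dobrushin-contraction}.

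For the ``in particular'' statement, on a compact subset of $\mathcal D$ the continuous function $D$ attains a maximum below one, so a common $\delta>0$ exists. Take $\tau_n=\lceil N\log 2/\delta\rceil\le C_0N$; then $(1-\delta/N)^{\tau_n}\le e^{-\delta\tau_n/N}\le1/2$, and the second bound in \eqref{pi:block-seminorm} holds with $K=1$ since all factors are at most one.

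The only delicate step is justifying the row-sum bound uniformly in $n$: \eqref{resp:rowsum} must hold for all $n$, not only large $n$. We will check this using the falling-factorial comparison \eqref{loc:fallingcomparison}, which shows the product factor is at most one, with vanishing terms when no injection exists.
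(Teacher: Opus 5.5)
Your proposal is correct and uses essentially the same path-coupling argument as the paper: the same update edge and uniform variable, the influence bound $\|\Delta_iq_s\|_\infty\le A_{is}/4$ with the row-sum bound from \eqref{resp:rowsum}--\eqref{resp:matrixbounds}, and the same $\tau_n=\lceil N\log2/\delta\rceil$ with $K=1$. The only difference is organizational. You contract $\Lip_{\rm H}$ by one step using adjacent pairs and then iterate at the operator level, whereas the paper bounds $\E d_{\rm H}(X_1^y,X_1^z)$ for arbitrary pairs via a shortest coordinate path and iterates the expected distance.
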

\begin{proof}[Proof of Lemma~\ref{lem:dobrushin-input}]
Couple the two chains with the same update edge and uniform random variable, as in Reinert and Ross \cite[Lemma 3.2 and Theorem 1.7]{RR}. By \eqref{resp:rowsum}--\eqref{resp:matrixbounds}, which also hold for disconnected energy graphs, we have
\begin{equation}\label{pi:dobrushin-influence}
\max_{1\le s\le N}\sum_{\substack{i=1\\i\ne s}}^N\|\Delta_sq_i\|_\infty
\le\tfrac14\max_{1\le s\le N}\sum_{i=1}^N A_{is}
\le D(\bbeta)\le1-\delta.
\end{equation}
Given configurations $y,z$, an update at edge $i$ replaces the old disagreement $|y_i-z_i|$ by a disagreement of probability $|q_i(y)-q_i(z)|$. Thus
\begin{equation}\label{pi:coupled-update}
\E d_{\rm H}(X_1^y,X_1^z)
=\left(1-\frac1N\right)d_{\rm H}(y,z)
+\frac1N\sum_{i=1}^N|q_i(y)-q_i(z)|.
\end{equation}
A shortest coordinate path from $y$ to $z$ and \eqref{pi:dobrushin-influence} give $\sum_{i=1}^N|q_i(y)-q_i(z)|\le(1-\delta)d_{\rm H}(y,z)$. Substituting this bound into \eqref{pi:coupled-update} and iterating yields
\[
\E d_{\rm H}(X_r^y,X_r^z)
\le(1-\delta/N)^r d_{\rm H}(y,z).
\]
For adjacent initial states, $|P^rF(y)-P^rF(z)|\le\Lip_{\rm H}(F)\E d_{\rm H}(X_r^y,X_r^z)$, proving \eqref{pi:dobrushin-contraction}. Take $\tau_n=\lceil N\log2/\delta\rceil$. Then \eqref{pi:block-seminorm} holds with $K=1$.

\end{proof}

For small signed perturbations of a ferromagnetic subcritical model, we instead establish contraction where the normalized rooted counts are close to their mean-field values. We first bring the perturbed chains into this region by placing them between two ferromagnetic comparison chains and applying the relaxation estimate of Ganguly and Nam \cite[Theorem 4.2]{GN} to those chains. After $O(N)$ updates, the comparison chains have rooted counts close to their fixed-point values with high probability. We then control updates along paths between the perturbed states to obtain the same time-block estimate. The next lemma also establishes stability of the fixed point.

\begin{lemma}\label{lem:uniform-input}
Let $\B\subset\mathcal S_+$ be an arbitrary fixed nonempty compact set. There exists $\eta>0$, depending only on $\B$ and $\cH$, such that, for every fixed added family $\mathcal J$, the models on $\B_{\mathcal J}(2\eta)$ have a unique stable fixed point, satisfy common positive bounds on $p_{\btheta}$, $1-p_{\btheta}$ and $\rho_{\btheta}$, and satisfy \eqref{pi:block-seminorm} for all sufficiently large $n$. The constants in the block estimate can be chosen uniformly on that family.
\end{lemma}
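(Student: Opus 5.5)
The plan has two parts: stability of the fixed point under a $C^1$-small perturbation of the mean-field map, and the block estimate \eqref{pi:block-seminorm}, obtained by sandwiching the signed chain between two ferromagnetic chains and then using path coupling on a good set.

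\emph{Fixed point.} Set $g(\btheta,u)=u-\varphi_{\btheta}(u)$. For $\btheta=(\bbeta,\bh)\in\B_{\mathcal J}(2\eta)$, the map $\Phi_{\btheta}$ differs from $\Phi_{\bbeta}$ by $\sum_\ell h_\ell e_{J_\ell}u^{e_{J_\ell}-1}$. Using $\Psi'\le1/4$, we get
\[
\sup_{[0,1]}|\varphi_{\btheta}-\varphi_{\bbeta}|+\sup_{[0,1]}|\varphi_{\btheta}'-\varphi_{\bbeta}'|\le C\sum_\ell e_{J_\ell}^2|h_\ell|\le 2C\eta,
\]
with $C$ depending only on $\B$ and $\cH$, not on $\mathcal J$. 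By compactness, as in Step~1 of the proof of Theorem~\ref{thm:residual}, there are $r>0$ and $m>0$ with the following properties for every $\bbeta\in\B$:
\begin{itemize}
\item $|g(\bbeta,u)|\ge m$ for $|u-p_{\bbeta}|\ge r$;
\item $\partial_ug(\bbeta,u)\ge\rho_{\min}/2$ for $|u-p_{\bbeta}|\le r$, where $[p_{\bbeta}-r,p_{\bbeta}+r]\subset(0,1)$.
\end{itemize}
Choosing $\eta$ small makes the $C^1$ error at most $\min\{m,\rho_{\min}/4\}/2$. Then the perturbed map has exactly one root, that root lies within $r$ of $p_{\bbeta}$, and $\rho_{\btheta}\ge\rho_{\min}/4$. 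This gives uniform positive bounds on $p_{\btheta}$, $1-p_{\btheta}$ and $\rho_{\btheta}$, together with $|p_{\btheta}-p_{\bbeta}|\le C\eta$.

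\emph{Sandwiching.} By \eqref{pi:field-bound} applied to the added terms, $|L_{s,\btheta}-L_{s,\bbeta}|\le 2\sum_\ell|h_\ell|e_{J_\ell}\le 4\eta$. Let $\bbeta^\pm=\bbeta\pm2\eta\,\mathbf e_1$, changing only the edge coefficient. Since $\mathcal S_+$ is open, these lie in a compact subset of $\mathcal S_+$ once $\eta$ is small, and then $q_{s,\bbeta^-}\le q_{s,\btheta}\le q_{s,\bbeta^+}$ pointwise. The ferromagnetic models have conditional probabilities increasing in the configuration. Hence the coupling with common $(S_r,U_r)$ preserves the order $X^{\mathbf0,\bbeta^-}_r\le X^{y,\btheta}_r\le X^{\mathbf1,\bbeta^+}_r$: at an update, $q_{\bbeta^-}(X^-)\le q_{\bbeta^-}(X^{\btheta})\le q_{\btheta}(X^{\btheta})$, and symmetrically on the other side.

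By the Ganguly--Nam relaxation estimate \cite[Theorem 4.2]{GN} for the two ferromagnetic chains, after $C_1N$ steps all normalized rooted counts $n^{2-v_j}|\Hom(H_j,G,s)|/(2e_j)$ are within $\varepsilon+C\eta$ of $p_{\bbeta^\pm}^{e_j-1}$, uniformly in $s$, with probability at least $1-e^{-cn}$. This stays true along a window of $C_2N$ further steps. Monotonicity of rooted counts in the configuration transfers these bounds to $X^{y,\btheta}$, because $p_{\bbeta^\pm}=p_{\bbeta}+O(\eta)$. Call the resulting set of configurations the good set $\mathcal G$.

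\emph{Contraction on $\mathcal G$ and block estimate.} On $\mathcal G$, I will expand $\Delta_sq_i$ using \eqref{loc:double-root-count}--\eqref{loc:doublecount}. Summing over $i$ via the rooted-count representation gives
\[
\sum_i|\Delta_sq_i(y)|\le\varphi'_{\bbeta}(p_{\bbeta})+C(\varepsilon+\eta)+O(n^{-1})\le1-\rho_{\min}/2 .
\]
The point is that the positive part of the influence is controlled by the mean-field derivative, and the signed added terms contribute at most $C\eta$ in absolute value. For the block estimate, take adjacent $y,z$ and run the coupled chains for $\tau_n=C_1N+C_2N$ steps. The first $C_1N$ steps are a burn-in, during which the crude row-sum bound \eqref{resp:rowsum} gives
\[
\E d_{\rm H}(X_r^y,X_r^z)\le e^{Cr/N},
\]
since each update multiplies the expected disagreement by at most $1+C/N$. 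This yields the constant $K$ in \eqref{pi:block-seminorm}. During the window, both chains lie in $\mathcal G$ except on an event of probability $e^{-cn}$. On $\mathcal G$, the path-coupling computation \eqref{pi:coupled-update} gives a factor $(1-c/N)$ per step. The exceptional event contributes at most $e^{-cn}\cdot N$ times the crude growth. Choosing $C_2$ large then gives $\Lip_{\rm H}(P^{\tau_n}G)\le\frac12\Lip_{\rm H}(G)$.

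The main obstacle is this last stage. The contraction on $\mathcal G$ must be made rigorous: the distance process must be stopped when either chain leaves $\mathcal G$, and the influence sum must be checked at configurations along the coupled trajectories rather than only at $\mathcal G$ itself. I would handle this by making $\mathcal G$ slightly larger than needed, so that $(1-\rho/2)$-contraction holds at every configuration within Hamming distance $O(1)$ of a good one, and by a union bound over the $O(N)$ window times.
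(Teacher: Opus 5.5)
Your architecture matches the paper's: the $C^1$ perturbation argument for the fixed point, the sandwich between the edge-shifted ferromagnetic chains $\bbeta^\pm=\bbeta\pm2\eta\mathbf e_1$, relaxation of those chains, contraction on a typical set, and a burn-in controlled by the growth factor $1+C/N$. However, the step you flag as the main obstacle is not closed by your proposed fix.

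To bound $\E[d_{s+1}\mid\mathcal F_s]$ for $d_s=d_{\rm H}(X_s^{y,\btheta},X_s^{z,\btheta})$, you need $\sum_i|q_{i,\btheta}(X_s^{y,\btheta})-q_{i,\btheta}(X_s^{z,\btheta})|\le(1-\alpha)d_s$. This requires the column-influence bound at every configuration on a shortest path between the two target states. The chains start adjacent, but $d_s$ is only bounded in expectation and is not bounded by a constant on individual realizations. So an $O(1)$ Hamming neighbourhood of $\mathcal G$ need not contain the path, and a union bound over window times does not address this. A neighbourhood of radius $c\delta n$ would still be typical, but you would then need tail control of $d_s$, which you do not have.

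The missing observation is already in your sandwich. Rooted counts are increasing in the configuration, so the typical set is order-convex: if $b^-\le b^+$ are both typical, then so is every $w$ with $b^-\le w\le b^+$. Each intermediate state of a shortest path between $X_s^{y,\btheta}$ and $X_s^{z,\btheta}$ takes every coordinate from one of the two endpoints. It therefore lies between the envelopes $X_s^{\mathbf0,\bbeta^-}$ and $X_s^{\mathbf1,\bbeta^+}$. Hence, on the $\mathcal F_s$-measurable event $A_s$ that both envelopes are typical, the whole path is typical regardless of $d_s$. This gives $\E[d_{s+1}\mid\mathcal F_s]\le(1-\alpha/N)d_s+\frac{C+\alpha}{N}d_s\ind_{A_s^c}$. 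Together with $d_s\le N$ and $\Pp(A_s^c)\le Ce^{-cn}$, this closes the recursion with no stopping time.

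Two further points need repair.

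First, your good set constrains only the rooted counts of the energy graphs $H_j$. These control the fields $L_i$, but the influence sum is governed by $\sum_{i\ne s}\Delta_{is}|\Hom(H_j,G_y)|=\sum_{b\in\mathcal E(H_j)}\Delta_s|\Hom(H_j\setminus b,G_y)|$. That is, it depends on rooted counts of the edge-deleted graphs, which your set does not constrain. The paper defines the typical set through all graphs in $\mathcal G_a$ (at most $\max_j v_j$ vertices, at least two edges), and the relaxation estimate covers all of them.

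Second, the Ganguly--Nam relaxation estimate is stated for a fixed parameter. Its time constant, exponential rate and sample-size threshold may therefore depend on $\bbeta^\pm$, while the lemma asserts uniform constants. You need an argument such as the paper's: cover the compact set of edge-shifted parameters by finitely many boxes whose corners have nearby fixed points. Then use the monotone coupling of each chain with the two corners of its box.
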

\begin{proof}[Proof of Lemma~\ref{lem:uniform-input}]
Throughout the proof, $\bbeta\in\B$ denotes the original ferromagnetic parameter and $\btheta=(\bbeta,\bh)$ the perturbed parameter. We begin by controlling the effect of the added coefficients on the fields and the scalar fixed point. We then identify a region in which the perturbed updates contract. To reach that region, we use the relaxation of two ferromagnetic comparison chains that bound both target chains. Finally, we combine this relaxation with a bound on the distance growth before it occurs to obtain \eqref{pi:block-seminorm}.

\proofstep{1} We first bound the effect of the perturbation. The same bounds will be used to preserve the fixed point and to compare the update probabilities later in the proof. Applying the counting estimates \eqref{loc:rootcountbound} and \eqref{resp:rowsum} to each added graph $J_\ell$, we obtain
\begin{equation}\label{pi:perturbation-size}
\begin{aligned}
\max_{1\le i\le N}\|\Delta_i(T_{\btheta}-T_{\bbeta})\|_\infty
&\le2\sum_{\ell=1}^m e_{J_\ell}|h_\ell|,\\
\max_{1\le s\le N}\sum_{\substack{i=1\\i\ne s}}^N
\|\Delta_{is}(T_{\btheta}-T_{\bbeta})\|_\infty
&\le2\sum_{\ell=1}^m e_{J_\ell}(e_{J_\ell}-1)|h_\ell|.
\end{aligned}
\end{equation}
By \eqref{eq:signed-family}, the sum of these two upper bounds is $2\sum_{\ell=1}^m e_{J_\ell}^2|h_\ell|\le4\eta$ for $\btheta\in\B_{\mathcal J}(2\eta)$. Moreover, by \eqref{eq:meanfield}, we have $\|\Phi_{\btheta}-\Phi_{\bbeta}\|_\infty+\|\Phi_{\btheta}'-\Phi_{\bbeta}'\|_\infty\le\sum_{\ell=1}^m e_{J_\ell}^2|h_\ell|$ on $[0,1]$.

We use these bounds first for the fixed-point equation. Let $g_{\bbeta}(u)=u-\varphi_{\bbeta}(u)$. Near its unique stable root the derivative is positive, while away from the root the function is bounded away from zero. By compactness, we can choose common $r,c,c_*>0$ such that
\begin{equation}\label{md:uniform-stability}
\inf_{\substack{\bbeta\in\B\\|u-p_{\bbeta}|\le r}}g_{\bbeta}'(u)\ge c,
\qquad
\inf_{\substack{\bbeta\in\B,\ u\in[0,1]\\|u-p_{\bbeta}|\ge r}}
|g_{\bbeta}(u)|\ge c_*,
\end{equation}
with $[p_{\bbeta}-r,p_{\bbeta}+r]\subset(0,1)$ and opposite signs at its endpoints. For continuously differentiable $f$ on $[0,1]$, write $\|f\|_{C^1[0,1]}=\sup_{[0,1]}|f|+\sup_{[0,1]}|f'|$. Since $\Psi$ has bounded first two derivatives, \eqref{eq:meanfield} yields
\begin{equation}\label{pi:scalar-perturbation}
\|\varphi_{\btheta}-\varphi_{\bbeta}\|_{C^1[0,1]}
\le C_{\B,\cH}\bigl(\|\Phi_{\btheta}-\Phi_{\bbeta}\|_\infty
+\|\Phi_{\btheta}'-\Phi_{\bbeta}'\|_\infty\bigr)
\le C_{\B,\cH}\sum_{\ell=1}^m e_{J_\ell}^2|h_\ell|.
\end{equation}
Let $g_{\btheta}(u)=u-\varphi_{\btheta}(u)$. Choose $\eta$ so that the upper bound in \eqref{pi:scalar-perturbation} is at most $\min\{c,c_*\}/2$. Combining \eqref{md:uniform-stability} and \eqref{pi:scalar-perturbation}, we obtain
\[
g_{\btheta}'(u)\ge c/2\ \text{if }|u-p_{\bbeta}|\le r,\qquad
|g_{\btheta}(u)|\ge c_*/2\ \text{if }u\in[0,1],\ |u-p_{\bbeta}|\ge r.
\]
These bounds preserve the opposite endpoint signs and exclude roots outside the interval. Inside it, strict monotonicity and the intermediate value theorem give exactly one zero $p_{\btheta}$. By the mean value theorem on this interval, we have
\begin{equation}\label{pi:root-displacement}
|p_{\btheta}-p_{\bbeta}|
\le\frac2c|g_{\btheta}(p_{\bbeta})|
\le\frac2c\|\varphi_{\btheta}-\varphi_{\bbeta}\|_\infty
\le C_{\B,\cH}\sum_{\ell=1}^m e_{J_\ell}^2|h_\ell|.
\end{equation}
By \eqref{pi:root-displacement}, decreasing $\eta$ if necessary preserves common positive lower bounds for $p_{\btheta}$ and $1-p_{\btheta}$. The derivative bound also gives $\rho_{\btheta}=g_{\btheta}'(p_{\btheta})\ge c/2$. For the comparison chains in Step~3, we will only shift the original edge coefficient. We choose $\eta$ small enough that the resulting set $\B_{\rm e}=\{\bbeta+s\mathbf e_1:\bbeta\in\B,\ |s|\le2\eta\}$ is a compact subset of the original $\mathcal S_+$.

If $k=1$, combine the linear terms to obtain $D(\btheta)\le\tfrac12\sum_{\ell=1}^m e_{J_\ell}^2|h_\ell|\le\eta$. We can then choose $\eta<1/2$ and apply Lemma~\ref{lem:dobrushin-input}. For the rest of the proof, assume $k\ge2$.

\proofstep{2} With the fixed point controlled, we turn to the update probabilities. We define the typical region using the original fixed point $p_{\bbeta}$ and rooted counts, since \eqref{pi:perturbation-size} controls the added interactions separately. Recall that $|\Hom(G,G_y,i)|$ in \eqref{loc:rootcount} omits the factor of the edge mapped to $i$. Let $a=\max_{1\le j\le k}v_j$ and let $\mathcal G_a$ be the finite collection, up to isomorphism, of simple graphs with at most $a$ vertices and at least two edges, allowing isolated vertices. Define
\begin{equation}\label{pi:typical-set}
\begin{aligned}
r_G(y,i)&=\left\{\frac{|\Hom(G,G_y,i)|}{2e_Gn^{v_G-2}}\right\}^{1/(e_G-1)},
\\
\mathcal T_{\bbeta}(\delta)&=\{y:|r_G(y,i)-p_{\bbeta}|<\delta\ \forall G\in\mathcal G_a,\ i\in\cI\}.
\end{aligned}
\end{equation}
The collection $\mathcal G_a$ contains every original energy graph and every graph obtained by deleting one of its edges, provided at least two edges remain. It does not depend on the added family $\mathcal J$, whose contribution is controlled by \eqref{pi:perturbation-size}. Graphs with one edge have deterministic rooted counts $2\prod_{r=2}^{v_G-1}(n-r)$ for $n\ge v_G$.

To evaluate the total change in the local fields, fix a graph $K$ and sum over the second marked host edge in \eqref{loc:double-root-count}. Choosing its source edge first gives the identity
\begin{equation}\label{pi:edge-deletion-root}
\sum_{\substack{i=1\\i\ne s}}^N\Delta_{is}|\Hom(K,G_y)|
=\sum_{b\in\mathcal E(K)}\Delta_s|\Hom(K\setminus b,G_y)|,
\end{equation}
where $K\setminus b$ retains the vertex set of $K$. An edgeless deletion contributes zero. On $\mathcal T_{\bbeta}(\delta)$, we use \eqref{pi:typical-set} for graphs with at least two edges and \eqref{loc:fallingcomparison} for graphs with one edge. Combining these estimates with \eqref{loc:rootcount} and \eqref{pi:edge-deletion-root}, we obtain
\begin{equation}\label{pi:uniform-root}
L_{i,\bbeta}(y)=2\Phi_{\bbeta}(p_{\bbeta})+O(\delta+n^{-1}),\quad
\sum_{\substack{i=1\\i\ne s}}^N\Delta_{is}T_{\bbeta}(y)
=2\Phi_{\bbeta}'(p_{\bbeta})+O(\delta+n^{-1}).
\end{equation}
To estimate $\Delta_s q_{i,\btheta}(y)$, we need the fields at $y^{(s,0)}$ and $y^{(s,1)}$, as defined in \eqref{eq:force}. These configurations need not remain in $\mathcal T_{\bbeta}(\delta)$. Instead, for $i\ne s$, we compare each field with $L_{i,\bbeta}(y)$. Combining \eqref{loc:doublecount}, \eqref{pi:perturbation-size} and \eqref{pi:uniform-root}, we have
\begin{equation}\label{pi:local-field-endpoints}
\begin{aligned}
\max_{b\in\{0,1\}}|L_{i,\btheta}(y^{(s,b)})-2\Phi_{\bbeta}(p_{\bbeta})|
&\le |L_{i,\bbeta}(y)-2\Phi_{\bbeta}(p_{\bbeta})|
+\|\Delta_sL_{i,\bbeta}\|_\infty
+\|L_{i,\btheta}-L_{i,\bbeta}\|_\infty\\
&\le C_{\B,\cH}(\delta+n^{-1})+4\eta.
\end{aligned}
\end{equation}
In \eqref{pi:local-field-endpoints}, the original energy is controlled by \eqref{loc:doublecount} and the added fields by \eqref{pi:perturbation-size}. Since $\Psi'(2\Phi_{\bbeta}(p_{\bbeta}))=p_{\bbeta}(1-p_{\bbeta})$, the mean value theorem applied to \eqref{eq:conditional} yields
\begin{equation}\label{pi:absolute-local-influence}
\begin{aligned}
|\Delta_sq_{i,\btheta}(y)|&\le\{p_{\bbeta}(1-p_{\bbeta})+C(\delta+\eta+n^{-1})\}\\
&\quad\times\left\{\Delta_{is}T_{\bbeta}(y)+\sum_{\ell=1}^m|h_\ell|n^{2-v_{J_\ell}}\Delta_{is}|\Hom(J_\ell,G_y)|\right\}.
\end{aligned}
\end{equation}
The second factor in \eqref{pi:absolute-local-influence} is a nonnegative, symmetric upper bound for $|\Delta_{is}T_{\btheta}(y)|$. Summing over $i\ne s$ and combining \eqref{pi:uniform-root}, \eqref{pi:perturbation-size} and \eqref{eq:meanfield}, we obtain
\begin{equation}\label{pi:column-contraction}
\max_{1\le s\le N}\sum_{\substack{i=1\\i\ne s}}^N|\Delta_sq_{i,\btheta}(y)|
\le\varphi_{\bbeta}'(p_{\bbeta})+C(\delta+\eta+n^{-1})\le1-\alpha,
\qquad y\in\mathcal T_{\bbeta}(\delta),
\end{equation}
where $\alpha=\tfrac12\inf_{\bbeta\in\B}\rho_{\bbeta}>0$ and the bound holds after choosing $\delta$ sufficiently small, then decreasing $\eta$ and finally taking $n$ sufficiently large, with all choices depending only on $\B$ and $\cH$. The column sum in \eqref{pi:column-contraction} is evaluated at the same configuration $y$ throughout.

\proofstep{3} The local estimate \eqref{pi:column-contraction} can be used only after the target states reach $\mathcal T_{\bbeta}(\delta)$. We now arrange this by comparison with ferromagnetic chains. Recall that $\B_{\rm e}$, introduced in Step~1, consists of the original parameters with edge coefficient shifted by at most $2\eta$. The first task is to make the relaxation estimate uniform over this family. We will prove
\begin{equation}\label{pi:uniform-ferromagnetic-relaxation}
\sup_{\bzeta\in\B_{\rm e}}
\sup_{y\in\{0,1\}^{\cI}}
\sup_{\substack{s\in\mathbb Z_{\ge0}\\s\ge\lceil C_0N\rceil}}
\Pp\{X_s^{y,\bzeta}\notin\mathcal T_{\bzeta}(\delta/2)\}
\le Ce^{-cn}.
\end{equation}
For each fixed ferromagnetic parameter, Ganguly and Nam \cite[Theorem 4.2]{GN} state the estimate of Bhamidi, Bresler and Sly \cite[Lemma 16]{BBS} in the normalization \eqref{pi:typical-set}. Their assumptions allow arbitrary real edge coefficients, as specified in Bhamidi, Bresler and Sly \cite[Definition 1]{BBS}, and their time scale agrees with $N\asymp n^2$ in \eqref{eq:edges}. To obtain common constants on $\B_{\rm e}$, we cover it by finitely many small parameter boxes and compare the chains in each box with the chains at its two corners.

For vectors $\bzeta^-<\bzeta^+$, let
\[
[\bzeta^-,\bzeta^+]=\prod_{j=1}^k[\zeta^-_j,\zeta^+_j],
\]
where vector inequalities are coordinatewise. For the original ferromagnetic family, the implicit function theorem and uniqueness of the root imply that $\mathcal S_+$ is open and $\bzeta\mapsto p_{\bzeta}$ is continuous. Compactness of $\B_{\rm e}$ therefore provides finitely many pairs $\bzeta_r^-<\bzeta_r^+$, $1\le r\le M$, such that
\begin{equation}\label{pi:finite-parameter-boxes}
\begin{gathered}
\B_{\rm e}\subseteq\bigcup_{r=1}^M\operatorname{int}[\bzeta_r^-,\bzeta_r^+],
\qquad [\bzeta_r^-,\bzeta_r^+]\subset\mathcal S_+,\\
\sup_{\bzeta\in[\bzeta_r^-,\bzeta_r^+]}
\max\{|p_{\bzeta_r^-}-p_{\bzeta}|,|p_{\bzeta_r^+}-p_{\bzeta}|\}<\delta/8,
\qquad 1\le r\le M.
\end{gathered}
\end{equation}
Fix a box in \eqref{pi:finite-parameter-boxes} and denote its corners by $\bzeta^-,\bzeta^+$. The ferromagnetic update probabilities are increasing in both the parameter and the configuration. Using the same update edge and uniform random variable for the three chains therefore preserves the order
\begin{equation}\label{pi:parameter-box-coupling}
\bzeta^-\le\bzeta\le\bzeta^+
\quad\Longrightarrow\quad
X_s^{\mathbf0,\bzeta^-}\le X_s^{y,\bzeta}\le X_s^{\mathbf1,\bzeta^+},
\qquad s\ge0.
\end{equation}
Apply Ganguly and Nam \cite[Theorem 4.2]{GN} to the two corners of the $r$th box, with tolerance $\delta/8$. We obtain constants $C_{0,r},C_r,c_r>0$ and $n_{0,r}$ such that, for $n\ge n_{0,r}$ and $s\ge\lceil C_{0,r}N\rceil$,
\begin{equation}\label{pi:corner-relaxation}
\sup_{y\in\{0,1\}^{\cI}}
\max_{\bxi\in\{\bzeta_r^-,\bzeta_r^+\}}
\Pp\left\{
\max_{\substack{G\in\mathcal G_a\text{ without isolated vertices}\\i\in\cI}}
|r_G(X_s^{y,\bxi},i)-p_{\bxi}|\ge\delta/8
\right\}\le C_re^{-c_rn}.
\end{equation}
Outside the two exceptional events in \eqref{pi:corner-relaxation}, both corner chains have rooted counts within $\delta/8$ of their own fixed points. The centers differ from $p_{\bzeta}$ by less than $\delta/8$ by \eqref{pi:finite-parameter-boxes}. Since each $r_G$ is increasing in the configuration, combining these bounds with \eqref{pi:parameter-box-coupling}, we have
\[
p_{\bzeta}-\delta/4
<r_G(X_s^{\mathbf0,\bzeta^-},i)
\le r_G(X_s^{y,\bzeta},i)
\le r_G(X_s^{\mathbf1,\bzeta^+},i)
<p_{\bzeta}+\delta/4
\]
for all $i\in\cI$ and all $G\in\mathcal G_a$ without isolated vertices. To cover the remaining graphs in the definition \eqref{pi:typical-set}, delete the isolated vertices of $G$ and call the resulting graph $G_0$. Taking a single-edge difference in \eqref{stein:isolates} and using \eqref{loc:rootcount}, we obtain
\[
r_G(y,i)=b_n^{1/(e_G-1)}r_{G_0}(y,i)
=r_{G_0}(y,i)+O_G(n^{-1}),\qquad e_G\ge2.
\]
Here $b_n=1+O_G(n^{-1})$ is the deterministic factor in \eqref{stein:isolates} for $G$ and $G_0$. By \eqref{loc:rootcountbound}, the error is uniform in $y,i$. Since $\mathcal G_a$ is finite, this error is smaller than $\delta/4$ for large $n$, leaving total error below $\delta/2$ as required in \eqref{pi:uniform-ferromagnetic-relaxation}.

Only finitely many corner estimates are involved. We therefore take
\[
C_0=\max_{1\le r\le M}C_{0,r},\qquad
c=\min_{1\le r\le M}c_r,\qquad
C=2\max_{1\le r\le M}C_r,
\]
and a common sample-size threshold above $\max_{1\le r\le M}n_{0,r}$, also large enough for the isolated-vertex error. Combining \eqref{pi:corner-relaxation} with \eqref{pi:parameter-box-coupling} and the isolated-vertex bound proves \eqref{pi:uniform-ferromagnetic-relaxation}. These constants remain valid if $\eta$ is decreased.

The corner comparisons establish uniform relaxation on $\B_{\rm e}$. We now bound the dynamics at $\btheta=(\bbeta,\bh)$ by the two ferromagnetic models with $\bbeta^\pm=\bbeta\pm2\eta\mathbf e_1$ in this family. Since $|\Hom(K_2,G_y)|=2E(y)$, these shifts change every local field by $\pm4\eta$, which bounds the added field by \eqref{pi:perturbation-size}. Consequently,
\begin{equation}\label{pi:edge-field-sandwich}
L_{i,\bbeta^-}(y)\le L_{i,\btheta}(y)\le L_{i,\bbeta^+}(y).
\end{equation}
For configurations $b^-\le y\le b^+$, combine \eqref{pi:edge-field-sandwich} with \eqref{eq:conditional} and monotonicity of the two ferromagnetic comparison probabilities. We obtain
\begin{equation}\label{pi:ordered-update-sandwich}
q_{i,\bbeta^-}(b^-)
\le q_{i,\bbeta^-}(y)
\le q_{i,\btheta}(y)
\le q_{i,\bbeta^+}(y)
\le q_{i,\bbeta^+}(b^+).
\end{equation}
The order in \eqref{pi:ordered-update-sandwich} uses monotonicity only for the comparison probabilities, not for the perturbed model. We can therefore use the same update edge and uniform random variable for two target chains, started from any $y,z$, and the two comparison chains. At each update, the displayed inequalities preserve the order, so
\begin{equation}\label{pi:common-envelopes}
X_s^{\mathbf0,\bbeta^-}\le X_s^{u,\btheta}\le X_s^{\mathbf1,\bbeta^+},
\qquad u\in\{y,z\},\ s\ge0.
\end{equation}
The bound \eqref{pi:uniform-ferromagnetic-relaxation} is centered at $p_{\bbeta^\pm}$, whereas contraction in \eqref{pi:column-contraction} uses $p_{\bbeta}$. By \eqref{pi:root-displacement}, applied to the edge shifts $\pm2\eta$, we can decrease $\eta$ so that $|p_{\bbeta^\pm}-p_{\bbeta}|<\delta/4$. For $w\in\mathcal T_{\bbeta^\pm}(\delta/2)$, the definition \eqref{pi:typical-set} then gives
\[
|r_G(w,i)-p_{\bbeta}|
\le |r_G(w,i)-p_{\bbeta^\pm}|+|p_{\bbeta^\pm}-p_{\bbeta}|
<\delta/2+\delta/4<\delta
\]
for every $G\in\mathcal G_a$ and $i\in\cI$. Hence $\mathcal T_{\bbeta^\pm}(\delta/2)\subset\mathcal T_{\bbeta}(\delta)$. Applying \eqref{pi:uniform-ferromagnetic-relaxation} to the two comparison chains in \eqref{pi:common-envelopes}, we obtain
\begin{equation}\label{pi:uniform-relaxation}
A_s=\{X_s^{\mathbf0,\bbeta^-},X_s^{\mathbf1,\bbeta^+}
\in\mathcal T_{\bbeta}(\delta)\},\quad
\Pp(A_s^c)\le Ce^{-cn},\quad s\ge s_0:=\lceil C_0N\rceil.
\end{equation}
\proofstep{4} We can now convert the relaxation estimate into contraction of the target chains. Recall from \eqref{pi:uniform-relaxation} that $A_s$ is the event on which both comparison states lie in $\mathcal T_{\bbeta}(\delta)$. By \eqref{pi:common-envelopes}, both target states lie between them. To use \eqref{pi:column-contraction}, we must also keep a path joining the targets inside the same set. This follows from the monotonicity of the rooted counts, which implies
\begin{equation}\label{pi:typical-order-interval}
b^-\le b^+,\quad b^-,b^+\in\mathcal T_{\bbeta}(\delta)
\quad\Longrightarrow\quad
[b^-,b^+]:=\{w:b^-\le w\le b^+\}\subseteq\mathcal T_{\bbeta}(\delta).
\end{equation}
Indeed, $r_G(b^-,i)\le r_G(w,i)\le r_G(b^+,i)$ for every $w$ between the two endpoints. On $A_s$, choose a shortest coordinate path $w^0,\ldots,w^d$ between the target states. Each coordinate along the path takes its value from one of the two target states. Thus \eqref{pi:common-envelopes} and \eqref{pi:typical-order-interval} keep the whole path in $\mathcal T_{\bbeta}(\delta)$, even if its endpoints are not ordered. If $s_\ell$ is the coordinate changed at step $\ell$, combining \eqref{eq:force}, \eqref{eq:conditional} and \eqref{pi:column-contraction}, we obtain
\begin{equation}\label{pi:path-contraction}
\sum_{i=1}^N|q_{i,\btheta}(w^d)-q_{i,\btheta}(w^0)|
\le\sum_{\ell=1}^d\sum_{\substack{i=1\\i\ne s_\ell}}^N
|\Delta_{s_\ell}q_{i,\btheta}(w^{\ell-1})|
\le(1-\alpha)d.
\end{equation}
Here $d=d_{\rm H}(w^0,w^d)$ and $\Delta_{s_\ell}q_{s_\ell,\btheta}=0$. At each step of \eqref{pi:path-contraction}, the whole column is evaluated at the same configuration $w^{\ell-1}$, as required by \eqref{pi:column-contraction}.

The path estimate handles the updates on $A_s$. For the time before relaxation, and for the exceptional event $A_s^c$, we need a bound valid on every configuration. Combining \eqref{resp:rowsum}, \eqref{resp:matrixbounds} and \eqref{pi:perturbation-size} with the one-update identity \eqref{pi:coupled-update}, we obtain
\begin{equation}\label{pi:global-coupling-growth}
\E d_{\rm H}(X_1^{y,\btheta},X_1^{z,\btheta})
\le(1+C/N)d_{\rm H}(y,z).
\end{equation}
Now take adjacent initial states and let $d_s=d_{\rm H}(X_s^{y,\btheta},X_s^{z,\btheta})$ and $D_s=\E d_s$. Let $\mathcal F_s$ be the history of the four coupled chains through time $s$. The event $A_s$ is $\mathcal F_s$-measurable. Combining \eqref{pi:coupled-update} and \eqref{pi:path-contraction} on $A_s$, and using \eqref{pi:global-coupling-growth} on its complement, we obtain
\begin{equation}\label{pi:conditional-distance-recursion}
\E[d_{s+1}\mid\mathcal F_s]
\le(1-\alpha/N)d_s+\frac{C+\alpha}{N}d_s\ind_{A_s^c}.
\end{equation}
The fixed-time bound in \eqref{pi:uniform-relaxation} suffices, because \eqref{pi:conditional-distance-recursion} retains the exceptional event at each step. Recall that $s_0=\lceil C_0N\rceil$ and $\Pp(A_s^c)\le Ce^{-cn}$ for $s\ge s_0$. Since $d_s\le N$, the last term in \eqref{pi:conditional-distance-recursion} has expectation at most $Ce^{-cn}$. Before time $s_0$, iterating \eqref{pi:global-coupling-growth} from $D_0=1$ gives $D_{s_0}\le(1+C/N)^{s_0}\le e^{C(C_0+1)}=:K_0$. Taking expectations and iterating the resulting recursion, we obtain
\begin{equation}\label{pi:adjacent-block}
D_{s+1}\le(1-\alpha/N)D_s+Ce^{-cn},\qquad s\ge s_0,\quad
D_{s_0+r}\le K_0e^{-\alpha r/N}+CNe^{-cn},\qquad r\ge0.
\end{equation}
The accumulated error in \eqref{pi:adjacent-block} is at most $CNe^{-cn}$. Since $\alpha=\frac12\inf_{\bbeta\in\B}\rho_{\bbeta}>0$ by \eqref{pi:column-contraction}, choose $A$ with $K_0e^{-\alpha A}<1/4$ and put $\tau_n=s_0+\lceil AN\rceil\le C_*N$. For large $n$, \eqref{pi:adjacent-block} gives $D_{\tau_n}\le1/2$, so for adjacent initial states,
\[
\sup_{d_{\rm H}(y,z)=1}
|P_{\btheta,n}^{\tau_n}F(y)-P_{\btheta,n}^{\tau_n}F(z)|
\le\Lip_{\rm H}(F)\sup_{d_{\rm H}(y,z)=1}D_{\tau_n}
\le\tfrac12\Lip_{\rm H}(F).
\]
This proves the first inequality in \eqref{pi:block-seminorm}. For the second, we need an upper bound at every earlier time, for which \eqref{pi:global-coupling-growth} suffices. Iterating that estimate, we have
\[
\max_{0\le r<\tau_n}\Lip_{\rm H}(P_{\btheta,n}^rF)
\le(1+C/N)^{\tau_n}\Lip_{\rm H}(F)
\le e^{CC_*}\Lip_{\rm H}(F),
\]
which proves the second inequality in \eqref{pi:block-seminorm}. All restrictions on $\eta$ in Steps~1--3 depend only on $\B$ and $\cH$, because the added interactions are bounded by \eqref{pi:perturbation-size} and \eqref{pi:scalar-perturbation}. Thus the same radius applies to every fixed added family $\mathcal J$. Together with the fixed-point bounds from Step~1, this completes the proof.
\end{proof}

\subsection{Moment estimates}\label{sec:local-responses}

The two contraction arguments complete the proof of the Poincar\'e inequality \eqref{eq:poincare} and the covariance bound \eqref{eq:covariance-bound}. We now use these inequalities to estimate the local fields and the edge-count remainder. In this subsection, $\bbeta$ again denotes the actual model parameter, including the added coefficients in the signed case. As a first application, take $F=E$ in \eqref{eq:poincare}. By \eqref{eq:edge-scale} and \eqref{eq:compact-scales}, we obtain
\begin{equation}\label{loc:edgevariance}
\Var(E_{\bbeta})\le\kappa N,\qquad \E W_{\bbeta}^2\le C.
\end{equation}
To evaluate a local field through its rooted-count representation, we will need expectations of products of finitely many edge indicators. The covariance bound allows us to separate one factor at a time. This yields the following approximation in terms of the true edge mean.

\begin{lemma}\label{loc:product}
For every fixed integer $m\ge1$ and distinct host edges $s_1,\ldots,s_m$, we have
\begin{equation}\label{loc:producteq}
\left|\E\prod_{a=1}^mY_{s_a}-\widetilde p^{\,m}\right|
\le C_m/n.
\end{equation}
The constant is uniform over the fixed parameter family.
\end{lemma}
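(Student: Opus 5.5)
We argue by induction on $m$, peeling off one indicator at a time with the covariance bound \eqref{eq:covariance-bound}. The case $m=1$ is exact, since $\E Y_{s_1}=\widetilde p$ by vertex-permutation invariance. For $m\ge2$ we write
\[
\E\prod_{a=1}^mY_{s_a}
=\E Y_{s_m}\,\E\prod_{a=1}^{m-1}Y_{s_a}
+\Cov\Bigl(Y_{s_m},\prod_{a=1}^{m-1}Y_{s_a}\Bigr)
=\widetilde p\,\E\prod_{a=1}^{m-1}Y_{s_a}+\Cov\Bigl(Y_{s_m},\prod_{a=1}^{m-1}Y_{s_a}\Bigr).
\]
The induction hypothesis and $0\le\widetilde p\le1$ then give $|\widetilde p\,\E\prod_{a<m}Y_{s_a}-\widetilde p^{\,m}|\le C_{m-1}/n$. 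It therefore suffices to show that the covariance is $O(1/n)$ uniformly over the family.

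We naively apply \eqref{eq:covariance-bound} with $F=\prod_{a<m}Y_{s_a}$ and $G=Y_{s_m}$. Then $\Delta_iF$ vanishes unless $i\in\{s_1,\ldots,s_{m-1}\}$ and is bounded by one there, so $\sum_i\|\Delta_iF\|_\infty\le m-1$. Also $\max_j\|\Delta_jG\|_\infty=1$. This yields only an $O(1)$ bound, so the Hamming-Lipschitz structure must be exploited more carefully, and this is the main obstacle. The remedy is to replace $Y_{s_m}$ by something with small increments. We use the conditional identity \eqref{stein:bernoulli}: with $A=F$ and $i=s_m$, and since $F$ does not depend on $y_{s_m}$,
\[
\E[(Y_{s_m}-q_{s_m}(Y))F(Y)]=\E[q_{s_m}(1-q_{s_m})\Delta_{s_m}F]=0.
\]
Taking $F=1$ also gives $\E Y_{s_m}=\E q_{s_m}(Y)$, so the covariance equals $\Cov(q_{s_m}(Y),F(Y))$. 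By \eqref{resp:matrixbounds} and \eqref{loc:doublecount}, $\max_j\|\Delta_jq_{s_m}\|_\infty\le\max_jA_{s_mj}/4\le C/n$. Applying \eqref{eq:covariance-bound} with $F$ as above and $G=q_{s_m}$ gives
\[
\Bigl|\Cov\Bigl(Y_{s_m},\prod_{a<m}Y_{s_a}\Bigr)\Bigr|\le\kappa(m-1)\,C/n,
\]
which closes the induction with $C_m=C_{m-1}+\kappa C(m-1)$.

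Uniformity follows because $\kappa$ from Lemma~\ref{lem:poincare} and the constant in \eqref{resp:matrixbounds} depend only on the fixed family, through the bounded coefficients and \eqref{resp:rowsum}. The bound \eqref{eq:covariance-bound} holds for all $n\ge2$ by the finite-$n$ comparison in that lemma, so no separate treatment of small $n$ is needed.
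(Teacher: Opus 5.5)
Your proposal is correct and follows essentially the same route as the paper. You replace the peeled indicator $Y_s$ by $q_s(Y)$ by conditioning on $Y_{-s}$, then apply \eqref{eq:covariance-bound} with the remaining product as $F$ (total influence at most $m-1$) and $q_s$ as $G$ (increments $O(1/n)$ by \eqref{resp:matrixbounds}), and iterate; the paper writes this iteration as the explicit telescoping sum \eqref{loc:producttelescope}, peeling the first factor instead of inducting on the last, which is only a cosmetic difference.
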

\begin{proof}[Proof of Lemma~\ref{loc:product}]
For $A\subset\cI\setminus\{s\}$, put $G_A(y)=\prod_{r\in A}y_r$. To use the small coordinate increments of $q_s$, we first condition on $Y_{-s}$. Since $G_A$ does not depend on $y_s$, \eqref{eq:conditional} implies $\Cov(Y_s,G_A(Y))=\Cov(q_s(Y),G_A(Y))$. Here $\sum_{r=1}^N \|\Delta_rG_A\|_\infty\le|A|$, while $\max_r\|\Delta_rq_s\|_\infty\le C/n$ by \eqref{resp:matrixbounds}. Using \eqref{eq:covariance-bound} with $F=G_A$ and $G=q_s$, we obtain
\begin{equation}\label{loc:productcov}
\left|\Cov\left(Y_s,\prod_{r\in A}Y_r\right)\right|
\le C|A|/n.
\end{equation}
Expanding the covariance of the first factor and iterating from $s_1$ to $s_m$ gives
\begin{equation}\label{loc:producttelescope}
\E\prod_{a=1}^mY_{s_a}-\widetilde p^{\,m}
=\sum_{a=1}^{m-1}\widetilde p^{\,a-1}
\Cov\left(Y_{s_a},\prod_{b=a+1}^mY_{s_b}\right).
\end{equation}
Applying \eqref{loc:productcov} term by term in \eqref{loc:producttelescope}, with $0\le\widetilde p\le1$, proves \eqref{loc:producteq}. For $m=1$ the difference is zero. The finitely many sample sizes excluded by the counting bounds are covered by increasing $C_m$, since both product expectations lie in $[0,1]$.
\end{proof}

We next use \eqref{loc:producteq} to evaluate the mean of a local field. Together with a variance estimate, this will show that the true edge mean $\widetilde p$ nearly satisfies the fixed-point equation and is close to $p$. By \eqref{eq:model}, \eqref{eq:conditional} and \eqref{loc:rootcount}, the local field has the representation
\begin{equation}\label{loc:rootfield}
L_s(y)=\sum_{j=1}^k\beta_jn^{2-v_j}|\Hom(H_j,G_y,s)|.
\end{equation}
The uniform bounds on local fields and conditional probabilities are \eqref{pi:field-bound}--\eqref{pi:conditional-bounds}. Taking expectations in \eqref{eq:conditional} also gives
\begin{equation}\label{loc:bernoulli}
\E q_s(Y)=\widetilde p,\qquad
c\le\widetilde p\le1-c.
\end{equation}
The following estimates relate $\widetilde p$ and the local fields to their mean-field values.

\begin{lemma}\label{loc:field}
Let $\ell_0=2\Phi_{\bbeta}(p)$. Then, uniformly over $\bbeta\in\B$ and host edges $s$, we have
\begin{align}
|\widetilde p-p|&\le C/n,\label{loc:meanprecision}\\
|\E L_s(Y)-\ell_0|+\E\{L_s(Y)-\ell_0\}^2
&\le C/n.\label{loc:fieldprecision}
\end{align}
\end{lemma}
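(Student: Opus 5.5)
We will first bound the fluctuation part of the local field, then identify its mean, and finally deduce the precision of $\widetilde p$ from an approximate fixed-point equation. By \eqref{loc:rootfield} and \eqref{loc:rootcount}, $L_s$ is a fixed linear combination of rooted counts. Changing one edge $r\ne s$ changes $L_s$ by at most $\|\Delta_rL_s\|_\infty$, which \eqref{loc:doublecount} bounds by $C/n$ when $|r\cap s|=1$ and by $C/n^2$ otherwise. There are $O(n)$ edges of the first kind and $O(n^2)$ of the second. The Poincar\'e inequality \eqref{eq:poincare} therefore gives
\[
\Var(L_s(Y))\le\kappa\sum_r\|\Delta_rL_s\|_\infty^2\le C\bigl(n\cdot n^{-2}+n^2\cdot n^{-4}\bigr)\le C/n .
\]

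Next, we compute the mean of $L_s$. Each term in the rooted count is a product of $e_j-1$ distinct edge indicators, none equal to $s$. By Lemma~\ref{loc:product}, each such product has expectation $\widetilde p^{\,e_j-1}+O(1/n)$. The number of injections is $2e_j\prod_{r=2}^{v_j-1}(n-r)$, and \eqref{loc:fallingcomparison} shows that this equals $2e_jn^{v_j-2}(1+O(1/n))$. Together these give
\[
\E L_s(Y)=2\Phi_{\bbeta}(\widetilde p)+O(1/n).
\]
The bound for $\E\{L_s-\ell_0\}^2$ will then follow from the variance bound plus $\{\E L_s-\ell_0\}^2$. It therefore suffices to prove \eqref{loc:meanprecision}, because $\Phi_{\bbeta}$ is Lipschitz on $[0,1]$ uniformly on $\B$.

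For \eqref{loc:meanprecision}, we use \eqref{loc:bernoulli}: $\widetilde p=\E\Psi(L_s(Y))$. A second-order Taylor expansion of $\Psi$ around $\E L_s$, with $\Psi''$ bounded, gives
\[
\widetilde p=\Psi(\E L_s)+O(\Var L_s)=\Psi(2\Phi_{\bbeta}(\widetilde p))+O(1/n)=\varphi_{\bbeta}(\widetilde p)+O(1/n).
\]
Writing $g(u)=u-\varphi_{\bbeta}(u)$, this says $|g(\widetilde p)|\le C/n$. By \eqref{md:uniform-stability} (in the ferromagnetic case) or its analogue from Step~1 of the proof of Theorem~\ref{thm:residual}, $|g|\ge c_*$ away from an $r$-neighbourhood of $p$, and $g'\ge c$ inside it. In the perturbed case we use the perturbed bounds from Lemma~\ref{lem:uniform-input}, Step~1, and on $\mathcal D$ we use $g'\ge1-D(\bbeta)$ on all of $[0,1]$. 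For large $n$, $C/n<c_*$ forces $\widetilde p$ into the neighbourhood, and the mean value theorem then gives $|\widetilde p-p|\le C/(cn)$. Finitely many small $n$ are absorbed into the constant using \eqref{pi:field-bound}.

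The main obstacle is this last step. The root-stability constants must be uniform over the whole family, including the signed perturbations, where ferromagnetic monotonicity is not available. We therefore rely on the compactness bounds already established in Lemma~\ref{lem:uniform-input} rather than re-deriving them. A secondary point is that Lemma~\ref{loc:product} needs the covariance bound \eqref{eq:covariance-bound}, which is valid on all the families considered.
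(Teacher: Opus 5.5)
Your proposal is correct and follows the paper's proof essentially step by step. The steps are the same: a Poincar\'e bound $\Var(L_s(Y))\le C/n$ from \eqref{loc:doublecount}, the mean $\E L_s(Y)=2\Phi_{\bbeta}(\widetilde p)+O(n^{-1})$ via Lemma~\ref{loc:product}, and a second-order Taylor expansion giving $|\widetilde p-\varphi_{\bbeta}(\widetilde p)|\le C/n$, followed by uniform root stability and the mean value theorem. The only difference is in the last step: you import the stability constants from \eqref{md:uniform-stability} and Step~1 of Lemma~\ref{lem:uniform-input}, whereas the paper rederives them directly by compactness of the parameter set, including the compact enlarged family; as you half-note, no ferromagnetic monotonicity is needed there in either version.
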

In the ferromagnetic case, \eqref{loc:meanprecision} was obtained by Fang, Liu, Su and Wang \cite[Proposition 1.1]{FLSW}. The following proof also applies to the signed families.
\begin{proof}[Proof of Lemma~\ref{loc:field}]
We prove the claim in two stages. First, we estimate the field variance by \eqref{eq:poincare} and its mean by \eqref{loc:producteq}. A Taylor expansion of the conditional probability then shows that $\widetilde p$ nearly solves the mean-field equation. We use uniqueness to locate it near $p$ and stability to obtain the error of order $n^{-1}$. Throughout the calculation, take $n$ at least four and at least the number of vertices in every model graph.

Fix a host edge $s$. There are $2(n-2)$ adjacent and $\binom{n-2}{2}$ disjoint choices of $r$. Substituting \eqref{loc:doublecount} into \eqref{eq:poincare} yields
\begin{equation}\label{loc:fieldvariance}
\begin{aligned}
\Var(L_s(Y))
\le\kappa\sum_{r=1}^N \E|\Delta_rL_s(Y)|^2
\le C\left\{2(n-2)n^{-2}+\binom{n-2}{2}n^{-4}\right\}
\le C/n.
\end{aligned}
\end{equation}

To estimate the mean, take expectations in the rooted-count representation \eqref{loc:rootcount}. Each remaining product in \eqref{loc:rootcount} contains $e_j-1$ distinct edges. By \eqref{loc:producteq}, its expectation is $\widetilde p^{\,e_j-1}+O(n^{-1})$, uniformly over embeddings. There are $2e_j\prod_{r=2}^{v_j-1}(n-r)$ candidates, so
\begin{equation}\label{loc:rootexpectation}
\E|\Hom(H_j,G_Y,s)|
=2e_j\left\{\prod_{r=2}^{v_j-1}(n-r)\right\}
\widetilde p^{\,e_j-1}+O(n^{v_j-3}).
\end{equation}
For $j=1$, the remaining product is empty and the error is zero. Using \eqref{loc:rootexpectation} in \eqref{loc:rootfield}, together with \eqref{loc:fallingcomparison} and \eqref{eq:meanfield}, gives
\begin{equation}\label{loc:fieldmean}
\E L_s(Y)=2\Phi_{\bbeta}(\widetilde p)+O(n^{-1}).
\end{equation}
The summed error is uniform because the graph family is fixed and the coefficients are bounded.

The mean and variance estimates now allow us to compare $\widetilde p$ with the fixed point. By \eqref{eq:conditional} and \eqref{loc:bernoulli}, we have $\widetilde p=\E\Psi(L_s(Y))$. Expanding at $\E L_s(Y)$, the linear term has mean zero and the remainder is at most $\|\Psi''\|_\infty\Var(L_s(Y))/2$. Combining this expansion with $\|\Psi'\|_\infty\le1/4$, \eqref{eq:meanfield}, \eqref{loc:fieldvariance} and \eqref{loc:fieldmean}, we obtain
\begin{equation}\label{loc:fixedpointresidual}
\begin{aligned}
|\widetilde p-\varphi_{\bbeta}(\widetilde p)|
&\le|\E\Psi(L_s(Y))-\Psi(\E L_s(Y))|
+|\Psi(\E L_s(Y))-\Psi(2\Phi_{\bbeta}(\widetilde p))|\\
&\le\tfrac12\|\Psi''\|_\infty\Var(L_s(Y))
+\tfrac14|\E L_s(Y)-2\Phi_{\bbeta}(\widetilde p)|
\le C/n.
\end{aligned}
\end{equation}
Let $\rho_*:=\inf_{\bbeta\in\B}\rho_{\bbeta}>0$. Continuity and compactness allow a common $\delta>0$ such that
\begin{equation}\label{loc:stabilitylocal}
1-\varphi_{\bbeta}'(u)\ge\rho_*/2
\quad\text{for }\bbeta\in\B,\ u\in[0,1],\ |u-p_{\bbeta}|\le\delta.
\end{equation}
On the other hand, $u-\varphi_{\bbeta}(u)$ has only the root $p_{\bbeta}$ in $[0,1]$. Its absolute value therefore has a positive lower bound on the compact set
\[
\{(\bbeta,u):\bbeta\in\B,\ u\in[0,1],\ |u-p_{\bbeta}|\ge\delta\}
\]
whenever that set is nonempty. Thus \eqref{loc:fixedpointresidual} forces $|\widetilde p-p|<\delta$ for all $n$ beyond a common threshold. Applying the mean value theorem between $p$ and $\widetilde p$ and using \eqref{loc:stabilitylocal}, we obtain
\begin{equation}\label{loc:inverse-stability}
|\widetilde p-\varphi_{\bbeta}(\widetilde p)|
\ge(\rho_*/2)|\widetilde p-p|.
\end{equation}
Combining \eqref{loc:inverse-stability} with \eqref{loc:fixedpointresidual} proves \eqref{loc:meanprecision}.

Finally, the boundedness of $\Phi_{\bbeta}'$ in \eqref{eq:meanfield}, together with \eqref{loc:meanprecision}, \eqref{loc:fieldmean} and \eqref{loc:fieldvariance}, gives $|\E L_s(Y)-\ell_0|\le C/n$ and
\begin{equation*}
\E\{L_s(Y)-\ell_0\}^2
=\Var(L_s(Y))+\{\E L_s(Y)-\ell_0\}^2
\le C/n+C/n^2,
\end{equation*}
which proves \eqref{loc:fieldprecision}. For the finitely many remaining sample sizes, $p,\widetilde p\in[0,1]$ and \eqref{pi:field-bound} permit a common increase in the constant.
\end{proof}

We have now controlled the local fields and their means. To estimate the edge-count remainder, we need the more precise comparison of the total change in conditional probabilities with $\varphi'(p)$. Expectations below are under $Y=Y_{\bbeta}$, and we first take $n\ge\max\{4,v_1,\ldots,v_k\}$. The uniform coordinate bounds will cover the remaining sample sizes.

For the edge count, write $R=R_{K_2}$ in \eqref{stein:rh} and denote the centered count by $S$, so that
\begin{equation}\label{resp:Rdefinition}
S(y)=E(y)-\mu,\qquad
R(y)=\sum_{j=1}^N\{y_j-q_j(y)\}-\rho S(y).
\end{equation}
Since $\E(Y_j-q_j(Y))=0$ and $\E S(Y)=0$, the remainder is centered. Also, $\Delta_i\sum_{j=1}^N y_j=\Delta_iS=1$ and $1-\rho=\varphi'(p)$. Therefore,
\begin{equation}\label{resp:Rdifference}
\E R(Y)=0,\qquad
\Delta_iR(y)=\varphi'(p)-\sum_{j=1}^N \Delta_iq_j(y).
\end{equation}
Thus the mean estimate for $\Delta_iR$ reduces to approximating $\sum_{j=1}^N\E\Delta_iq_j(Y)$ by $\varphi'(p)$.

For the variance of $\Delta_iR$, we will again apply the Poincar\'e inequality. By differentiating \eqref{resp:Rdifference} once more, its gradient is a sum of second differences of $q_j$. We therefore estimate these sums before treating the mean of $\Delta_iR$.

\begin{lemma}\label{resp:influence}
Uniformly on the fixed compact parameter set, we have
\begin{equation}\label{resp:secondtotal}
\max_i\sum_{\substack{r=1\\r\ne i}}^N\sum_{j=1}^N \|\Delta_{ir}q_j\|_\infty\le C,
\qquad
\max_{i\ne r}\sum_{j=1}^N \|\Delta_{ir}q_j\|_\infty\le C/n.
\end{equation}
\end{lemma}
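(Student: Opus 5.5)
We need to bound $\sum_{r\ne i}\sum_j\|\Delta_{ir}q_j\|_\infty$ and, for fixed $i\ne r$, $\sum_j\|\Delta_{ir}q_j\|_\infty$. The plan is to express $\Delta_{ir}q_j$ through the chain rule for discrete differences of $q_j=\Psi(L_j)$. For fixed $j$, write the four values $L_j(y^{(i,a),(r,b)})$. A second-order discrete Taylor expansion gives
\[
|\Delta_{ir}q_j|\le\tfrac14|\Delta_{ir}L_j|+\tfrac12\|\Psi''\|_\infty\bigl(|\Delta_iL_j|+|\Delta_{ir}L_j|\bigr)\,|\Delta_rL_j|,
\]
using only boundedness of $\Psi'$ and $\Psi''$. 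We then bound each piece by the deterministic array $A$ from \eqref{resp:Adefinition}. Since $L_j=\Delta_jT_{\bbeta}$, we have $\|\Delta_iL_j\|_\infty=\|\Delta_{ij}T_{\bbeta}\|_\infty\le A_{ij}$ and $\|\Delta_{ir}L_j\|_\infty=\|\Delta_{irj}T_{\bbeta}\|_\infty$. When $j\in\{i,r\}$, $\Delta_{ir}q_j$ vanishes, because $q_j$ does not depend on $y_j$. Hence
\[
\|\Delta_{ir}q_j\|_\infty\le C\bigl(\|\Delta_{irj}T_{\bbeta}\|_\infty+A_{ij}A_{rj}+\|\Delta_{irj}T_{\bbeta}\|_\infty A_{rj}\bigr),
\]
and the last term is dominated by the first, since $A_{rj}\le C/n$.

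Next we sum over $j$ for fixed $i\ne r$. By \eqref{resp:thirdsum}, the triple-difference term gives $\sum_j\|\Delta_{irj}T_{\bbeta}\|_\infty\le C_{\cH}A_{ir}\le C/n$ by \eqref{resp:matrixbounds}. For the product term, Cauchy--Schwarz and the row bound $\max_i\sum_jA_{ij}^2\le C/n$ in \eqref{resp:matrixbounds} give $\sum_jA_{ij}A_{rj}\le C/n$. Together these prove the second bound in \eqref{resp:secondtotal}.

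For the first bound we sum over $r\ne i$ as well. The triple-difference part contributes $C_{\cH}\sum_rA_{ir}\le C$ by \eqref{resp:rowsum}. The product part contributes $\sum_jA_{ij}\sum_rA_{rj}\le(\max_j\sum_rA_{rj})(\sum_jA_{ij})\le C$, again by \eqref{resp:rowsum} and symmetry of $A$.

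The only delicate step is the discrete chain rule, which must be arranged so that every term factors into first differences of $L_j$ in $i$ and $r$ or into the mixed difference $\Delta_{ir}L_j$. We will write it out explicitly: set $\ell_{ab}=L_j(y^{(i,a),(r,b)})$, so that $\Delta_{ir}q_j=\Psi(\ell_{11})-\Psi(\ell_{10})-\Psi(\ell_{01})+\Psi(\ell_{00})$, and apply the mean value theorem twice. Uniformity in the parameter follows because $A$ uses $|\bbeta|$, which is bounded on the compact family. In the signed case, $T_{\bbeta}$ includes the added graphs, and the counting estimates hold for them verbatim. Finitely many small $n$ are handled by enlarging $C$.
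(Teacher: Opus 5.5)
Your proposal is correct and follows the paper's proof. Both split $\Delta_{ir}q_j$ into a third-difference term, summed via \eqref{resp:thirdsum}, and a product term $A_{ij}A_{rj}$, summed via \eqref{resp:rowsum}--\eqref{resp:matrixbounds} and the symmetry of $A$. The differences are cosmetic:
\begin{itemize}
\item The paper gets the chain-rule bound from the multiaffine extension of $T$ and a double integral in coordinates $i,r$, where you use a two-step mean value argument on the four vertex values. In your version, the constant in front of $\|\Psi''\|_\infty|\Delta_iL_j|\,|\Delta_rL_j|$ should be $1$, not $\tfrac12$; this does not affect the argument.
\item For the second bound, the paper uses $\max_jA_{jr}\sum_jA_{ji}$ where you use Cauchy--Schwarz.
\end{itemize}
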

\begin{proof}[Proof of Lemma~\ref{resp:influence}]
To prove \eqref{resp:secondtotal}, we differentiate $q_j=\Psi(L_j)$ in two coordinates. This produces a third difference of the energy and a product of two differences of the local field. Recall that $A_{ir}=\Delta_{ir}T_{|\bbeta|}(\mathbf1)$ in \eqref{resp:Adefinition} bounds the absolute second difference of $T$. To justify the differentiation, extend $T$ to $[0,1]^N$ by its multiaffine expression and set $q_j=\Psi(\partial_jT)$ on the cube. At its vertices, $\partial_jT=\Delta_jT=L_j$, so this agrees with \eqref{eq:conditional}.

Fix $i\ne r$. If $j=i$ or $j=r$, then $q_j$ is independent of coordinate $j$, and $\Delta_{ir}q_j=0$. Otherwise, applying the fundamental theorem of calculus in coordinates $i$ and $r$ yields
\begin{equation}\label{resp:secondintegral}
\Delta_{ir}q_j(y)=\int_0^1\!\int_0^1
\bigl\{\Psi'(\partial_jT)\partial_{irj}T
+\Psi''(\partial_jT)(\partial_{ij}T)(\partial_{rj}T)\bigr\}
(y^{(i,u;r,w)})\,du\,dw.
\end{equation}
Here $y^{(i,u;r,w)}$ is the cube point obtained by setting the two coordinates to $u,w$. Each mixed partial derivative is the multilinear interpolation of its vertex values. Its absolute value is therefore bounded by the supremum norm of the corresponding vertex difference, also when the coefficients have different signs. Applying \eqref{resp:Adefinition} and $\|\Psi'\|_\infty\le1/4$ in \eqref{resp:secondintegral}, we obtain
\begin{equation}\label{resp:chainrule}
\|\Delta_{ir}q_j\|_\infty
\le\tfrac14\|\Delta_{irj}T\|_\infty
+\|\Psi''\|_\infty A_{ji}A_{jr}.
\end{equation}
For the first inequality in \eqref{resp:secondtotal}, sum the right-hand side of \eqref{resp:chainrule} first over $j$ and then over $r\ne i$. Combining \eqref{resp:thirdsum} and \eqref{resp:rowsum}, we bound its third-difference term by
\[
\sum_{\substack{r=1\\r\ne i}}^N\sum_{j=1}^N \|\Delta_{irj}T\|_\infty
\le C\sum_{r=1}^N A_{ir}\le C.
\]
For the product $A_{ji}A_{jr}$ on the right of \eqref{resp:chainrule}, use nonnegativity, symmetry and \eqref{resp:rowsum} to obtain
\[
\sum_{\substack{r=1\\r\ne i}}^N\sum_{j=1}^N A_{ji}A_{jr}
\le\sum_{j=1}^N A_{ji}\sum_{r=1}^N A_{jr}
\le C\sum_{j=1}^N A_{ji}\le C.
\]
Substituting the two sums into \eqref{resp:chainrule} proves the first inequality in \eqref{resp:secondtotal}. For its second inequality, keep $i\ne r$ fixed. By \eqref{resp:thirdsum} and \eqref{resp:matrixbounds}, the sum over $j$ of the third-difference term in \eqref{resp:chainrule} is at most $CA_{ir}\le C/n$. For the product $A_{ji}A_{jr}$ in \eqref{resp:chainrule}, we have
\[
\sum_{j=1}^N A_{ji}A_{jr}
\le\left(\max_jA_{jr}\right)\sum_{j=1}^N A_{ji}
\le C/n,
\]
by \eqref{resp:rowsum}--\eqref{resp:matrixbounds}. Substitution into \eqref{resp:chainrule} proves the second inequality in \eqref{resp:secondtotal}.
\end{proof}

We can now estimate $\Delta_iR$ in \eqref{resp:Rdifference}. The second-difference bounds just proved, together with the Poincar\'e inequality, control its variance. The mean needs a finer argument. We retain a mixed local-field term inside the expectation and use \eqref{eq:covariance-bound} to estimate it. This will identify $\varphi'(p)$ as the leading term of $\sum_{j=1}^N\E\Delta_iq_j(Y)$ with error $O(n^{-1})$.

\begin{lemma}\label{resp:mean}
For every edge $i$, we have, uniformly over the fixed parameter set,
\begin{equation}\label{resp:meanbounds}
|\E\Delta_iR(Y)|\le C/n,\qquad
\Var(\Delta_iR(Y))\le C/n,\qquad
\|\Delta_iR\|_\infty\le C.
\end{equation}
\end{lemma}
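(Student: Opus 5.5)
The three bounds in \eqref{resp:meanbounds} are taken in increasing order of difficulty: the sup bound, then the variance, then the mean.

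\emph{Sup bound.} By \eqref{resp:Rdifference}, $\|\Delta_iR\|_\infty\le\varphi'(p)+\sum_j\|\Delta_iq_j\|_\infty$. By \eqref{resp:matrixbounds} and \eqref{resp:rowsum}, the sum is at most $\tfrac14\sum_jA_{ij}\le C$.

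\emph{Variance.} Apply the Poincar\'e inequality \eqref{eq:poincare} to $F=\Delta_iR$. Since $\Delta_r\Delta_iR=-\sum_j\Delta_{ir}q_j$ and $\Delta_{ii}=0$,
\[
\Var(\Delta_iR(Y))\le\kappa\sum_{r\ne i}\Bigl(\sum_j\|\Delta_{ir}q_j\|_\infty\Bigr)^2
\le\kappa\Bigl(\max_{r\ne i}\sum_j\|\Delta_{ir}q_j\|_\infty\Bigr)\sum_{r\ne i}\sum_j\|\Delta_{ir}q_j\|_\infty .
\]
By the two inequalities in \eqref{resp:secondtotal}, the right side is at most $C n^{-1}\cdot C$, which gives $O(n^{-1})$.

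\emph{Mean: reduction.} It suffices to show $\sum_{j\ne i}\E\Delta_iq_j(Y)=\varphi'(p)+O(n^{-1})$. Write $\Delta_iq_j=\Psi(L_j(y^{(i,1)}))-\Psi(L_j(y^{(i,0)}))$. A second-order expansion of $\Psi$ around $L_j(y)$ at the two endpoints gives
\[
\Delta_iq_j(y)=\Psi'(L_j(y))\,\Delta_iL_j(y)+\epsilon_{ij}(y).
\]
Here $|\epsilon_{ij}|\le C\|\Delta_iL_j\|_\infty^2\le CA_{ij}^2$ by \eqref{resp:matrixbounds}, since $\Delta_iL_j=\Delta_{ij}T$. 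Summing over $j$ and using $\max_i\sum_jA_{ij}^2\le C/n$ shows that the errors contribute $O(n^{-1})$. For the main term, split $\Psi'(L_j)=\Psi'(\ell_0)+\{\Psi'(L_j)-\Psi'(\ell_0)\}$, where $\Psi'(\ell_0)=p(1-p)$.

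\emph{Mean: leading term.} The constant part gives $p(1-p)\sum_j\E\Delta_{ij}T(Y)$. By \eqref{pi:edge-deletion-root}, this sum equals $\sum_j\beta_jn^{2-v_j}\sum_{b}\E\Delta_i|\Hom(H_j\setminus b,G_Y)|$, which is a rooted-count expectation. Lemma~\ref{loc:product} evaluates it, as in \eqref{loc:rootexpectation}, as $2\Phi_{\bbeta}'(\widetilde p)+O(n^{-1})$, and \eqref{loc:meanprecision} replaces $\widetilde p$ by $p$. The leading term is therefore $2p(1-p)\Phi'_{\bbeta}(p)=\varphi'(p)$ up to $O(n^{-1})$. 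This uses $\Psi'(2\Phi(p))=p(1-p)$.

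\emph{Mean: cross term (main obstacle).} The remaining term is $\E\sum_j\{\Psi'(L_j)-\Psi'(\ell_0)\}\Delta_{ij}T$. A crude Cauchy--Schwarz bound using \eqref{loc:fieldprecision} gives only $\sum_jA_{ij}\,n^{-1/2}=O(n^{-1/2})$, which is too weak. The plan is instead to keep the product inside the expectation and write
\[
\E[\{\Psi'(L_j)-\Psi'(\ell_0)\}\Delta_{ij}T]
=\Cov\bigl(\Psi'(L_j),\Delta_{ij}T\bigr)
+\E\{\Psi'(L_j)-\Psi'(\ell_0)\}\,\E\Delta_{ij}T .
\]

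For the second piece, expand $\Psi'$ to second order. The linear part is bounded by $|\E L_j-\ell_0|$ and the quadratic part by $\E(L_j-\ell_0)^2$, and \eqref{loc:fieldprecision} bounds both by $C/n$. Since $|\E\Delta_{ij}T|\le A_{ij}$, summing over $j$ gives $O(n^{-1})$.

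For the covariance, apply \eqref{eq:covariance-bound} with $F=\Delta_{ij}T$ and $G=\Psi'(L_j)$. Here $\sum_r\|\Delta_r\Delta_{ij}T\|_\infty\le CA_{ij}$ by \eqref{resp:thirdsum}, and $\max_r\|\Delta_r\Psi'(L_j)\|_\infty\le C\max_rA_{jr}\le C/n$. Each $j$ therefore contributes $CA_{ij}/n$, and the sum over $j$ is $O(n^{-1})$ by \eqref{resp:rowsum}.

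Finally, for the finitely many small $n$ that are excluded, the uniform sup bound covers all three estimates after enlarging $C$.
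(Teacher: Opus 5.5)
Your proposal is correct and follows the paper's proof. The sup and variance bounds are identical to the paper's. For the mean, you use the same key step: you keep the mixed term inside the expectation and bound it with \eqref{eq:covariance-bound}, using $\sum_r\|\Delta_r\Delta_{ij}T\|_\infty\le CA_{ij}$ and a $C/n$ coordinate bound, and then you evaluate $\sum_j\E\Delta_{ij}T$ by the same marked two-root count. The only difference is where you expand: the paper expands around $\ell_0+\zeta$ with $\zeta=L_j(y^{(i,0)})-\ell_0$ and applies the covariance bound to $(\Delta_{ij}T,\zeta)$, whereas you expand around $L_j(y)$ and apply it directly to $(\Delta_{ij}T,\Psi'(L_j))$, which is equally valid. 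In the sup bound, write $|\varphi'(p)|$, since $\varphi'(p)$ can be negative on $\mathcal D$.
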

\begin{proof}[Proof of Lemma~\ref{resp:mean}]
We first prove the supremum and variance bounds in \eqref{resp:meanbounds}, using Lemma~\ref{resp:influence} and \eqref{eq:poincare}. By \eqref{resp:Rdifference}, \eqref{resp:matrixbounds} and the column bound in \eqref{resp:rowsum}, we have
\[
\|\Delta_iR\|_\infty
\le |\varphi'(p)|+\tfrac14\sum_{j=1}^N A_{ji}\le C.
\]
If $r=i$, then $\Delta_{ri}R=0$. Otherwise, another difference in \eqref{resp:Rdifference} eliminates the constant term and gives
\[
\Delta_{ri}R=-\sum_{j=1}^N \Delta_{ir}q_j,
\qquad
\|\Delta_{ri}R\|_\infty
\le\sum_{j=1}^N \|\Delta_{ir}q_j\|_\infty.
\]
Applying \eqref{eq:poincare} to $\Delta_iR$ and bounding a sum of squares by the maximum times the sum, we have
\begin{equation*}
\Var(\Delta_iR(Y))\le C\sum_{r=1}^N \E|\Delta_{ri}R(Y)|^2\le C\left(\max_{r\ne i}\sum_{j=1}^N \|\Delta_{ir}q_j\|_\infty\right)\sum_{\substack{r=1\\r\ne i}}^N\sum_{j=1}^N \|\Delta_{ir}q_j\|_\infty\le C/n,
\end{equation*}
where the last step uses both inequalities in \eqref{resp:secondtotal}.

It remains to prove the mean estimate in \eqref{resp:meanbounds}. By \eqref{resp:Rdifference}, we need to approximate $\sum_{j=1}^N\E\Delta_iq_j(Y)$ by $\varphi'(p)$. Fix $j\ne i$ and let
\begin{equation}\label{resp:azeta}
a(y)=\Delta_{ij}T(y),\qquad
\zeta(y)=L_j(y^{(i,0)})-\ell_0,
\qquad \ell_0=2\Phi_{\bbeta}(p).
\end{equation}
By \eqref{eq:conditional}, the difference $\Delta_iq_j=\Psi(\ell_0+\zeta+a)-\Psi(\ell_0+\zeta)$ has an expansion involving $a\zeta$, whose expectation we estimate first. Both functions in \eqref{resp:azeta} are independent of $y_i,y_j$. By multiaffinity, we have
\begin{equation}\label{resp:zetadecomposition}
\begin{aligned}
L_j(y)=L_j(y^{(i,0)})+y_i a(y),\quad
\zeta(Y)=L_j(Y)-\ell_0-Y_i a(Y).
\end{aligned}
\end{equation}
By \eqref{resp:Adefinition}--\eqref{resp:matrixbounds}, $\|a\|_\infty\le A_{ij}\le C/n$. Substituting this bound and \eqref{loc:fieldprecision} into \eqref{resp:zetadecomposition} yields $|\E\zeta(Y)|+\E\zeta(Y)^2\le C/n$. Moreover, $\Delta_i\zeta=0$, while for $r\ne i$ coordinate replacement commutes with $\Delta_r$, so
\[
\Delta_r\zeta(y)=(\Delta_rL_j)(y^{(i,0)}),\qquad
\|\Delta_r\zeta\|_\infty\le A_{jr}.
\]
Consequently, the cumulant generating function satisfies
\begin{equation}\label{resp:zetabounds}
|\E\zeta(Y)|\le C/n,\qquad
\E\zeta(Y)^2\le C/n,
\qquad \max_r\|\Delta_r\zeta\|_\infty\le C/n.
\end{equation}

To estimate $\E[a(Y)\zeta(Y)]$, first recall from \eqref{resp:azeta} that $a=\Delta_{ij}T$. By \eqref{resp:thirdsum}, we have
\[
\sum_{r=1}^N \|\Delta_ra\|_\infty
=\sum_{r=1}^N \|\Delta_{ijr}T\|_\infty\le C A_{ij}.
\]
Using \eqref{eq:covariance-bound} and \eqref{resp:zetabounds}, we obtain
\begin{equation}\label{resp:mixedcov}
|\Cov(a(Y),\zeta(Y))|
\le\kappa\left(\sum_{r=1}^N \|\Delta_ra\|_\infty\right)
\max_r\|\Delta_r\zeta\|_\infty
\le C A_{ij}/n.
\end{equation}
Combining \eqref{resp:mixedcov} and \eqref{resp:zetabounds}, with $|a|\le A_{ij}$ from \eqref{resp:Adefinition}, yields
\begin{equation}\label{resp:mixedbound}
|\E[a(Y)\zeta(Y)]|
\le|\Cov(a(Y),\zeta(Y))|+|\E a(Y)|\,|\E\zeta(Y)|
\le C A_{ij}/n.
\end{equation}
This is the point at which the covariance estimate improves the precision. If we instead bounded the product by $\|a\|_\infty\E|\zeta|$, then \eqref{resp:zetabounds} would give only $O(A_{ij}/\sqrt n)$. By keeping the expectation in \eqref{resp:mixedbound}, we obtain the $O(A_{ij}/n)$ bound needed when summing over $j$.

We now apply \eqref{resp:mixedbound} to the expansion of $\Delta_iq_j$. By \eqref{eq:conditional} and \eqref{resp:azeta}--\eqref{resp:zetadecomposition}, the fundamental theorem of calculus gives
\begin{equation}\label{resp:firstintegral}
\Delta_iq_j(y)=a(y)\int_0^1
\Psi'(\ell_0+\zeta(y)+t a(y))\,dt.
\end{equation}
This identity is valid for either sign of $a(y)$.

For every real $z$, Taylor's formula gives
\begin{equation}\label{resp:logistictaylor}
\left|\Psi'(\ell_0+z)-\Psi'(\ell_0)-\Psi''(\ell_0)z\right|
\le\tfrac12\|\Psi^{(3)}\|_\infty z^2.
\end{equation}
The fixed-point equation implies $\Psi(\ell_0)=p$ and $\Psi'(\ell_0)=p(1-p)$. Substituting \eqref{resp:logistictaylor} into \eqref{resp:firstintegral}, the linear term in $ta$ integrates to $\frac12\Psi''(\ell_0)a^2$. For the Taylor remainder from \eqref{resp:logistictaylor} in \eqref{resp:firstintegral}, we use
\[
|a|(\zeta+t a)^2
\le 2A_{ij}\zeta^{2}+2(A_{ij})^3,
\qquad 0\le t\le1.
\]
Since $A_{ij}$ is uniformly bounded by \eqref{resp:matrixbounds}, its cubic power is absorbed into the quadratic term. We obtain the pointwise estimate
\begin{equation}\label{resp:taylormixed}
\left|\Delta_iq_j(y)-p(1-p)a(y)-\Psi''(\ell_0)a(y)\zeta(y)\right|
\le C\{A_{ij}\zeta(y)^2+A_{ij}^2\}.
\end{equation}
Taking expectations in \eqref{resp:taylormixed} and applying \eqref{resp:zetabounds} and \eqref{resp:mixedbound}, we have
\begin{equation}\label{resp:pairmean}
\E\Delta_iq_j(Y)
=p(1-p)\E\Delta_{ij}T(Y)
+O\{A_{ij}/n+(A_{ij})^2\}.
\end{equation}
The constant is uniform in $i,j,\bbeta$. The terms vanish when $j=i$, so this index may be included. Combining \eqref{resp:rowsum}--\eqref{resp:matrixbounds}, we bound the errors in \eqref{resp:pairmean} by
\begin{equation}\label{resp:pairerrorsum}
\sum_{j=1}^N \{A_{ij}/n+(A_{ij})^2\}
\le\frac1n\sum_{j=1}^N A_{ij}
+\left(\max_jA_{ij}\right)\sum_{j=1}^N A_{ij}
\le C/n.
\end{equation}

The error terms in \eqref{resp:pairmean} now have the required total size. To finish the mean estimate, we evaluate its leading sum $\sum_{j=1}^N\E\Delta_{ij}T(Y)$. For an energy graph $H_\ell$, choose two distinct source edges $(a,b)$ and an injection with $\iota(a)=i$. Then the second host edge is determined by $j=\iota(b)\ne i$, so
\begin{equation}\label{resp:markedtworoot}
\begin{split}
\sum_{j=1}^N \Delta_{ij}|\Hom(H_\ell,G_y)|
=\sum_{\substack{a,b\in\mathcal E(H_\ell)\\a\ne b}}
\ \sum_{\substack{\iota:\mathcal V(H_\ell)\hookrightarrow[n]\\\iota(a)=i}}
\prod_{c\in\mathcal E(H_\ell)\setminus\{a,b\}}y_{\iota(c)}.
\end{split}
\end{equation}
In \eqref{resp:markedtworoot}, there are $2e_\ell(e_\ell-1)\prod_{r=2}^{v_\ell-1}(n-r)$ marked embeddings, each leaving a product of $e_\ell-2$ distinct edges. For $e_\ell=2$, this is the empty product, equal to one. For $e_\ell\ge3$, \eqref{loc:producteq} and \eqref{loc:meanprecision} give
\[
\left|\E\prod_{c\in\mathcal E(H_\ell)\setminus\{a,b\}}Y_{\iota(c)}-p^{e_\ell-2}\right|
\le C/n+|\widetilde p^{\,e_\ell-2}-p^{e_\ell-2}|
\le C/n,
\]
where $|x^m-y^m|\le m|x-y|$ for $x,y\in[0,1]$ was used in the last step. Summing over $O(n^{v_\ell-2})$ candidates and multiplying by $\beta_\ell n^{2-v_\ell}$ produces error $O(|\beta_\ell|/n)$. Summing over the fixed family and applying \eqref{loc:fallingcomparison}, we obtain
\begin{equation}\label{resp:energymean}
\sum_{j=1}^N \E\Delta_{ij}T(Y)=2\sum_{\ell=2}^k\beta_\ell e_\ell(e_\ell-1)\frac{\prod_{r=2}^{v_\ell-1}(n-r)}{n^{v_\ell-2}}p^{e_\ell-2}+O(n^{-1})=2\Phi_{\bbeta}'(p)+O(n^{-1}).
\end{equation}
Combining \eqref{resp:energymean}, \eqref{resp:pairmean} and \eqref{resp:pairerrorsum}, and then using \eqref{eq:meanfield}, we obtain
\begin{equation}\label{resp:totalmean}
\E\sum_{j=1}^N \Delta_iq_j(Y)
=2p(1-p)\Phi_{\bbeta}'(p)+O(n^{-1})
=\varphi'(p)+O(n^{-1}).
\end{equation}
Substituting \eqref{resp:totalmean} into \eqref{resp:Rdifference} proves the remaining mean bound in \eqref{resp:meanbounds}. For the finitely many smaller values of $n$, \eqref{resp:Rdifference}, \eqref{resp:rowsum} and \eqref{resp:matrixbounds} give uniform boundedness, so the same $C/n$ form follows by increasing $C$. This completes the proof.
\end{proof}

To apply the two Stein equations, we need estimates for the edge-count remainder $R$ and the corresponding weight $K=K_{K_2}$. By \eqref{stein:kh}, this weight is
\begin{equation}\label{resp:Kdefinition}
K(y)=\frac{1}{\rho\sigma^2}\sum_{i=1}^Nq_i(y)(1-q_i(y)).
\end{equation}
The denominator equals $Np(1-p)$ by \eqref{eq:edge-scale}.

We can now pass from $\Delta_iR$ to $R$ itself by the Poincar\'e inequality. The same inequality controls the fluctuations of $K$. For the last bound below, we estimate the mean and variance of the entire weighted sum. These edge-count estimates will be transferred to $R_H$ and $K_H$ in Lemma~\ref{stein:remainder}.

\begin{lemma}\label{resp:main}
Uniformly on the fixed compact parameter set, we have
\begin{align}
\E R(Y)^2\le Cn,\qquad
\sum_{i=1}^N\E|\Delta_iR(Y)|&\le Cn^{3/2},
\label{resp:Rbounds}\\
0\le K(y)\le C,\qquad \|K(Y)-1\|_2&\le C/n,
\label{resp:Kbounds}\\
\E\left|\sum_{i=1}^Nq_i(Y)(1-q_i(Y))\Delta_iR(Y)\right|
&\le Cn.\label{resp:weightedbound}
\end{align}
\end{lemma}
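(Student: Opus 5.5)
We derive all three estimates from the Poincar\'e inequality \eqref{eq:poincare}, the covariance bound \eqref{eq:covariance-bound}, and Lemmas~\ref{resp:influence}--\ref{resp:mean}. First take $n\ge\max\{4,v_1,\ldots,v_k\}$; the smaller sample sizes follow from uniform coordinate bounds.

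\emph{Bounds \eqref{resp:Rbounds}.} Since $\E R(Y)=0$ by \eqref{resp:Rdifference}, \eqref{eq:poincare} gives $\E R(Y)^2\le\kappa\sum_i\E|\Delta_iR(Y)|^2$. By Lemma~\ref{resp:mean},
\[
\E|\Delta_iR(Y)|^2=\Var(\Delta_iR(Y))+\{\E\Delta_iR(Y)\}^2\le C/n,
\]
so summing over $N\asymp n^2$ edges yields $\E R(Y)^2\le Cn$. For the first-moment sum, Cauchy--Schwarz gives $\E|\Delta_iR(Y)|\le Cn^{-1/2}$, and summing produces $Cn^{3/2}$.

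\emph{Bounds \eqref{resp:Kbounds}.} Write $K=\{Np(1-p)\}^{-1}\sum_i q_i(1-q_i)$. Boundedness follows from $q_i(1-q_i)\le1/4$ and \eqref{eq:compact}, and nonnegativity is immediate. For the $L^2$ bound we split $\|K-1\|_2^2=\Var K+(\E K-1)^2$.

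For the variance, apply \eqref{eq:poincare}. Each $\Delta_r\{q_i(1-q_i)\}$ is bounded by $\|\Delta_rq_i\|_\infty\le A_{ir}/4$ by \eqref{resp:matrixbounds}. Hence $|\Delta_r\sum_iq_i(1-q_i)|\le\sum_iA_{ir}\le C$ by \eqref{resp:rowsum}, so $\Var\bigl(\sum_iq_i(1-q_i)\bigr)\le CN$. Dividing by $N^2$ gives $\Var K\le C/N\le C/n^2$.

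For the mean, write $q_i=\Psi(L_i)$ and Taylor expand $g=\Psi(1-\Psi)$ at $\ell_0$, where $g(\ell_0)=p(1-p)$. This gives
\[
\E g(L_i)=p(1-p)+g'(\ell_0)\E(L_i-\ell_0)+O\bigl(\E(L_i-\ell_0)^2\bigr)=p(1-p)+O(1/n)
\]
by \eqref{loc:fieldprecision}. Hence $|\E K-1|\le C/n$, and therefore $\|K-1\|_2\le C/n$.

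\emph{Bound \eqref{resp:weightedbound}.} This is the main obstacle. Set $G(y)=\sum_iq_i(1-q_i)\Delta_iR$; it is a sum of $N$ terms of size $O(1)$. We bound $|\E G|$ and $\Var G$ separately.

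For the mean, split $q_i(1-q_i)=p(1-p)+\{q_i(1-q_i)-p(1-p)\}$. The first part contributes $p(1-p)\sum_i\E\Delta_iR$, which is $O(N/n)=O(n)$ by Lemma~\ref{resp:mean}. The second part is bounded by Cauchy--Schwarz, since $\|q_i(1-q_i)-p(1-p)\|_2\le Cn^{-1/2}$ by \eqref{loc:fieldprecision} and $\|\Delta_iR\|_2\le Cn^{-1/2}$. The product is $O(1/n)$ per edge, so summing gives $O(n)$.

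For the variance, use \eqref{eq:poincare}. For fixed $r$,
\[
\Delta_rG=\sum_i\Delta_r\{q_i(1-q_i)\}\,\Delta_iR(y^{(r,\cdot)})+\sum_iq_i(1-q_i)\Delta_{ri}R,
\]
with the discrete product rule applied at shifted configurations. The first sum is at most $\sum_iA_{ir}\|\Delta_iR\|_\infty\le C$ by \eqref{resp:meanbounds}. The second is at most $\sum_{i\ne r}\sum_j\|\Delta_{ir}q_j\|_\infty\le C$ by the first inequality of \eqref{resp:secondtotal}. Thus $\|\Delta_rG\|_\infty\le C$, and $\Var G\le CN\asymp n^2$, so $\|G-\E G\|_2\le Cn$.

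Combining the mean and variance bounds gives $\E|G|\le Cn$. The delicate point is the second-difference term: it requires the summed bound over all $i$ in \eqref{resp:secondtotal}, not a crude count, and this is exactly what keeps $\|\Delta_rG\|_\infty=O(1)$. Finally, the finitely many excluded $n$ are absorbed into the constant using the uniform bounds \eqref{resp:rowsum} and \eqref{resp:matrixbounds}.
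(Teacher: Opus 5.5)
Your proposal is correct and follows essentially the same route as the paper. The structure matches: Poincar\'e with Lemma~\ref{resp:mean} for $R$, a mean--variance split for $K$, and a mean--variance split for the weighted sum, where the product rule and the first inequality of \eqref{resp:secondtotal} give $\|\Delta_rG\|_\infty\le C$. The only difference is minor and both versions work: you get the means of $q_i(1-q_i)$ from the field precision \eqref{loc:fieldprecision} via Taylor and Lipschitz bounds on $\Psi'$ at $\ell_0$, whereas the paper uses $\E[q_i(1-q_i)]=\widetilde p(1-\widetilde p)-\Var(q_i)$ with \eqref{loc:meanprecision} and Poincar\'e bounds on $\Var(q_i)$ and $\Var\{q_i(1-q_i)\}$.
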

\begin{proof}[Proof of Lemma~\ref{resp:main}]
We first prove \eqref{resp:Rbounds} from the estimates for $\Delta_iR$. We then treat $K-1$ and use the same variance method for the whole sum in \eqref{resp:weightedbound}. By \eqref{resp:meanbounds}, we have
\begin{equation}\label{resp:Rgradientsecondmoment}
\E|\Delta_iR(Y)|^2
=\Var(\Delta_iR(Y))
+|\E\Delta_iR(Y)|^2
\le C/n+C/n^2.
\end{equation}
Using the centering in \eqref{resp:Rdifference} and \eqref{resp:Rgradientsecondmoment} in \eqref{eq:poincare}, we obtain
\[
\E R(Y)^2
\le\kappa\sum_{i=1}^N \E|\Delta_iR(Y)|^2
\le CN(n^{-1}+n^{-2})\le Cn.
\]
By Cauchy--Schwarz and \eqref{resp:Rgradientsecondmoment}, $\E|\Delta_iR(Y)|\le Cn^{-1/2}$. Summing over $N=\binom n2$ edges gives $\sum_{i=1}^N \E|\Delta_iR(Y)|\le CNn^{-1/2}\le Cn^{3/2}$, proving \eqref{resp:Rbounds}.

For \eqref{resp:Kbounds}, we first estimate the mean of $K$. Applying \eqref{eq:poincare} to $q_i$ and using \eqref{resp:matrixbounds} gives
\begin{equation}\label{resp:qvariance}
\Var(q_i(Y))
\le\kappa\sum_{r=1}^N \E|\Delta_rq_i(Y)|^2
\le\frac{\kappa}{16}\sum_{r=1}^N (A_{ir})^2\le C/n.
\end{equation}
Since $\E q_i(Y)=\widetilde p$ by \eqref{loc:bernoulli}, we can use the identity $\E[q_i(1-q_i)]=\widetilde p(1-\widetilde p)-\Var(q_i)$ to obtain
\begin{equation}\label{resp:conditionalmean}
\begin{gathered}
\sum_{i=1}^N \E[q_i(Y)(1-q_i(Y))]
=N\widetilde p(1-\widetilde p)-\sum_{i=1}^N \Var(q_i(Y)),\\
\left|\sum_{i=1}^N \E[q_i(Y)(1-q_i(Y))]-Np(1-p)\right|\le Cn.
\end{gathered}
\end{equation}
For the second line of \eqref{resp:conditionalmean}, use $|\widetilde p(1-\widetilde p)-p(1-p)|\le|\widetilde p-p|$ and combine \eqref{loc:meanprecision} with \eqref{resp:qvariance}. Dividing \eqref{resp:conditionalmean} by $\rho\sigma^2=Np(1-p)$, and using \eqref{resp:Kdefinition} and \eqref{eq:compact}, we obtain
\begin{equation}\label{resp:Kmean}
|\E K(Y)-1|\le C/n.
\end{equation}

To complete \eqref{resp:Kbounds}, we estimate the variance of $K$ through its single-edge differences. The map $u\mapsto u(1-u)$ is $1$-Lipschitz on $[0,1]$. By \eqref{resp:matrixbounds}, \eqref{resp:rowsum} and \eqref{resp:Kdefinition},
\begin{equation}\label{resp:Kgradient}
\|\Delta_rK\|_\infty
\le\frac{1}{\rho\sigma^2}
\sum_{i=1}^N \|\Delta_r\{q_i(1-q_i)\}\|_\infty
\le\frac{1}{4\rho\sigma^2}\sum_{i=1}^N A_{ir}
\le C/N.
\end{equation}
Substituting \eqref{resp:Kgradient} into \eqref{eq:poincare} gives $\Var(K(Y))\le\kappa N(C/N)^2\le C/N$. Since $N\asymp n^2$, \eqref{resp:Kmean} then yields
\[
\|K(Y)-1\|_2^2
=\Var(K(Y))+|\E K(Y)-1|^2\le C/n^2.
\]
Furthermore, $0\le q_i(1-q_i)\le1/4$ implies $0\le K(y)\le N/(4\rho\sigma^2)=[4p(1-p)]^{-1}\le C$, by \eqref{eq:edge-scale} and \eqref{eq:compact}. This proves \eqref{resp:Kbounds}.

It remains to control the entire weighted sum in \eqref{resp:weightedbound}. Summing absolute values using \eqref{resp:Rbounds} would give only $O(n^{3/2})$, so it would not suffice. Instead, we estimate the mean of the sum and its fluctuation around that mean. Let
\begin{equation}\label{resp:Udefinition}
U(y)=\sum_{i=1}^N q_i(y)(1-q_i(y))\Delta_iR(y).
\end{equation}
The Lipschitz bound for $u(1-u)$, together with \eqref{eq:poincare} and \eqref{resp:matrixbounds}, gives
\begin{equation}\label{resp:weightvariance}
\Var\{q_i(Y)(1-q_i(Y))\}
\le\kappa\sum_{r=1}^N \|\Delta_r\{q_i(1-q_i)\}\|_\infty^2
\le\frac{\kappa}{16}\sum_{r=1}^N (A_{ir})^2\le C/n.
\end{equation}
Splitting each expected product in \eqref{resp:Udefinition} into a product of means and a covariance, and applying Cauchy--Schwarz with \eqref{resp:weightvariance} and \eqref{resp:meanbounds}, we obtain
\begin{equation}\label{resp:weightedmean}
|\E U(Y)|
\le\sum_{i=1}^N \left\{\tfrac14|\E\Delta_iR(Y)|
+\sqrt{\Var(q_i(Y)(1-q_i(Y)))
\Var(\Delta_iR(Y))}\right\}
\le CN/n\le Cn.
\end{equation}

To bound the fluctuations of $U$, use the exact product-difference identity
\begin{equation}\label{resp:productrule}
\Delta_r(FG)(y)
=(\Delta_rF)(y)G(y^{(r,1)})
+F(y^{(r,0)})(\Delta_rG)(y).
\end{equation}
Apply \eqref{resp:productrule} to each summand in \eqref{resp:Udefinition}, with $F=q_i(1-q_i)$ and $G=\Delta_iR$. Using $\|\Delta_iR\|_\infty\le C$ from \eqref{resp:meanbounds} and $q_i(1-q_i)\le1/4$, we obtain
\begin{equation}\label{resp:weighteddifference}
\|\Delta_rU\|_\infty
\le C\sum_{i=1}^N \|\Delta_rq_i\|_\infty
+\tfrac14\sum_{i=1}^N \|\Delta_{ri}R\|_\infty
\le C.
\end{equation}
For the first sum on the right of \eqref{resp:weighteddifference}, combine \eqref{resp:matrixbounds} and \eqref{resp:rowsum} to obtain $\frac C4\sum_{i=1}^N A_{ir}\le C$. In its second sum, the $i=r$ difference vanishes. By \eqref{resp:Rdifference} and the first inequality in \eqref{resp:secondtotal}, with root coordinate $r$, we have
\[
\sum_{\substack{i=1\\i\ne r}}^N\|\Delta_{ri}R\|_\infty
\le\sum_{\substack{i=1\\i\ne r}}^N\sum_{j=1}^N \|\Delta_{ri}q_j\|_\infty\le C.
\]
Thus \eqref{eq:poincare} and \eqref{resp:weighteddifference} imply
\begin{equation}\label{resp:Uvariance}
\Var(U(Y))
\le\kappa\sum_{r=1}^N \E|\Delta_rU(Y)|^2\le CN.
\end{equation}
Finally, combining \eqref{resp:weightedmean}, \eqref{resp:Uvariance} and $\sqrt N\le Cn$, we have
\[
\E|U(Y)|
\le|\E U(Y)|+\E|U(Y)-\E U(Y)|
\le|\E U(Y)|+\sqrt{\Var(U(Y))}
\le Cn.
\]
By the definition \eqref{resp:Udefinition}, this is \eqref{resp:weightedbound}, which completes the proof.
\end{proof}

\section{Proofs of the lemmas in Sections~\ref{sec:stein} and \ref{sec:md}}\label{sec:auxiliary}

We now verify the auxiliary estimates used in the proofs of the main theorems. Appendix~\ref{sec:normal-lemmas} treats Lemmas~\ref{stein:increments}--\ref{stein:concentration}, using the variance, covariance and moment bounds established in Appendix~\ref{sec:ferro}. Appendix~\ref{sec:tilting-lemmas} then proves Lemmas~\ref{md:path-input}--\ref{md:weighted} using the exact change-of-measure identities.

\subsection{Proofs of Lemmas~\ref{stein:increments}--\ref{stein:concentration}}\label{sec:normal-lemmas}

We use $Y=Y_{\bbeta}$, $p=p_{\bbeta}$, $\rho=\rho_{\bbeta}$, $\sigma=\sigma_{\bbeta}$, $v=v_H$ and $e=e_H$, and recall $f_H,R_H,K_H$ from \eqref{stein:fh}, \eqref{stein:rh} and \eqref{stein:kh}, respectively. In particular, $V=f_H(Y)$ and $h=C_1/n$. All constants are uniform over $\bbeta\in\B$. As in the auxiliary lemma statements, we first take $H$ without isolated vertices and large $n$. Sections~\ref{sec:residual-proof}--\ref{sec:normal-proof} explain how the conclusions extend to the other graphs, sample sizes and normalizations.

\begin{proof}[Proof of Lemma~\ref{stein:increments}]
The increment estimates follow by counting embeddings with one or two marked edges. We use \eqref{loc:producteq} to compute their means and the Poincar\'e inequality \eqref{eq:poincare} to bound their variances. Once these estimates are established, subtracting the mean increment and applying the same inequality will yield \eqref{stein:linear}. We begin with the exact rooted-count expression from \eqref{eq:count} and \eqref{loc:rootcount},
\begin{equation}
\Delta_if_H(y)=
\frac{|\Hom(H,G_y,i)|}{2ep^{e-1}n^{v-2}\sigma}.
\label{stein:root-expression}
\end{equation}
By \eqref{loc:rootcountbound} and \eqref{eq:compact}--\eqref{eq:compact-scales}, we obtain \eqref{stein:increment-size}. In \eqref{loc:double-root-count}, adjacent roots fix three vertices and disjoint roots fix four. Their counts are therefore $O(n^{v-3})$ and $O(n^{v-4})$, and division by the denominator in \eqref{stein:root-expression} proves \eqref{stein:second-increment}. Summing over the $2(n-2)$ adjacent and $\binom{n-2}{2}$ disjoint choices gives
\[
\begin{split}
\sum_{r=1}^N \|\Delta_{ir}f_H\|_\infty
&\le 2(n-2)Cn^{-2}+\binom{n-2}{2}Cn^{-3}\le Cn^{-1},\\
\sum_{r=1}^N \|\Delta_{ir}f_H\|_\infty^2
&\le 2(n-2)Cn^{-4}+\binom{n-2}{2}Cn^{-6}\le Cn^{-3},
\end{split}
\]
which proves \eqref{stein:increment-sums}.

For the increment mean, the number of placements after fixing an oriented source root is exactly $\prod_{\ell=2}^{v-1}(n-\ell)$. Each remaining monomial contains $e-1$ distinct edges. By \eqref{loc:producteq} and \eqref{loc:meanprecision}, its expectation is $p^{e-1}+O(n^{-1})$, uniformly over the embeddings. Summing and using \eqref{loc:fallingcomparison}, we obtain
\begin{equation}
\mathbb E|\Hom(H,G_Y,i)|
=2e\left\{\prod_{\ell=2}^{v-1}(n-\ell)\right\}p^{e-1}
+O(n^{v-3})
=2ep^{e-1}n^{v-2}+O(n^{v-3}).
\label{stein:root-mean}
\end{equation}
When $e=1$, the remaining product is empty, and for $H=K_2$ the rooted count is exactly two. Substituting \eqref{stein:root-mean} into \eqref{stein:root-expression} gives mean error $O((n\sigma)^{-1})=O(n^{-2})$.

Applying \eqref{eq:poincare} to $\Delta_if_H$ and using the squared row-sum estimate in \eqref{stein:increment-sums}, we have
\begin{equation}
\operatorname{Var}(\Delta_if_H(Y))
\le\kappa\sum_{r=1}^N \mathbb E|\Delta_{ri}f_H(Y)|^2\le Cn^{-3}.
\label{stein:root-variance}
\end{equation}
This completes the proof of \eqref{stein:increment-moments}.

To prove \eqref{stein:linear}, let $c_i=\E\Delta_if_H(Y)$ so that subtracting the corresponding linear function centers each coordinate increment. Vertex permutations preserve the Gibbs measure and the count and act transitively on the edges, so $c_i$ is independent of $i$. The configuration function
\[
y\longmapsto f_H(y)-\sum_{i=1}^N c_i(y_i-\widetilde p)
\]
is centered and has $i$th difference $\Delta_if_H-c_i$. By \eqref{eq:poincare} and \eqref{stein:root-variance},
\begin{align}
\mathbb E\left|f_H(Y)-\sum_{i=1}^N c_i(Y_i-\widetilde p)\right|^2
\le\kappa\sum_{i=1}^N \mathbb E|\Delta_if_H(Y)-c_i|^2
=\kappa\sum_{i=1}^N \operatorname{Var}(\Delta_if_H(Y))
\le Cn^{-1}.\label{stein:linear-projection}
\end{align}
Recall $S=E-\mu$ from \eqref{resp:Rdefinition}. By \eqref{loc:edgevariance}, $\|S(Y)\|_2\le Cn$. Vertex symmetry and \eqref{stein:increment-moments} make $c_i-\sigma^{-1}$ a common $O(n^{-2})$ constant. Using \eqref{eq:edge-scale} and \eqref{stein:linear-projection}, we obtain
\[
\begin{aligned}
\|f_H(Y)-W_{\bbeta}\|_2
&\le\left\|f_H(Y)-\sum_{i=1}^N c_i(Y_i-\widetilde p)\right\|_2
+\left\|\sum_{i=1}^N (c_i-\sigma^{-1})(Y_i-\widetilde p)\right\|_2\\
&\le Cn^{-1/2}+Cn^{-2}\|S(Y)\|_2\le Cn^{-1/2}.
\end{aligned}
\]
This proves the first part of \eqref{stein:linear}. For the second part, centering, \eqref{eq:poincare} and \eqref{stein:increment-size} give
\[
\mathbb EV^2\le\kappa\sum_{i=1}^N \mathbb E|\Delta_if_H(Y)|^2
\le C Nn^{-2}\le C.
\]
\end{proof}

\begin{proof}[Proof of Lemma~\ref{stein:remainder}]
We begin with $R_H$, whose relation to the edge-count remainder allows us to use the estimates from Appendix~\ref{sec:local-responses}. We then estimate $K_H$ and combine the two comparisons to control the weighted sum in \eqref{stein:rh-weighted}. Recall from \eqref{resp:Rdefinition} that $S=E-\mu$ and $R=R_{K_2}$. By \eqref{stein:rh},
\[
R_H=R-\rho\sigma(f_H-S/\sigma),
\]
and therefore
\begin{equation}
\Delta_iR_H=\Delta_iR-\rho\sigma(\Delta_if_H-\sigma^{-1}).
\label{stein:rh-difference}
\end{equation}
Applying \eqref{resp:meanbounds}, \eqref{stein:increment-moments} and \eqref{eq:compact}--\eqref{eq:compact-scales} to \eqref{stein:rh-difference}, we obtain
\[
\begin{split}
|\mathbb E\Delta_iR_H(Y)|
&\le Cn^{-1}+C\sigma n^{-2}\le Cn^{-1},\\
\sqrt{\operatorname{Var}(\Delta_iR_H(Y))}
&\le\sqrt{\operatorname{Var}(\Delta_iR(Y))}
+\rho\sigma\sqrt{\operatorname{Var}(\Delta_if_H(Y))}
\le Cn^{-1/2}.
\end{split}
\]
The variance bound follows from the $L^2$ triangle inequality. The supremum estimate follows from \eqref{stein:rh-difference}, \eqref{resp:meanbounds} and \eqref{stein:increment-size}. Thus
\begin{equation}
|\mathbb E\Delta_iR_H(Y)|\le Cn^{-1},\quad
\operatorname{Var}(\Delta_iR_H(Y))\le Cn^{-1},\quad
\|\Delta_iR_H\|_\infty\le C.
\label{stein:rh-increment}
\end{equation}
By \eqref{eq:conditional}, \eqref{stein:rh} and $\E V=0$, we have $\E R_H=0$. Applying \eqref{stein:rh-increment} in \eqref{eq:poincare} gives
\[
\begin{aligned}
\E R_H(Y)^2&\le\kappa\sum_{i=1}^N \E|\Delta_iR_H(Y)|^2
=\kappa\sum_{i=1}^N\{\Var(\Delta_iR_H(Y))+|\E\Delta_iR_H(Y)|^2\}
\\&\le CN(n^{-1}+n^{-2})\le Cn.
\end{aligned}
\]
Moreover, $\E|\Delta_iR_H(Y)|\le\|\Delta_iR_H(Y)\|_2\le Cn^{-1/2}$. Summing proves \eqref{stein:rh-moments}.

We next turn to \eqref{stein:kh-moments}. The definition \eqref{stein:kh} expresses $K_H$ as a sum of products of $q_i(1-q_i)$ and $\Delta_if_H$. We first compute its mean, keeping the covariance between these two factors. Combining \eqref{loc:bernoulli}, \eqref{loc:meanprecision} and \eqref{resp:qvariance}, we obtain
\begin{equation}\label{stein:q-mean}
\E[q_i(Y)(1-q_i(Y))]
=\widetilde p(1-\widetilde p)-\Var(q_i(Y))
=p(1-p)+O(n^{-1}).
\end{equation}
The covariance of $q_i(Y)(1-q_i(Y))$ and $\Delta_if_H(Y)$ is at most $C(n^{-1}n^{-3})^{1/2}=Cn^{-2}$ by Cauchy--Schwarz, \eqref{resp:weightvariance} and \eqref{stein:increment-moments}. Splitting the expectation into a covariance and a product of means, then using \eqref{stein:q-mean} and \eqref{stein:increment-moments}, yields
\begin{align}
\mathbb E[q_i(Y)(1-q_i(Y))\Delta_if_H(Y)]
&=\mathbb E[q_i(Y)(1-q_i(Y))]\mathbb E\Delta_if_H(Y)+O(n^{-2})\notag\\
&=\{p(1-p)+O(n^{-1})\}\{\sigma^{-1}+O(n^{-2})\}
+O(n^{-2})\notag\\
&=\frac{p(1-p)}\sigma+O(n^{-2}).\label{stein:kh-mean}
\end{align}
Summing \eqref{stein:kh-mean} over $i$, dividing by $\rho\sigma$ as in \eqref{stein:kh}, and using \eqref{eq:edge-scale} yields
\[
\mathbb EK_H(Y)=\frac{Np(1-p)}{\rho\sigma^2}
+O\left(\frac{N}{\sigma n^2}\right)=1+O(n^{-1}).
\]

To control $\Var(K_H(Y))$, apply \eqref{resp:productrule} to each product in \eqref{stein:kh}. Since $u(1-u)$ is $1$-Lipschitz on $[0,1]$ and $q_i(1-q_i)\le1/4$, combining \eqref{stein:increment-size}, \eqref{stein:increment-sums} and \eqref{resp:rowsum}--\eqref{resp:matrixbounds}, we obtain
\begin{equation}
\|\Delta_rK_H\|_\infty
\le\frac1{\rho\sigma}\left\{
\frac Cn\sum_{i=1}^N \|\Delta_rq_i\|_\infty+
\frac14\sum_{i=1}^N \|\Delta_{ri}f_H\|_\infty\right\}
\le Cn^{-2}.
\label{stein:kh-difference}
\end{equation}
The $i=r$ term vanishes because both $q_i$ and $\Delta_if_H$ are independent of coordinate $i$. Applying \eqref{eq:poincare} with \eqref{stein:kh-difference}, we have
\[
\operatorname{Var}(K_H(Y))\le\kappa\sum_{r=1}^N \|\Delta_rK_H\|_\infty^2
\le C Nn^{-4}\le Cn^{-2}.
\]
For the pointwise part of \eqref{stein:kh-moments}, combine \eqref{stein:kh}, \eqref{stein:increment-size} and \eqref{eq:compact}--\eqref{eq:compact-scales} to obtain
\[
0\le K_H(y)\le\frac{N}{4\rho\sigma}\max_i\|\Delta_if_H\|_\infty\le C.
\]
This proves \eqref{stein:kh-moments}.

We have now controlled both $R_H$ and $K_H$. To prove the remaining estimate \eqref{stein:rh-weighted}, we return to the difference identity \eqref{stein:rh-difference} and sum it with weights $q_i(1-q_i)$. The definitions of $K_H$ and $K=K_{K_2}$ in \eqref{stein:kh} and \eqref{resp:Kdefinition} allow us to keep this sum intact, giving
\begin{align}
\sum_{i=1}^N q_i(1-q_i)\Delta_iR_H
&=\sum_{i=1}^N q_i(1-q_i)\Delta_iR+\rho^2\sigma^2(K-K_H).
\label{stein:weighted-identity}
\end{align}
Combining \eqref{stein:weighted-identity} with \eqref{resp:weightedbound}, \eqref{resp:Kbounds}, \eqref{stein:kh-moments} and \eqref{eq:compact}--\eqref{eq:compact-scales}, we obtain
\[
\begin{split}
\mathbb E\left|\sum_{i=1}^N q_i(Y)(1-q_i(Y))\Delta_iR_H(Y)\right|
&\le Cn+\rho^2\sigma^2\{\|K(Y)-1\|_2+\|K_H(Y)-1\|_2\}\\
&\le Cn+C\sigma^2/n\le Cn.
\end{split}
\]
This proves \eqref{stein:rh-weighted}, which will control the sum in \eqref{stein:second-identity}.
\end{proof}

The preceding lemma controls the second moment of $R_H$ and the entire weighted sum in \eqref{stein:rh-weighted}. We now use these bounds to estimate $\E[R_H(Y)f(V)]$ more precisely than its absolute-moment bound permits. The argument follows the repeated use of Stein equations by Fang and Liu \cite[Section 2.2]{FL2025}.

\begin{proof}[Proof of Lemma~\ref{stein:pairing}]
Let $G$ solve the normal Stein equation for $f$. Multiplying the equation by $R_H(Y)$ and taking expectations removes the constant term, since $R_H(Y)$ is centered. We can then use \eqref{stein:target-identity} to express the remaining expectation through coordinate differences of $R_H(G\circ f_H)$, which are controlled by Lemmas~\ref{stein:increments} and \ref{stein:remainder}. More explicitly, the equation is
\begin{equation}
xG(x)-G'(x)=f(x)-\mathbb Ef(Z).
\label{stein:auxiliary-equation}
\end{equation}
By \eqref{stein:smooth-bounds},
\begin{equation}
\|G\|_\infty\le2,\qquad\|G'\|_\infty\le2,\qquad
\operatorname{Lip}(G')\le2.
\label{stein:auxiliary-bounds}
\end{equation}
Since $\E R_H(Y)=0$ by \eqref{eq:conditional} and \eqref{stein:rh}, we multiply \eqref{stein:auxiliary-equation} by $R_H(Y)$ and take expectations to obtain
\begin{equation}
\mathbb E[R_H(Y)f(V)]
=\mathbb E[R_H(Y)VG(V)]-\mathbb E[R_H(Y)G'(V)].
\label{stein:pairing-centering}
\end{equation}
Thus adding a constant to $f$ does not change this expectation.

Using \eqref{stein:increment-size}, the endpoints \eqref{stein:forced-endpoints} and absolute continuity of $G$, we write
\begin{align}
\Delta_i(G\circ f_H)(y)&=\Delta_if_H(y)G'(f_H(y))+e_i(y),\label{stein:taylor}\\
e_i(y)&=\int_{-y_i\Delta_if_H(y)}^{(1-y_i)\Delta_if_H(y)}
\{G'(f_H(y)+u)-G'(f_H(y))\}\,du.\notag
\end{align}
Since $y_i\in\{0,1\}$, the integral runs over $[0,\Delta_if_H(y)]$ or $[-\Delta_if_H(y),0]$. Applying \eqref{stein:auxiliary-bounds} to this integral in \eqref{stein:taylor}, we obtain
\begin{equation}
|e_i(y)|\le 2\int_0^{\Delta_if_H(y)}u\,du
=\{\Delta_if_H(y)\}^2\le Cn^{-2}.
\label{stein:taylor-error}
\end{equation}

To apply \eqref{stein:target-identity} to $A=R_H(G\circ f_H)$, we also need its coordinate difference. By the product rule \eqref{resp:productrule}, we have
\begin{align}
\Delta_i\{R_H(G\circ f_H)\}(y)
&=\Delta_iR_H(y)G(f_H(y))\notag\\
&\quad+\{R_H(y)+(1-2y_i)\Delta_iR_H(y)\}
\Delta_i(G\circ f_H)(y).
\label{stein:product-rule}
\end{align}
This is \eqref{resp:productrule}, written relative to the current value for $y_i=0$ or $1$. The quantity in braces is $R_H$ at the opposite coordinate endpoint.

Take $A=R_H(G\circ f_H)$ in \eqref{stein:target-identity}. Combining \eqref{stein:product-rule} and \eqref{stein:taylor}, and using \eqref{stein:kh} to sum the terms containing $R_H\Delta_if_HG'$, we obtain
\begin{equation}
\begin{aligned}
\rho\sigma\mathbb E[R_H(Y)VG(V)]+\mathbb E[R_H(Y)^2G(V)]
&=\sum_{i=1}^N \mathbb E[q_i(Y)(1-q_i(Y))\Delta_iR_H(Y)G(V)]\\
&\quad+\rho\sigma\mathbb E[R_H(Y)K_H(Y)G'(V)]
+\rho\sigma\mathcal E_{H,f},
\end{aligned}
\label{stein:second-intermediate}
\end{equation}
where the remaining terms are
\begin{align}
\mathcal E_{H,f}
&=\frac1{\rho\sigma}\sum_{i=1}^N \mathbb E[
q_i(Y)(1-q_i(Y))(1-2Y_i)\Delta_iR_H(Y)\Delta_if_H(Y)G'(V)]\notag\\
&\quad+\frac1{\rho\sigma}\sum_{i=1}^N \mathbb E[
q_i(Y)(1-q_i(Y))\{R_H(Y)+(1-2Y_i)\Delta_iR_H(Y)\}e_i(Y)].
\label{stein:pairing-error}
\end{align}
Divide \eqref{stein:second-intermediate} by $\rho\sigma$ and substitute into \eqref{stein:pairing-centering}. The expectation containing $R_HK_HG'(V)$ combines with $-\E[R_HG'(V)]$, leaving the factor $K_H-1$ controlled by \eqref{stein:kh-moments}. Thus
\begin{align}
\mathbb E[R_H(Y)f(V)]
&=\frac1{\rho\sigma}\mathbb E\left[
\left\{\sum_{i=1}^N q_i(Y)(1-q_i(Y))\Delta_iR_H(Y)-R_H(Y)^2\right\}G(V)\right]
\notag\\
&\quad+\mathbb E[R_H(Y)(K_H(Y)-1)G'(V)]+\mathcal E_{H,f}.
\label{stein:second-identity}
\end{align}
To estimate \eqref{stein:second-identity}, first control the error $\mathcal E_{H,f}$ from the discrete product rule and Taylor expansion. For the two sums in \eqref{stein:pairing-error}, we use $q_i(1-q_i)\le1/4$, \eqref{stein:auxiliary-bounds}, \eqref{stein:increment-size} and \eqref{stein:taylor-error}. Combining these with $\E|R_H|\le C\sqrt n$ and $\sum_{i=1}^N\E|\Delta_iR_H|\le Cn^{3/2}$ from \eqref{stein:rh-moments}, and with $\rho\sigma\asymp n$ from \eqref{eq:compact}--\eqref{eq:compact-scales}, we obtain
\begin{equation}
\begin{aligned}
|\mathcal E_{H,f}|
\le\frac C{\sigma n}\sum_{i=1}^N \mathbb E|\Delta_iR_H(Y)|
+\frac C{\sigma n^2}\left\{N\mathbb E|R_H(Y)|+
\sum_{i=1}^N \mathbb E|\Delta_iR_H(Y)|\right\}
\le Cn^{-1/2}.
\end{aligned}\label{stein:pairing-error-bound}
\end{equation}
Finally, applying \eqref{stein:auxiliary-bounds}, Cauchy--Schwarz and \eqref{stein:pairing-error-bound} to \eqref{stein:second-identity} gives
\[
\begin{aligned}
|\E[R_H(Y)f(V)]|
&\le\frac{C}{\rho\sigma}
\left\{\E\left|\sum_{i=1}^N q_i(Y)(1-q_i(Y))\Delta_iR_H(Y)\right|
+\E R_H(Y)^2\right\}\\
&\quad+2\|R_H(Y)\|_2\|K_H(Y)-1\|_2+Cn^{-1/2}\\
&\le\frac{C}{n}(n+n)+C\sqrt n\,n^{-1}+Cn^{-1/2}\le C,
\end{aligned}
\]
where the last line uses \eqref{stein:rh-moments}--\eqref{stein:rh-weighted} and \eqref{eq:compact}--\eqref{eq:compact-scales}. This proves \eqref{stein:pairing-bound}, uniformly in the $1$-Lipschitz function $f$.
\end{proof}

We have proved the expectation estimate for the remainder. To complete the auxiliary bounds used for normal approximation, it remains to control configurations near an indicator threshold. We do so directly from the conditional identity and the nonnegative increments of $f_H$.

\begin{proof}[Proof of Lemma~\ref{stein:concentration}]
To estimate the contribution near the threshold in $I_1(F)$ of \eqref{stein:kol-decomposition}, we apply \eqref{stein:bernoulli} to a truncated linear function of $V$. We first bound the second moment of the sum $\sum_{i=1}^N\{Y_i-q_i(Y)\}$ that appears on the left of the summed identity. Using $\sum_{j=1}^N\{y_j-q_j(y)\}$ as the configuration function in \eqref{stein:bernoulli} and summing over $i$, we obtain
\begin{equation}\label{stein:conditional-sum-moment}
\begin{aligned}
\E\left[\left(\sum_{j=1}^N\{Y_j-q_j(Y)\}\right)^2\right]
&=\sum_{i=1}^N\E\left[q_i(Y)(1-q_i(Y))
\left\{1-\sum_{j=1}^N\Delta_iq_j(Y)\right\}\right]\\
&\le\frac14\sum_{i=1}^N
\left(1+\sum_{j=1}^N\|\Delta_iq_j\|_\infty\right)
\le CN.
\end{aligned}
\end{equation}
For the last inequality, combine \eqref{resp:rowsum}--\eqref{resp:matrixbounds} and the symmetry of $A_{ij}$. For $z\in\mathbb R$, define
\[
\ell_{z,h}(w)=\max\{-2h,\min\{w-z,2h\}\}.
\]
This function is increasing, $1$-Lipschitz and bounded in absolute value by $2h$. If $|f_H(y)-z|\le h$, then \eqref{stein:forced-endpoints} and \eqref{stein:increment-size} place both coordinate endpoints in $[z-2h,z+2h]$, where $\ell_{z,h}(w)=w-z$. Thus $\Delta_i(\ell_{z,h}\circ f_H)(y)=\Delta_if_H(y)$. For all other configurations this difference is nonnegative. By \eqref{stein:kh}, we therefore have
\[
\sum_{i=1}^Nq_i(y)(1-q_i(y))\Delta_i(\ell_{z,h}\circ f_H)(y)
\ge\rho\sigma K_H(y)\mathbf1_{\{|f_H(y)-z|\le h\}}.
\]
Taking $A=\ell_{z,h}\circ f_H$ in \eqref{stein:bernoulli} and summing over $i$, we now obtain
\[
\begin{aligned}
\E[K_H(Y)\mathbf1_{\{|V-z|\le h\}}]
&\le\frac1{\rho\sigma}\E\left[\ell_{z,h}(V)
\sum_{i=1}^N\{Y_i-q_i(Y)\}\right]\\
&\le\frac{2h}{\rho\sigma}
\left\{\E\left[\left(\sum_{i=1}^N\{Y_i-q_i(Y)\}\right)^2\right]\right\}^{1/2}
\le\frac{Ch\sqrt N}{\rho\sigma}\le Cn^{-1}.
\end{aligned}
\]
Here we used Cauchy--Schwarz, \eqref{stein:conditional-sum-moment}, $h=C_1/n$ and \eqref{eq:compact}--\eqref{eq:compact-scales}. This proves \eqref{stein:local-concentration}.
\end{proof}

\subsection{Proofs of Lemmas~\ref{md:path-input}--\ref{md:weighted}}\label{sec:tilting-lemmas}

With Lemmas~\ref{stein:increments}--\ref{stein:concentration} proved, the normal approximation and variance estimates of Theorems~\ref{thm:normal} and \ref{thm:signed} are now available for the tilt arguments. Recall from \eqref{md:path}--\eqref{md:Yt} that $U_t$ uses the current mean and scale, whereas $Y_t$ uses the base scale and a quadratic approximation to the tilted mean.

\begin{proof}[Proof of Lemma~\ref{md:path-input}]
We first justify the root and expansion in \eqref{md:center-root}, then verify the bounds along the full tilt interval. Let $m_3(t)=t+\gamma_{H,0}t^2/(2\sqrt N)$. Since $\gamma_{H,0}$ is uniformly bounded, for large $n$ we have
\[
|m_3'(t)-1|\le C_Ln^{-1/2}\le\tfrac12
\quad (|t|\le2L\sqrt n).
\]
Moreover, $m_3(-2L\sqrt n)<-L\sqrt n$ and $m_3(2L\sqrt n)>L\sqrt n$. Thus $m_3$ is strictly increasing on this interval and its image contains $[-L\sqrt n,L\sqrt n]$. By the intermediate value theorem, there is a unique solution $t_\epsilon$ there. Solving the quadratic and rationalizing its numerator, we obtain \eqref{md:center-root}. The radicand is uniformly bounded away from zero, so $\sgn(t_\epsilon)=\epsilon$ and $|t_\epsilon|\asymp x$, with $|t_\epsilon|\le2L\sqrt n$.

For the expansion in \eqref{md:center-root}, let $h=2\epsilon\gamma_{H,0}x/\sqrt N$. Then
\[
|h|\le C_Ln^{-1/2},\qquad
\frac{2}{1+\sqrt{1+h}}=1-\frac h4+O(h^2).
\]
Multiplying by $\epsilon x$ proves the expansion uniformly in the base parameter. When $\gamma_{H,0}=0$, the formula reduces to $t_\epsilon=\epsilon x$.

We next verify the probability and scale bounds on the full interval \eqref{md:allowed-path}. Let $u(t)=tn^{v_H-2}/\sigma_{H,0}$ be the displacement in \eqref{md:path}. By \eqref{eq:subgraph-scale} and \eqref{eq:compact}--\eqref{eq:compact-scales}, $|u(t)|\le C_Ln^{-1/2}$. As in \eqref{info:signed-path}, an existing energy coordinate can be varied by adding a copy of its template.

For a compact base family $\B\subset\mathcal D$, let $\delta=1-\sup_{\bbeta\in\B}D(\bbeta)>0$. Definition~\ref{def:parameter-regions}(ii) gives
\begin{equation}\label{md:compact-existing}
D(\bbeta(t))\le D(\bbeta_0)+\tfrac12e(e-1)|u(t)|.
\end{equation}
By \eqref{md:compact-existing}, a fixed small interval of displacements lies in a compact subset of $\mathcal D$ for the enlarged model. Theorem~\ref{thm:normal} therefore applies uniformly along this interval in either direction.

For a compact base family in $\mathcal S_+$, apply Theorem~\ref{thm:signed} to the added graph $H$. Both signs lie in its fixed signed neighborhood for large $n$. More generally, if the base is $\btheta=(\bbeta,\bh)\in\B_{\mathcal J}(\eta)$, append $H$ to the added family. Then
\begin{equation}\label{md:compact-new}
\sum_{\ell=1}^m e_{J_\ell}^2|h_\ell|+e^2|u(t)|
\le\eta+e^2C_Ln^{-1/2}\le2\eta
\end{equation}
for all $n$ beyond a common threshold. The radius in Lemma~\ref{lem:uniform-input} is unchanged when $H$ is appended. Thus \eqref{md:compact-new} keeps both tilt directions in the outer family, where Theorem~\ref{thm:signed} applies uniformly.

In each case, let $\B_H$ be the compact family just constructed. By Theorem~\ref{thm:normal} or \ref{thm:signed}, as appropriate, we obtain the distance and relative variance estimates in Lemma~\ref{md:path-input}. Combining \eqref{md:scale-ratio} with \eqref{eq:compact}--\eqref{eq:compact-scales} on $\B_H$, we also have $c\le a_H(t)\le C$.
\end{proof}

\begin{proof}[Proof of Lemma~\ref{md:cgf}]
We express $M_H''(t)$ as the tilted variance divided by $\sigma_{H,0}^2$. To obtain the cubic term, we expand the explicit squared scale ratio $a_H(t)^2$ in \eqref{md:scale-ratio}, then use Lemma~\ref{md:path-input} to transfer the expansion to $M_H''(t)$. Integration yields \eqref{md:cgfexpansion}, without differentiating the variance approximation error.

All means and scales are as in \eqref{eq:subgraph-scale}. By \eqref{eq:model} and \eqref{md:path}, the energy increment is
\[
T_{\bbeta(t)}(y)-T_{\bbeta_0}(y)=t|\Hom(H,G_y)|/\sigma_{H,0}.
\]
Consequently,
\begin{equation}\label{md:partition-cgf}
M_H(t)=\log Z(\bbeta(t))-\log Z(\bbeta_0)
-t\mu_{H,0}/\sigma_{H,0}.
\end{equation}
The configuration space is finite. Differentiating termwise, for any fixed configuration function $A$, we have
\begin{equation}\label{md:covariance-derivative}
\frac{d}{dt}\mathbb E A(Y_{\bbeta(t)})
=\operatorname{Cov}\!\left(A(Y_{\bbeta(t)}),
f_{H,\bbeta_0}(Y_{\bbeta(t)})\right).
\end{equation}
Taking $A=f_{H,\bbeta_0}$ in \eqref{md:covariance-derivative} and using \eqref{md:partition-cgf}, we obtain the exact identities
\begin{equation}\label{md:exacttilt}
\begin{gathered}
\frac{p_{\bbeta(t)}(y)}{p_{\bbeta(0)}(y)}
=e^{t f_{H,\bbeta_0}(y)-M_H(t)},\qquad
M_H'(t)
=\frac{\mathbb E|\Hom(H,G_{Y_{\bbeta(t)}})|-\mu_{H,0}}
{\sigma_{H,0}},\\
M_H''(t)
=\frac{\operatorname{Var}(|\Hom(H,G_{Y_{\bbeta(t)}})|)}
{\sigma_{H,0}^2}.
\end{gathered}
\end{equation}
In particular, $M_H(0)=M_H'(0)=0$. The density ratio in \eqref{md:exacttilt} also shows that, for every configuration function $A$, we have
\begin{equation}\label{md:weighted-moment-transfer}
\frac{\E[A(Y_{\bbeta_0})e^{tW_{H,\bbeta_0}}]}
{\E e^{tW_{H,\bbeta_0}}}
=\E A(Y_{\bbeta(t)}).
\end{equation}
Thus the uniform moment bounds apply to the weighted expectations in \eqref{md:weighted-moment-transfer}. For fixed $t$, taking $A(y)=\{f_{H,\bbeta_0}(y)-M_H'(t)\}^2$ gives $M_H''(t)$.

We next compute the derivatives of $a_H(t)^2$ in \eqref{md:scale-ratio}. Recall from \eqref{md:path} that only coordinate $j$, corresponding to $H$, varies along the tilt. In the parameter derivatives below, let $p=p_{\bbeta}$, $\rho=\rho_{\bbeta}$ and $v=v_{\bbeta}$, with $v$ denoting the variance density rather than the number of vertices of $H$. Since the $j$th graph is $H$,
\begin{equation}\label{md:actualphi}
\partial_{\beta_j}\Phi_{\bbeta}(u)=eu^{e-1},\qquad
\rho=1-2p(1-p)\Phi_{\bbeta}'(p),\qquad
v=\frac{p(1-p)}{\rho},
\end{equation}
where $p=\Psi(2\Phi_{\bbeta}(p))$. The local fixed-point argument leading to \eqref{eq:compact}, applied to the compact family $\B_H$ from Lemma~\ref{md:path-input}, makes $p_{\bbeta}$ smooth with uniformly bounded derivatives of the required orders. Moreover, $p$, $1-p$ and $\rho$ are bounded away from zero. These bounds are uniform throughout the enlarged parameter family, including at zero values of the added coefficients.

Differentiate the fixed-point equation $p=\Psi(2\Phi_{\bbeta}(p))$ and the inverse-variance identity $v^{-1}=1/\{p(1-p)\}-2\Phi_{\bbeta}'(p)$ from \eqref{md:actualphi}. Since $\Psi'(z)=\Psi(z)(1-\Psi(z))$, we obtain
\begin{equation}\label{md:response-intermediate}
\begin{aligned}
\partial_{\beta_j}p
&=2p(1-p)\{ep^{e-1}+\Phi_{\bbeta}'(p)\partial_{\beta_j}p\},\\
\partial_{\beta_j}(v^{-1})
&=-\left\{\frac{1-2p}{p^2(1-p)^2}+2\Phi_{\bbeta}''(p)\right\}
\partial_{\beta_j}p-2e(e-1)p^{e-2}.
\end{aligned}
\end{equation}
Solve the first line of \eqref{md:response-intermediate} for $\partial_{\beta_j}p$ and use $\partial_{\beta_j}v=-v^2\partial_{\beta_j}(v^{-1})$ in the second. With \eqref{eq:gamma}, we obtain
\begin{equation}\label{md:response}
\begin{gathered}
\partial_{\beta_j}p=2ep^{e-1}v,\qquad
\partial_{\beta_j}v=2ep^{e-1}v^{3/2}\gamma_{\bbeta}+2e(e-1)p^{e-2}v^2,\\
\partial_{\beta_j}\log(p^{2e-2}v)
=2ep^{e-1}\sqrt v
\left\{\gamma_{\bbeta}+3(e-1)\frac{\sqrt v}{p}\right\}.
\end{gathered}
\end{equation}
For the last line of \eqref{md:response}, substitute the first two into
\[
\partial_{\beta_j}\log(p^{2e-2}v)
=(2e-2)\frac{\partial_{\beta_j}p}{p}+\frac{\partial_{\beta_j}v}{v}.
\]
The terms proportional to $e-1$ sum to $6e(e-1)p^{e-2}v$, which yields the factor $3(e-1)\sqrt v/p$ in \eqref{md:response}, also when $e=1$.

Using \eqref{md:response} and the path derivative from \eqref{md:path}, with the coefficient defined in \eqref{md:gamma}, we obtain
\begin{equation}\label{md:scalederivative}
(a_H^2)'(0)=\frac{\gamma_{H,0}}{\sqrt N},\qquad
\sup_{\substack{\epsilon\in\{-1,1\}\\0\le u\le2L\sqrt n}}
|(a_H^2)''(\epsilon u)|\le\frac C N.
\end{equation}
The first identity in \eqref{md:scalederivative} identifies the cubic coefficient through the first-order change of the squared scale ratio. For the second bound, the path \eqref{md:path} is affine and only coordinate $j$ varies. Differentiating \eqref{md:scale-ratio} twice gives
\begin{equation}\label{md:scale-second-derivative}
(a_H^2)''(t)=
\frac{\left.\partial_{\beta_j}^2(p_{\bbeta}^{2e-2}v_{\bbeta})\right|_{\bbeta=\bbeta(t)}}
{p_{\bbeta_0}^{2e-2}v_{\bbeta_0}}
\left(\frac{n^{v_H-2}}{\sigma_{H,0}}\right)^2.
\end{equation}
In \eqref{md:scale-second-derivative}, the parameter derivatives are uniformly bounded, the denominator is bounded away from zero, and the final square is $O(N^{-1})$ by \eqref{md:path}. This proves the second part of \eqref{md:scalederivative}. Differentiating \eqref{md:scale-ratio} once also gives
\begin{equation}\label{md:scale-first-derivative}
(a_H^2)'(t)=
\frac{\left.\partial_{\beta_j}(p_{\bbeta}^{2e-2}v_{\bbeta})
\right|_{\bbeta=\bbeta(t)}}
{p_{\bbeta_0}^{2e-2}v_{\bbeta_0}}
\frac{n^{v_H-2}}{\sigma_{H,0}}
=O(N^{-1/2})
\end{equation}
uniformly along the tilt paths. Using \eqref{md:scalederivative}, \eqref{md:scale-first-derivative} and the positive lower bound in Lemma~\ref{md:path-input}, differentiation of the square root yields
\begin{equation}\label{md:scale-square-root-second}
a_H''(t)
=\frac{(a_H^2)''(t)}{2a_H(t)}
-\frac{\{(a_H^2)'(t)\}^2}{4a_H(t)^3}
=O(N^{-1}).
\end{equation}
At $t=0$, we have $a_H(0)=1$ and, by \eqref{md:scalederivative}, $a_H'(0)=\gamma_{H,0}/(2\sqrt N)$. Taylor's theorem, with \eqref{md:scalederivative} and \eqref{md:scale-square-root-second}, now proves both expansions in \eqref{md:scaleexpansion}.

We now transfer the scale expansion to $M_H''$. Combining \eqref{md:exacttilt}, \eqref{md:scale-ratio} and the variance bound in Lemma~\ref{md:path-input}, we obtain
\begin{equation}\label{md:variance-input}
|M_H''(t)-a_H(t)^2|
=a_H(t)^2\left|\frac{\operatorname{Var}(|\Hom(H,G_{Y_{\bbeta(t)}})|)}
{\sigma_{H,\bbeta(t)}^2}-1\right|
\le C/n.
\end{equation}
Combining \eqref{md:variance-input} with \eqref{md:scaleexpansion} proves the first line of \eqref{md:cgfexpansion}. To integrate for either sign of $t$, put
\[
R_2(t)=M_H''(t)-1-\gamma_{H,0}t/\sqrt N,
\qquad |R_2(t)|\le C(n^{-1}+t^2/N).
\]
Since \eqref{md:exacttilt} gives $M_H'(0)=M_H(0)=0$, integration with the usual orientation for either sign of $t$ yields
\begin{equation}\label{md:cgf-integrals}
\begin{split}
M_H'(t)-t-\frac{\gamma_{H,0}t^2}{2\sqrt N}
=\int_0^t R_2(s)\,ds,\quad
M_H(t)-\frac{t^2}{2}-\frac{\gamma_{H,0}t^3}{6\sqrt N}
=\int_0^t(t-s)R_2(s)\,ds.
\end{split}
\end{equation}
Let $u=|t|$. By the first line of \eqref{md:cgfexpansion}, the two remainders in \eqref{md:cgf-integrals} are at most $C\int_0^u(n^{-1}+s^2/N)\,ds$ and $C\int_0^u(u-s)(n^{-1}+s^2/N)\,ds$, respectively. Evaluating the two integrals gives $C\{u/n+u^3/(3N)\}$ and $C\{u^2/(2n)+u^4/(12N)\}$, respectively, and proves the remaining lines of \eqref{md:cgfexpansion}.
\end{proof}

\begin{proof}[Proof of Lemma~\ref{md:tilted-normal}]
By \eqref{md:path}, \eqref{md:exacttilt} and \eqref{md:Yt}, we have
\begin{equation}\label{md:affine-tilt}
Y_t=a_H(t)U_t+\delta_t.
\end{equation}

The identity \eqref{md:affine-tilt} separates the approximation of $U_t$ from the change in its center and scale. Since $a_H(t)>0$, the event $\{Y_t\le y\}$ equals $\{U_t\le(y-\delta_t)/a_H(t)\}$. The first part of \eqref{md:tiltBEexact} therefore follows from Lemma~\ref{md:path-input}, and the second follows from the estimate of $M_H'$ in \eqref{md:cgfexpansion}.

To compare with the standard normal distribution, use the positive bounds on $a_H(t)$ in Lemma~\ref{md:path-input}. Translation is controlled by $\|\phi\|_\infty=(2\pi)^{-1/2}$. For a scale $s$ between $1$ and $a_H(t)$, differentiation gives $|\partial_s\Phi(y/s)|=|y/s|\phi(y/s)/s$, uniformly bounded because $s$ is bounded away from zero and $\sup_z|z|\phi(z)<\infty$. Thus
\begin{equation}\label{md:normalshifts}
\begin{aligned}
\sup_y\left|\Phi\!\left(\frac{y-\delta_t}{a_H(t)}\right)-\Phi(y)\right|
&\le\sup_y\left|\Phi\!\left(\frac{y-\delta_t}{a_H(t)}\right)
-\Phi\!\left(\frac y{a_H(t)}\right)\right|+\sup_y\left|\Phi\!\left(\frac y{a_H(t)}\right)-\Phi(y)\right|\\
&
\le C\{|\delta_t|+|a_H(t)-1|\}.
\end{aligned}
\end{equation}
Combining \eqref{md:tiltBEexact}, \eqref{md:normalshifts} and \eqref{md:scaleexpansion} proves the first bound in \eqref{md:tiltBE}.

For the last inequality in \eqref{md:tiltBE}, use $N\ge n^2/4$ from \eqref{eq:edges}, so $|t|/\sqrt N\le2|t|/n$. If $|t|\le1$, then $t^2/N\le C/n$, whereas if $|t|\ge1$, then $t^2\le|t|^3$. This proves the last inequality in \eqref{md:tiltBE}.
\end{proof}

\begin{proof}[Proof of Lemma~\ref{md:weighted}]
We first prove the bounds on $Q$ by using its integral representation in \eqref{md:Q}. For $0\le u\le(1+z)^{-1}$, we have $u\le1$ and $zu\le1$, so $e^{-zu}\phi(u)\ge e^{-1}\phi(1)$. Integration over this interval gives the lower bound in \eqref{md:Qbounds}. For $z\ge1$,
\[
Q(z)\le\phi(0)\int_0^\infty e^{-zu}\,du=\phi(0)/z,
\quad
|Q'(z)|\le\phi(0)\int_0^\infty ue^{-zu}\,du=\phi(0)/z^2.
\]
Differentiation under the integral is justified by the integrable bound $u\phi(u)$. For $0\le z\le1$, use $Q(z)\le1/2$ and $|Q'(z)|\le\int_0^\infty u\phi(u)\,du$. This proves \eqref{md:Qbounds} with common constants for all $z\ge0$.

To prove \eqref{md:weightedbound}, we write the weighted expectation as an integral of strict tail probabilities and then apply the Kolmogorov bound. If $z>0$, the identity $1-e^{-zu}=\int_0^u ze^{-zy}\,dy$ gives
\[
e^{-zu}\mathbf1_{\{u>0\}}
=\mathbf1_{\{u>0\}}-z\int_0^\infty e^{-zy}\mathbf1_{\{u>y\}}\,dy.
\]
Both sides vanish for $u\le0$. The integral has a nonnegative integrand, so Tonelli's theorem yields
\begin{equation}\label{md:strict-integral}
\mathbb E[e^{-zU}\mathbf1_{\{U>0\}}]
=\mathbb P(U>0)-z\int_0^\infty e^{-zy}\mathbb P(U>y)\,dy.
\end{equation}
By \eqref{eq:distances}, $d_{\rm Kol}(U,Z)\le d$ implies $|\Pp(U>y)-\Pp(Z>y)|\le d$ for every real $y$. Apply \eqref{md:strict-integral} to $U$ and $Z$. The probability terms on its right-hand side differ by at most $d$, and the weighted integrals differ by at most $dz\int_0^\infty e^{-zy}\,dy=d$. The normal weighted expectation equals $Q(z)$ by \eqref{md:Q}, proving \eqref{md:weightedbound}. For $z=0$, compare $\Pp(U>0)$ directly with $1/2$.

Finally, if $F_U$ is the distribution function of $U$, then $F_{-U}(y)=1-F_U((-y)-)$. Letting $w\uparrow-y$ in $|F_U(w)-\Phi(w)|\le d$ and using continuity and symmetry of $\Phi$ proves the reflection bound. The use of left limits retains all possible atoms.
\end{proof}

\section*{Acknowledgments}
\textbf{Use of generative AI.} During the preparation of this manuscript, the authors used ChatGPT for language editing, LaTeX assistance, and preliminary checks of bibliographic information. The mathematical results and arguments are the authors' own. The authors reviewed, edited, and independently verified all content and references and take full responsibility for the manuscript.

\section*{Funding}
S.-H. Liu was supported in part by the Fundamental Research Funds for the Central Universities (No. DUT25RC(3)133).

Dawei Lu was supported by the National Natural Science Foundation of China (No. 12371154) and the National Key R\&D Program of China (No. 2023YFA1009200).

\end{document}